\newcommand{\CR}{\overline{\partial}}
\newcommand{\N}{\mathbb{N}}
\newcommand{\Z}{\mathbb{Z}}
\newcommand{\C}{\mathbb{C}}
\newcommand{\R}{\mathbb{R}}
\newcommand{\T}{\mathbb{T}}
\newcommand{\Hil}{\mathbb{H}}
\newcommand{\B}{\mathcal{B}}
\newcommand{\E}{\mathcal{E}}
\newcommand{\Id}{\mathds{1}}
\newcommand{\Span}{\mathrm{Span}}
\newcommand{\Diag}{\mathrm{diag}}
\newcommand{\lhk}{\left(}
\newcommand{\rhk}{\right)}
\newcommand{\opp}{\mathcal{O}}
\newcommand{\M}{\mathcal{M}}
\renewcommand{\P}{\mathcal{P}}
\newcommand{\ind}{\mathrm{ind}}
\newcommand{\coker}{\mathrm{coker}}
\newcommand{\prt}{\mathrm{part}}
\newcommand{\inter}{\mathrm{inter}}
\newcommand{\op}[1]{\left\| #1 \right\|}
\newcommand{\opnorm}{\@ifstar\@opnorms\@opnorm}
\newcommand{\@opnorms}[1]{%
  \left|\mkern-1.5mu\left|\mkern-1.5mu\left|
   #1
  \right|\mkern-1.5mu\right|\mkern-1.5mu\right|
}
\newcommand{\@opnorm}[2][]{%
  \mathopen{#1|\mkern-1.5mu#1|\mkern-1.5mu#1|}
  #2
  \mathclose{#1|\mkern-1.5mu#1|\mkern-1.5mu#1|}
}
\pgfplotsset{compat=1.10}
\newcommand{\wt}[1]{\widetilde{#1}}
\def\[#1\]{\begin{align*}#1 \end{align*}}
\newcommand{\vierkanttwee}[4]{\lhk \begin{matrix} #1&#2\cr#3&#4\end{matrix}\rhk}
\DeclarePairedDelimiter\floor{\lfloor}{\rfloor}
\newcommand{\pa}[1]{\left( #1 \right)}
\newtheorem{thm}{Theorem}[section]
\newtheorem{definition}[thm]{Definition}
\newtheorem{theorem}[thm]{Theorem}
\newtheorem{proposition}[thm]{Proposition}
\newtheorem{corollary}[thm]{Corollary}
\newtheorem{lemma}[thm]{Lemma}
\newtheorem{conjecture}[thm]{Conjecture}
\newtheorem{example}[thm]{Example}
\title{Floer homology for Hamiltonian PDEs: Fredholm theory}
\author{Oliver Fabert \\ Niek Lamoree}
\date{ }
\begin{document}
\maketitle

\tikzset{decorate sep/.style 2 args=
{decorate,decoration={shape backgrounds,shape=circle,shape size=#1,shape sep=#2}}}

\abstract{By coupling a Hamiltonian mechanical system with a linear Hamiltonian field theory one obtains an infinite-dimensional Hamiltonian system with regularizing nonlinearity, where the underlying phase space is given by the product of a finite-dimensional symplectic manifold with an infinite-dimensional linear symplectic Hilbert space. After our compactness results in \cite{FL}, \cite{FL2} we continue our program for defining a Floer homology theory for this class of infinite-dimensional Hamiltonian systems. Based on the presence of small divisors we introduce a new notion of nondegeneracy for time-periodic solutions which allows us to prove that the linearization of the nonlinear Floer operator is Fredholm when viewed as a map between suitable Sobolev space completions. 

\tableofcontents

\section{Motivation and summary}\label{motivation-section}

Classical physics consists of two parts, classical mechanics and classical field theory. While in classical mechanics one studies the dynamics of particles, in classical field theory one studies the dynamics of fields. In both cases it is well-known that problems can be studied as Hamiltonian systems on appropriate phase spaces.\par
The phase spaces of classical mechanics are finite-dimensional symplectic manifolds $M=(M,\omega_M)$, where the symplectic form $\omega_M$ is a closed, nondegenerate two-form. Since $\omega_M$ provides a one-to-one correspondence between vector fields and one-forms on $M$, it assigns a time-dependent (Hamiltonian) vector field $X^H_t$ on $M$ to each time-dependent (Hamiltonian) function $H_t:M\to\R$ which represents the energy of the underlying mechanical system. Assuming that the Hamiltonian function $H_t$ is time-periodic with period $T>0$, $H_{t+T}=H_t$, one is interested in the set $\P(H)$ of $T$-periodic orbits of the time-periodic Hamiltonian vector field $X^H_{t+T}=X^H_t$. In his famous paper \cite{Gr} M. Gromov introduced moduli spaces of maps from a Riemann surface to the symplectic manifold $M$ equipped with an $\omega_M$-compatible almost complex structure $J_M:TM\to TM$, $J_M^2=-1$ satisfying the Cauchy-Riemann equation with a zeroth order Hamiltonian term, now called Floer curves. In the case when $M$ is closed and $\pi_2(M)=\{0\}$, he used the fact that the perturbed Cauchy-Riemann operator is nonlinear Fredholm as well as compactness results to prove that $\P(H)$ is non-empty. Building on Gromov's work, A. Floer extended the study of the above moduli spaces of maps satisfying a perturbed Cauchy-Riemann equation to develop the tool of Floer homology in \cite{Fl}. He used it to prove the homological Arnold conjecture, which in the case of $\Z_2$-coefficients states that number of $T$-periodic orbits is bounded from below by the sum of the $\Z_2$-Betti numbers of the singular homology of $M$, $\#P(H)\geq\dim_{\Z_2} H_*(M,\Z_2)$, provided that all elements in $\P(H)$ are nondegenerate in the sense that the linearized return map has no eigenvalue equal to one. On the other hand, the second important case are cotangent bundles, $(M,\omega_M)=(T^*Q,d\lambda_M)$, where $Q$ is a closed manifold and $\lambda_M$ denotes the Liouville one-form on $T^*Q$. Note that a choice of Riemannian metric on $Q$ leads to a natural choice of Riemannian metric $\langle\cdot,\cdot\rangle_M$ on $T^*Q$, defined using the Levi-Civita connection on $T^*M\cong TM$, with corresponding $\omega_M$-compatible almost complex structure $J_M$. By adapting Floer's proof, in particular, establishing $C^0$-bounds for Floer curves, it was shown e.g. in \cite{AS} that $\#\P(H)\geq \dim_{\Z_2} H_*(\Lambda Q,\Z_2)$ with $\Lambda Q=C^0(S^1,Q)$ denoting the based loop space of $Q$, provided not only that all orbits in $\P(H)$ are nondegenerate but also that $H$ is asymptotically quadratic with respect to the momentum coordinates $p$ in the sense that 
\begin{itemize}
\item $dH_t(q,p)\cdot p\frac{\partial}{\partial p} - H_t(q,p)\geq c_0 |p|^2 - c_1$, for some constants $c_0>0$ and $c_1\geq 0$,
\item $|\nabla_q H_t(q,p)|\leq c_2 (1+|p|^2),\,\,|\nabla_p H_t(q,p)|\leq c_2 (1+|p|)$ for some constant $c_2\geq 0$.
\end{itemize}
As a simple example of a problem studied in classical mechanics, let us consider the dynamics of a charged particle with locus $q(t)$ constrained to a submanifold $Q\subset\R^N$ and shape function $\rho$ , driven by a force field with underlying time-dependent scalar potential field $\varphi(t,x)$ on $\R^N$ 
\[ \nabla_t^2 q(t) = -\nabla(\varphi*\rho)(t,q(t)),\] where $\nabla$ denotes the gradient for functions on $Q\subset\R^N$ as well as the Levi-Civita connection on the tangent bundle $TQ$ of $Q$ with respect to the induced canonical Riemannian metric, and $*$ denotes convolution with respect to the space coordinates in $\R^N$. Assuming that the field is time-periodic with period $T$, $\varphi(t+T,x)=\varphi(t,x)$, one is interested in lower bounds for the number of $T$-periodic solutions $q(t+T)=q(t)$. It is well-known that this problem can be formulated as a problem about $T$-periodic solutions $u=(q,p)$ of a Hamilton's equations for the Hamiltonian function $$H_t: T^*Q\to\R,\,\, H_t(q,p)=\frac{1}{2}|p|^2+(\varphi*\rho)(t,q).$$\par 

In the opposite direction, in classical field theory one can study the dynamics of the scalar potential field $\varphi(t,x)$ driven by the known dynamics of the particle with locus $q(t)$ and shape function $\rho$, 
\[ \partial_t^2 \varphi(t,x) = \Delta\varphi(t,x)-\varphi(t,x)-\rho(x-q(t)),\] where $\Delta=\partial_{x_1}^2+\ldots+\partial_{x_N}^2$ denotes the Laplacian for functions on $\R^N$. Here and below we will only be interested in solutions which are $2\pi$-periodic with respect to the space coordinates, so that for every $t\in\R$ we may view $\varphi(t,\cdot)$ as a map from the torus $\T^N=\R^N/(2\pi\Z)^N$ to $\R$, and further view $Q$ as a submanifold of $\T^N$. Assuming now that the motion of the particle is time-periodic, $q(t+T)=q(t)$, it clearly makes sense to ask for solutions for the field which are $T$-periodic with respect to time and $2\pi$-periodic with respect to space, $\varphi(t+T,x)=\varphi(t,x)=\varphi(t,x+2\pi)$.\par Assuming that neither the time-dependent locus $q(t)$ of the particle nor the time-dependent scalar potential field $\varphi(t,x)$ are given a priori, we arrive at the coupled particle-field systems as studied e.g. in \cite{BaGa}, \cite{Kun}, \cite{Spo}. In this case we rather look for time- and space-periodic solutions $q(t+T)=q(t)$, $\varphi(t+T,x)=\varphi(t,x)=\varphi(t,x+2\pi)$ of the coupled particle-field system 
\begin{eqnarray*}
 \nabla_t^2 q(t) &=& -\nabla(\varphi*\rho)(t,q(t)) - \nabla (\varphi^{\text{ext}}*\rho)(t,q(t)),\\
 \partial_t^2 \varphi(t,x) &=& \Delta\varphi(t,x)-\varphi(t,x)-\rho(x-q(t)),
\end{eqnarray*}
where we further allow for an exterior potential which is given a priori and is again time-periodic with period $T$ and space-periodic with period $2\pi$, $\varphi^{\text{ext}}(t+T,x)=\varphi^{\text{ext}}(t,x)=\varphi^{\text{ext}}(t,x+2\pi)$. Note that this set of equations can be viewed as a simplified version of the coupled Maxwell-Lorentz equations for the electric field $\mathbf{E}(t)$ and the magnetic field $\mathbf{B}(t)$ on $\T^3=\R^3/(2\pi\Z)^3$ and a charged particle with normalized charge whose locus $q(t)$ is constrained to the submanifold $Q\subset\T^3$, see \cite{Spo},
\begin{eqnarray*}
&&\nabla_t^2 q(t)=\pi_{q(t)}\left((\mathbf{E}*\rho)(t,q(t))+\partial_t q(t)\times(\mathbf{B}*\rho)(t,q(t))\right)\\
&&\left(\nabla\times\mathbf{E}\right)(t,x)=-\partial_t\mathbf{B}(t,x),\quad\left(\nabla\cdot\mathbf{E}\right)(t,x)=-\rho(q(t)-x),\\ &&\left(\nabla\times\mathbf{B}\right)(t,x)=\partial_t q(t)\cdot\rho(q(t)-x)+\partial_t\mathbf{E}(t,x),\quad\left(\nabla\cdot\mathbf{B}\right)(t,x)=0,
\end{eqnarray*}
where for the first equation $\pi_q:\R^3=T_q\T^3\to T_q Q$ denotes the orthogonal projection onto the tangent space of $Q$ at every $q\in Q$. In particular, as discussed in \cite[Section 2.3]{Spo}, \emph{it is important not just from the viewpoint of mathematics but also from the viewpoint of physics to consider extended particles rather than point particles, that is, to work with a sufficiently regular density function $\rho$ rather than the distribution $\delta$.} It can be shown that the coupled particle-field system is equivalent to the system 
\begin{eqnarray*}
 \partial_t q = p,&& \nabla_t p= -\nabla (\varphi*\rho)(q)-\nabla(\varphi^{\text{ext}}*\rho)(q)\\
 \partial_t \varphi = B\pi,&& \partial_t \pi = - B\varphi-B^{-1}\rho(q-\cdot).
\end{eqnarray*}
with $B=\sqrt{1-\Delta}$. It can be seen that this modified system is still Hamiltonian with Hamiltonian function given by
\[H_t(q,p,\varphi,\pi)=H_{\text{field}}(\varphi,\pi)+H_{\text{inter},t}(q,\varphi,\pi)+H_{\text{part},t}(q,p)\]
with
\begin{eqnarray*}
H_{\text{field}}(\varphi,\pi)&=&\frac{1}{2}\langle\varphi,B\varphi\rangle_{\Hil}+\frac{1}{2}\langle\pi,B\pi\rangle_{\Hil},\\ H_{\text{inter}}(q,\varphi,\pi)&=&(\varphi*\rho)(q)=\langle\varphi,B^{-1}\rho(q-\cdot)\rangle_{\Hil},\\H_{\text{part},t}(q,p)&=&\frac{1}{2}|p|^2+(\varphi^{\text{ext}}_t*\rho)(q),
\end{eqnarray*}
defined on the extended phase space $T^*Q\times H^{\frac{1}{2}}(\T^N,\C)$, where the symplectic form $\omega_{\Hil}$ on $\Hil=H^{\frac{1}{2}}(\T^N,\C)$ is given by the standard complex structure $J_{\Hil}\cdot(\varphi,\pi)=(-\pi,\varphi)$ and the standard real inner product $\langle\cdot,\cdot\rangle_{\Hil}$ on $H^{\frac{1}{2}}(\T^N,\C)$ given by $\displaystyle\langle f,g\rangle_{\Hil}=\int_{\T^N} f(x) (Bg)(x)\,dx$, see \cite{Kuk1}, \cite{Kuk2} for the Hamiltonian structure for the wave equation. 
\par
In this paper we continue with our project of defining a Floer homology theory for Hamiltonian PDEs with regularizing nonlinearities as arising e.g. in the study of Hamiltonian particle-field systems. The latter are obtained by coupling a classical mechanical system with a (linear) classical field theory; in particular, the underlying phase space is the direct product $M\times\Hil$ of a finite-dimensional symplectic manifold $M$ being the phase space for the mechanical system with an infinite-dimensional linear symplectic Hilbert space $\Hil$ being the phase space for the field theory. \par
While in our papers \cite{FL}, \cite{FL2} we could already prove an infinite-dimensional analogue of Gromov-Floer compactness for Hamiltonian PDE with regularizing nonlinearities, building on our earlier work in \cite{F}, and use it to prove cuplength estimates for the number of $T$-periodic orbits, in this paper we establish the second main ingredient for studying moduli spaces of Floer curves, namely the nonlinear Fredholm property for the nonlinear Cauchy-Riemann operator with zeroth order Hamiltonian term. As for the compactness theorems, apart from the obvious new problems arising from the infinite-dimensionality of the target space, the main new arising challenge is a small divisor problem: while for generic time periods the underlying linear Hamiltonian PDE only has the trivial periodic solution and the return map only has eigenvalues different from one, there is always a subsequence of
eigenvalues converging to one. On the other hand, in finite-dimensional Floer theory it is crucial to assume that the eigenvalues of the linearized return map stay away from one. We show that this problem can be resolved by considering suitable new Hilbert space completions and introducing the notion of nondegeneracy up to small divisors.\\\par

This paper is organized as follows: In section $2$ we introduce symplectic Hilbert spaces and scales, and specify the class of admissible Hamiltonians which we consider in this paper. Among other concrete examples we discuss Hamiltonian particle-systems obtained by coupling standard examples of linear Hamiltonian field theories with Hamiltonian mechanical systems on the cotangent bundle of a submanifold. In section $3$ we formulate two conjectures about Betti number estimates for the minimal number of $T$-periodic orbits that generalize both, the corresponding famous Betti number estimates for closed symplectic manifolds from \cite{Fl} and for cotangent bundles of closed manifolds from \cite{AS}, as well as the corresponding cuplength estimates proven in \cite{FL} and \cite{FL2} for the infinite-dimensional Hamiltonian systems that we consider. Furthermore we outline the definition of a Floer homology for the corresponding class of admissible Hamiltonian functions, including the definition of moduli spaces of Floer curves. In section $4$ we finally work out a nonlinear Fredholm theory that it is designed in order to prove that moduli spaces of Floer curves are smooth finite-dimensional manifolds under genericity assumptions. After discussing the new problems that arise due to small divisors, we in particular show how they can get resolved naturally by introducing suitable modified Hilbert space completions, which in turn leads to new notion of nondegeneracy for $T$-periodic orbits, called nondegeneracy up to small divisors. In the final section $5$ we prove that the latter property is generic for the class of Hamiltonian systems with regularizing nonlinearities that we consider.    

\section{Hamiltonian PDEs with regularizing nonlinearities}\label{HamPDE-section}
We start with recalling the mathematical framework for Hamiltonian PDEs from \cite{Kuk1}, \cite{Kuk2}, see also \cite{AM}. Let $\Hil$ denote a separable Hilbert space which is equipped with a bilinear two-form $\omega:\Hil\times\Hil\to\R$ which is (strongly) symplectic in the sense that it is anti-symmetric and $\omega=\omega_\Hil:\Hil\to\Hil^*$ is a linear isomorphism. Analogous to the finite-dimensional case, there exists a complete Darboux basis $(e_n^\pm)_{n\in\Z^N}$ of $\Hil$ in the sense that $\omega(e_n^+,e_m^-)=\delta_{n,m}$. Defining a linear operator $J=J_\Hil:\Hil\to\Hil$ by $Je_n^\pm=\mp e_n^\mp$, we find that $J$ is a compatible complex structure in the sense that $J^2=-1$ and $\omega=\langle J\cdot,\cdot\rangle$ for an equivalent real inner product $\langle \cdot,\cdot\rangle$ with corresponding norm $|\cdot|$ on $\Hil$ which we fix from now on. Note that this allows us to identify $\Hil$ with a complex subspace of $\Hil\otimes_\R\C$ with complex basis $z_n=(e_n^++ie_n^-)/\sqrt{2}$ and hence also with the space $\ell^2(\Z^N,\C)$. After choosing for every $n\in\Z^N$ a positive number $\theta_n$ such that $\theta_n=O(|n|)$, there is a natural sequence of Hilbert spaces $\Hil_h$, $h\in\R$ by requiring that the $\Hil_h$-norm of $z_n$ is equal to $\theta_n^h$. It is immediate to check that $(\Hil_h)_{h\in\R}$ is a symplectic Hilbert scale on $\Hil$ in the sense that $\Hil_0=\Hil$, $\Hil_h$ is a subset of $\Hil_{h'}$ and the embedding is compact and dense for all $h>h'$, and the symplectic form defines a linear isomorphism $\omega:\Hil_h\stackrel{\cong}{\to}\Hil_{-h}^*$ for all $h\in\R$. We define $\Hil_\infty=\bigcap_{h\in\Z}\Hil_h$ and $\Hil_{-\infty}=\bigcup_{h\in\Z}\Hil_h$; since $\omega:\Hil_\infty\to\Hil_\infty^*$ is only injective, we stress that $\omega$ only defines a weakly symplectic form on the Frechet space $\Hil_\infty$.\par
We generalize this setup to the case where $\wt{M}=M\times\Hil$ is the product of a finite-dimensional symplectic manifold $M=(M,\omega_M)$ with a (strongly) symplectic Hilbert space $\Hil$. $\wt{M}$ carries product symplectic form $\omega=\pi_M^*\omega_M+\pi_{\Hil}^*\omega_{\Hil}$, where $\pi_M,\pi_\Hil$ denotes the projection onto $M$, $\Hil$, respectively, and a scale structure given by $\wt{M}_h:=M\times\Hil_h$ for all $h\in\R$. In particular, note that $\wt{M}_\infty=M\times\Hil_\infty$ is a weakly symplectic infinite-dimensional Frechet manifold. Furthermore, choosing an $\omega_M$-compatible almost complex structure $J_M$ on $M$ in the sense that $\omega_M(\cdot,J_M\cdot)$ defines a Riemannian metric on $M$, we find that $J=\Diag(J_M,J_{\Hil})$ defines an $\omega$-compatible almost complex structure on $\wt{M}=M\times\Hil$.\par

As in \cite{FL} in this work we consider $T$-periodic Hamiltonians $H_t:\wt{M}\to\R$, $H_t=H_{t+T}$, of the form 
\[
H_t(u)=H^A(u)+F_t(u):=\frac{1}{2}\langle A\pi_\Hil(u),\pi_\Hil(u)\rangle+F_t(u).
\]
Here $A$ is a self-adjoint differential operator of order $d>0$ and we assume from now that the complete Darboux basis $(e_n^\pm)_{n\in\Z^N}$ of $\Hil$ consists of eigenfunctions of the differential operator $A$ with corresponding real eigenvalues $\lambda_n\in\R$, $n\in\Z^N$. Furthermore $F_t=F_{t+T}$ is a $T$-periodic Hamiltonian function modelling the regularizing nonlinearity, see \Cref{admissible-2} below. 
\begin{definition}
\label{admissible-1}
We call the pair $(A,T)$ \emph{admissible} when the following condition is satisfied: For $\epsilon_n\in(-\pi/T,\pi/T]$ being defined by $\epsilon_n:=\lambda_n\mod2\pi/T$ there exists $h_0\geq 2d$ such that for every $h>h_0$ there exists $c=c_h>0$ such that $|\epsilon_n|>c \theta_n^{-h}$.
\end{definition} 
An irrational number $\sigma$ is called Diophantine if there exists $c>0$ and $r>0$ such that $$\inf_{p\in\Z}\Big|\sigma-\frac{p}{q}\Big|\,\geq\, c\cdot q^{-r}\,\,\textrm{for all}\,\, q\in\N,$$ and the irrationality measure $r_0\geq 2$ of $\sigma$ is defined as the infimum of the set of $r>0$; note that for generic $\sigma$ we have $r_0=2$. \par
Before we start to motivate our definition, we first give two examples.  \begin{example}[Wave equation]
\label{wave}
Recall that the second-order wave equation $\partial_t^2\varphi=\Delta\varphi-a\cdot\varphi$ with $a\in\N\cup\{0\}$ can be rewritten as a first-order system 
\[\partial_t\varphi=B\pi,\,\,\partial_t=-B\varphi\,\,\text{with}\,\,B=\sqrt{a-\Delta}.\] Setting $u_{\Hil}=(\varphi,\pi)$ we see that this system is Hamiltonian for the densly defined Hamiltonian \[H_t(u_{\Hil})=H^A(u_{\Hil})=\frac{1}{2}\langle u_{\Hil},Au_{\Hil}\rangle_{\Hil}=\frac{1}{2} \langle\varphi,B\varphi\rangle_{\Hil}+\frac{1}{2}\langle\pi,B\pi\rangle_{\Hil}\,\,\text{with}\,\,A=\Diag(B,B)\] on the separable Hilbert space $\Hil=H^{\frac{1}{2}}(\T^N,\C)$, equipped with the symplectic form $\omega_{\Hil}=\langle J_{\Hil}\cdot,\cdot\rangle_{\Hil}$ given by the complex structure $J_{\Hil}$ and the inner product on $H^{\frac{1}{2}}(\T^N,\C)$ from \Cref{motivation-section}, where for $a=0$ we restrict ourselves to the Hilbert subspace $H^{\frac{1}{2}}_0(\T^N,\C)$ of functions with zero mean. There exists a complete Darboux basis $(e_n^\pm)_{n\in\Z^N}$ of $\Hil$ of the form $e_n^+=(\xi_n,0)$, $e_n^-=(0,\xi_n)$, where $(\xi_n)_{n\in\Z^N}$ is a suitably normalized complete basis of $H^{\frac{1}{2}}(\T^N,\R)$ in terms of sine and cosine functions, which furthermore consists of eigenfunctions of the linear operator $A$ with corresponding real eigenvalues $\lambda_n=\sqrt{n^2+a}$ with $n^2=n_1^2+\ldots+n_N^2$ for every $n=(n_1,\ldots,n_N)\in\Z^N$. Assume that $T^2/(2\pi)^2$ is a Diophantine irrational number with irrationality measure $r_0\geq 2$. Then we know that for every $r>r_0$ there exists $c>0$ such that \[\inf_{m\in\N}\left|\left(\frac{T}{2\pi}\right)^2-\frac{m^2}{n^2+a}\right|\geq c\cdot (n^2+a)^{-r};\] note that here and below we assume that $n\neq 0$ if $a=0$. Since \[\left(\frac{T}{2\pi}\right)^2-\frac{m^2}{n^2+a}=\left(\frac{T}{2\pi}-\frac{m}{\sqrt{n^2+a}}\right)\cdot\left(\frac{T}{2\pi}+\frac{m}{\sqrt{n^2+a}}\right),\] and the second factor is approximately equal to $2\cdot T/(2\pi)$ whenever the first factor is close to zero, it follows that there also exists some $c'>0$ and $r'=r-\frac{1}{2}$ such that for \[|\epsilon_n|=\frac{1}{T}\inf_{m\in\Z}\left|T\cdot\sqrt{n^2+a}-m\cdot 2\pi\right|=\frac{2\pi}{T}\cdot\sqrt{n^2+a}\cdot\inf_{m\in\Z}\left|\frac{T}{2\pi}-\frac{m}{\sqrt{n^2+a}}\right|\] we find that \[|\epsilon_n|> c'\cdot (n^2+a)^{-r'}.\] In particular, for $h>h_0=2r_0-1\geq 2$ there exists $c''>0$ such that $|\epsilon_n|>c''\theta_n^{-h}$.
\end{example}
\begin{example}[Electromagnetism]\label{EM}
Choosing a vector potential $\mathbf{A}$ satisfying the Coulomb gauge condition for the magnetic field $\mathbf{B}$ and the electric field $\mathbf{E}$ in the sense that \[\mathbf{B}=\nabla\times\mathbf{A},\quad\mathbf{E}=\partial_t\mathbf{A},\quad\text{and}\quad\nabla\cdot\mathbf{A}=0,\] we find that the charge-free Maxwell equations are equivalent to the fact that the vector potential $\mathbf{A}$ satisfies the wave equation, \[\partial_t^2\mathbf{A}=\Delta\mathbf{A}.\] Imposing again the condition that $\mathbf{A}$ is $2\pi$-periodic with respect to the space coordinates, it follows that the linear differential operator $A$ is as in \Cref{wave}, but the corresponding Hilbert space $\Hil$ is now the Hilbert space completion of the subspace of pairs of divergence-free vector fields on $\T^3=\R^3/(2\pi\Z)^3$ with zero mean, $\Hil=H^{\frac{1}{2}}_{0,*}(\T^3,\C^3)\subset H^{\frac{1}{2}}_0(\T^3,\C^3)$, see \cite{BaGa}. While it directly follows from \Cref{wave} that still $(A,T)$ is admissible whenever $T^2/(2\pi)^2$ is a Diophantine irrational number, since $a=0$ it can be readily checked that it is sufficient to assume that $T/(2\pi)$ is a Diophantine irrational number. 
\end{example}
\begin{example}[Schrödinger equation]
\label{Schrodinger}
The Schrödinger equation \[i\partial_t u_{\Hil}=-\Delta u_{\Hil}\] is a Hamiltonian PDE for the densly-defined Hamiltonian function \[H_t(u_{\Hil})=H^A(u_{\Hil})=\frac{1}{2}\langle u_{\Hil},Au_{\Hil}\rangle=-\frac{1}{2}\int_0^X |\nabla u_{\Hil}(x)|^2\,dx\,\,\text{with}\,\,A=\Delta\] on the separable Hilbert space $\Hil=L^2_0(\T^N,\C)$ of square-integrable functions with zero mean. It has a complete Darboux basis given by $e_n^+=e_n$, $e_n^-=-ie_n$, where $(e_n)_{n\in\Z^N\backslash\{0\}}$ is the complete system of eigenfunctions of $\Delta$ given by $e_n(x)=e^{inx}/\sqrt{2\pi}$ with eigenvalues $\lambda_n=n^2\neq 0$. Assume that $T/(2\pi)$ is a Diophantine irrational number with irrationality measure $r_0\geq 2$. Then we know that for every $r>r_0$ there exists $c>0$ such that \[|\epsilon_n|=\inf_{m\in\Z}\left|n^2-m\cdot\frac{2\pi}{T}\right|=\frac{2\pi}{T}\cdot n^2\cdot\inf_{m\in\Z} \left|\frac{T}{2\pi}-\frac{m}{n^2}\right|>c\cdot (n^2)^{1-r}.\] In particular, for every $h>h_0=\max\{r_0-1,2d\}$ with $d=2$ there exists $c'>0$ such that $|\epsilon_n|>c'\theta_n^{-h}$.  
\end{example}

Since the differential operator $A$ is of positive order $d>0$ and the Hamiltonian $H^A$ is hence only densely defined on $\wt{M}_{\floor{d/2}}\subset\wt{M}$, we rather work with its flow $\phi_t=\phi^A_t$ defined by \[\partial_t\phi^A_t = X^A\circ\phi^A_t = J_{\Hil} A\circ\phi^A_t\] which acts trivially on $M$ and is unitary and defined on all of $\Hil$. Indeed, working from now on in the complexified Hilbert space $\Hil\otimes_\R\C$ with complex basis $z_n=(e_n^++ie_n^-)/\sqrt{2}$, the flow $\phi^A_t$ of $H^A$ acts on an element $u\in \Hil$, written as $u=\sum_n\hat{u}(n)z_n$ with complex coefficients $\hat{u}(n)$, as
\[
\phi^A_tu=\sum_n\hat{u}(n)e^{i\lambda_nt}z_n=\sum_n\hat{u}(n)e^{i\epsilon_nt}z_n.
\]
For every $t\in\R$ note that $\phi^A_t$ preserves the Hilbert scale, $\phi^A_t:\Hil_h\to\Hil_h$, for every $h\in\R$. Furthermore it directly follows from the admissibility condition that $\phi^A_T$ has no eigenvalue equal to one. On the other hand, as it can be seen from our two examples, there typically exists a subsequence of the sequence of eigenvalues $e^{i\epsilon_n T}$ of $\phi^A_T$ which converges to one. This is known as \emph{small divisor problem}. In order to deal with the resulting problems, in our compactness papers \cite{FL}, \cite{FL2} we require that the Hamiltonian $F_t$ defining the nonlinearity can be approximated better by finite-dimensional Hamiltonians than the eigenvalues of $\phi^A_T$ approach one. Following \cite[Definition 3.1]{FL} we define
\begin{definition}\label{admissible-2}
We call the $T$-periodic nonlinearity Hamiltonian $F_t:\wt{M}=M\times\Hil\to\R$ \emph{admissible} if it is $h$-regularizing for some $h>h_0\geq 2d$ in the sense that it defines a $(\floor{h/d}+2)$-times continuously differentiable map from $\wt{M}_{-h}=M\times\Hil_{-h}$ to $\R$ which is $\floor{h/d}$-times continuously differentiable with respect to $t$, where $d>0$ denotes the order of the differential operator $A$. 
\end{definition}
Note that the fact that $F_t$ extends to a map from $\wt{M}_{-h}=M\times\Hil_{-h}$ to $\R$ implies that the symplectic gradient $X^F_t(u)=(J\nabla F_t)(u)$ is an element of $T_{u_M}M\oplus\Hil_{+h}\subset T_{u_M}M\oplus\Hil=T_u\wt{M}$ for every $u=(u_M,u_{\Hil})\in M\times\Hil\subset M\times\Hil_{-h}=\wt{M}_{-h}$. On the other hand, setting $G_t:=F_t\circ\phi^A_{-t}$, recall that $T$-periodic solutions \begin{align}
\label{hameqn}
\partial_tu=X^H_t(u);\qquad u(t+T)=u(t)
\end{align}
in $\P(H)$ to the Hamiltonian equation with Hamiltonian $H_t$ are in univocal correspondence with $\phi^A_T$-periodic solutions 
\begin{align}
\label{ghameqn}
\partial_tu=X^G_t(u);\qquad u(t+T)=\phi^A_{-T}u(t)
\end{align}
in $\P(\phi^A_T,G)$ to the Hamiltonian equation with Hamiltonian $G_t$. While working with the time-$T$ map $\phi^A_T$ instead of with the Hamiltonian $H^A$ itself avoids the problems that arise from the fact that $H^A$ is only densly defined, it can be seen in \cite[Lemma 6.1]{FL} that we now need to accept that, even if we assumed that $F_t$ depends smoothly on $t$, $G_t:\Hil\to\R$ is only $\floor{h/d}$-times continuously differentiable with respect to $t\in\R$: Indeed, in order to compute the $t$-derivatives of $G_t=F_t\circ\phi^A_{-t}$, we also need to take into account the $t$-derivatives of $t\mapsto\phi^A_{-t}$ which are given by $\partial_t^{\alpha}\phi^A_{-t}=(-JA)^{\alpha}\circ\phi^A_{-t}:\Hil\to\Hil_{-\alpha d}$ with $\Hil_{-\alpha d}\subset\Hil_{-h}$ as long as $\alpha\leq\floor{h/d}$, where $d>0$ is the order of $A$ and we use that $\phi^A_t$ preserves the symplectic Hilbert scale. The use of the different orders of differentiability in \Cref{admissible-2} are motivated by the following proposition, see also \cite[Lemma 6.1]{FL}.
\begin{proposition}\label{gradient+Hessian}
Assume that $F_t$ is $h$-regularizing in the sense of \Cref{admissible-2}. Then the gradient $\nabla G(t,u)=\nabla G_t(u)$ and the Hessian $\nabla^2 G(t,u)=\nabla^2 G_t(u)$ of $G_t=F_t\circ\phi^A_{-t}$ define $\floor{h/d}(\geq 2)$-times continuously differentiable maps $\nabla G:\R\times\Hil\to\Hil_{+h}$ and $\nabla^2 G:\R\times\Hil\to\Hil_{-h}^*\otimes\Hil_{+h}$.
\end{proposition}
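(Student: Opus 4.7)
The plan is to derive chain-rule formulas for $\nabla G_t$ and $\nabla^2 G_t$ in terms of $\nabla F_t$, $\nabla^2 F_t$ and the flow $\phi^A_{\pm t}$, and then to verify the claimed regularity by bookkeeping in the symplectic Hilbert scale $(\Hil_s)_{s\in\R}$.

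First I would identify the natural target spaces for the gradient and Hessian of $F_t$. Since $F_t$ extends to a $C^{\floor{h/d}+2}$-map on $\wt{M}_{-h}$, its differential $dF_t(u)$ belongs to $\Hil_{-h}^*$ for every $u\in\Hil_{-h}$. The real inner product on $\Hil$ extends to a non-degenerate pairing $\Hil_{+h}\times\Hil_{-h}\to\R$ (since $\omega:\Hil_{+h}\xrightarrow{\cong}\Hil_{-h}^*$ is an isomorphism of the scale and $\omega=\langle J\cdot,\cdot\rangle$ with $J$ preserving the scale), so $\nabla F_t(u)\in\Hil_{+h}$ and analogously $\nabla^2 F_t(u)\in\Hom(\Hil_{-h},\Hil_{+h})\cong\Hil_{-h}^*\otimes\Hil_{+h}$.

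Next I would apply the chain rule to $G_t=F_t\circ\phi^A_{-t}$. Because $\phi^A_{-t}:\Hil\to\Hil$ is linear (hence equal to its own differential) and unitary (hence with adjoint $\phi^A_t$), one obtains
\[
\nabla G_t(u)=\phi^A_t\,\nabla F_t(\phi^A_{-t}u),\qquad
\nabla^2 G_t(u)=\phi^A_t\circ\nabla^2 F_t(\phi^A_{-t}u)\circ\phi^A_{-t}.
\]
Since each $\phi^A_{\pm t}$ is a Fourier multiplier of modulus one, it is an isometry of every $\Hil_s$, so both right-hand sides belong to $\Hil_{+h}$ and $\Hil_{-h}^*\otimes\Hil_{+h}$ respectively for every $(t,u)$.

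For the $\floor{h/d}$-fold differentiability I would expand mixed partials $\partial^\alpha_{(t,u)}$ via the chain and product rule. A $u$-derivative consumes one of the $\floor{h/d}+2$ available space derivatives of $F_t$ and composes on both sides with the isometries $\phi^A_{\pm t}$; the buffer of two extra space derivatives in \Cref{admissible-2} is exactly what permits closing the chain rule through the Hessian at every order up to $\floor{h/d}$. A $t$-derivative either hits $F_t$ directly, which is permitted by its $\floor{h/d}$-fold $t$-differentiability, or hits one of the flow factors via $\partial_t\phi^A_{\pm t}=\pm JA\,\phi^A_{\pm t}$: an interior $JA$ inserted into an argument of a derivative of $F_t$ produces an element of $\Hil_{-\beta d}\subseteq\Hil_{-h}$ as long as $\beta\leq\floor{h/d}$, which may legitimately be substituted into the multi-linear form defined on $\Hil_{-h}$, while an exterior $JA$ acts on the output in $\Hil_{+h}$. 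Continuity of each summand in $(t,u)$ then follows from the strong continuity of $\phi^A_{\pm t}$ on each $\Hil_s$ together with the continuity hypotheses on the derivatives of $F_t$.

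The main obstacle is precisely this scale bookkeeping: confirming that at every step of every iterated chain-rule expansion the objects being composed—multi-linear forms on $\Hil_{-h}$, flow factors, and inserted arguments—lie in compatible levels of the scale, and that the threshold $\floor{h/d}$ for the number of $t$-derivatives is exactly the total interior scale loss tolerated by the domain $\Hil_{-h}$ of the derivatives of $F_t$. Once this bookkeeping is in place, the proposition reduces to a routine application of the chain and product rules for Fr\'echet-differentiable maps between Banach spaces.
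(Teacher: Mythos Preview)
Your approach matches the paper's: there is no formal proof in the paper, only a reference to \cite[Lemma 6.1]{FL} and the motivating sentence immediately preceding the proposition, which makes precisely your observation that $\partial_t^{\alpha}\phi^A_{-t}=(-JA)^{\alpha}\circ\phi^A_{-t}:\Hil\to\Hil_{-\alpha d}\subset\Hil_{-h}$ for $\alpha\le\floor{h/d}$, so the chain rule closes on the interior argument. Your chain-rule identities $\nabla G_t(u)=\phi^A_t\,\nabla F_t(\phi^A_{-t}u)$ and $\nabla^2 G_t(u)=\phi^A_t\circ\nabla^2 F_t(\phi^A_{-t}u)\circ\phi^A_{-t}$ together with the scale bookkeeping are exactly the intended argument.

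One point you flag without resolving, and which the paper's sketch also leaves implicit, is the effect of the \emph{exterior} factor $\phi^A_t$: a $t$-derivative landing on it produces an outer $JA$, which sends $\Hil_{+h}$ only to $\Hil_{+h-d}$, so that after $\beta$ such hits the output lies merely in $\Hil_{+h-\beta d}$ rather than in $\Hil_{+h}$. Read literally, this obstructs $\nabla G$ from being $C^{\floor{h/d}}$ into the \emph{fixed} target $\Hil_{+h}$; for instance with $F_t(w)=\langle w_0,w\rangle$, $w_0\in\Hil_{+h}\setminus\Hil_{+h+d}$, one gets $\partial_t\nabla G_t(u)=JA\phi^A_t w_0\notin\Hil_{+h}$. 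The downstream uses in the paper (e.g.\ showing $t\mapsto\nabla G_t(u(t))\in H^k_\phi(\R,\Hil_{+h})$ along $\phi$-periodic paths) are unaffected, so this is more a looseness in how the proposition is phrased than a genuine obstruction, but your bookkeeping should track the output scale as carefully as the input scale if you want the statement exactly as written.
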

While \Cref{admissible-2} just seems to be suited in order to generalize Floer theory to infinite dimensions, we show below that the coupled particle-field system as well as the coupled Maxwell-Lorentz equations discussed above provide us with examples. Moreover, still in the case when $M=T^*Q$, we introduce a much larger but still natural class of Hamiltonian systems describing Hamiltonian mechanical systems coupled with Hamiltonian field theories. But before, in order to see that one can readily write down examples also in the case of general finite-dimensional symplectic manifolds, we first consider the following generalization of the examples given in \cite[Example 1.1+1.2]{FL}.
\begin{proposition}
Consider $\Hil$ and $A$ as in \Cref{wave}, \Cref{EM}, or as in \Cref{Schrodinger}. Defining \[F_t(u)=F_t(u_M,u_{\Hil})=\int_{\T^N} f_t(u_M,(u_{\Hil}*\rho)(x),x)\,dx\] with $f_t=f_{t+T}:M\times\R^2\times\T^N\to\R$ ($f_t=f_{t+T}:M\times\R^6\times\T^3\to\R$ in \Cref{EM}) being $(\floor{h/d}+2)$-times continuously differentiable and  $\floor{h/d}$-times continuously differentiable with respect to $t\in\R$, we find that $F_t$ is admissible in the sense of \Cref{admissible-2}, provided that $\rho:\T^N\to\R$ is $h$-times continuously differentiable for some $h>h_0$.
\end{proposition}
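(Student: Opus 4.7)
The plan is to factor $F_t$ as a composition of three maps and transfer regularity through the chain rule. Write $F_t = I\circ \Phi_t \circ (\mathrm{id}_M\times K)$, where $K\colon\Hil_{-h}\to C^0(\T^N)$ is the linear convolution operator $u_\Hil\mapsto u_\Hil*\rho$, $\Phi_t\colon M\times C^0(\T^N)\to C^0(\T^N)$ sends $(u_M,\psi)$ to the function $x\mapsto f_t(u_M,\psi(x),x)$ (interpreted componentwise in the cases of \Cref{wave}, \Cref{EM}), and $I\colon C^0(\T^N)\to\R$ is integration.

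The first step is to check that $K$ is bounded. On the Fourier side $K$ multiplies the $n$-th coefficient by $\hat\rho(n)$; since $\rho\in C^h(\T^N)$ on the compact torus embeds into the Sobolev space $H^h(\T^N)$, the sequence $(\theta_n^h\hat\rho(n))_n$ lies in $\ell^2$, while $u_\Hil\in\Hil_{-h}$ means $(\theta_n^{-h}\hat u_\Hil(n))_n\in\ell^2$. Cauchy--Schwarz then yields absolute summability of $\sum_n\hat u_\Hil(n)\hat\rho(n)e_n(x)$ uniformly in $x$, so $K(u_\Hil)\in C^0(\T^N)$ with a bound linear in $\|u_\Hil\|_{\Hil_{-h}}$. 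Being bounded and linear, $K$ is smooth of all orders.

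The second step is a Nemitsky-type composition lemma: since $f_t$ is $(\floor{h/d}+2)$-times continuously differentiable jointly in $(u_M,s,x)$ and $\T^N$ is compact, post-composition $\Phi_t$ is $(\floor{h/d}+2)$-times continuously differentiable as a map $M\times C^0(\T^N)\to C^0(\T^N)$. Pre- and post-composing with the bounded linear maps $K$ and $I$ preserves this regularity class, and the chain rule delivers the required spatial differentiability of $F_t\colon\wt M_{-h}\to\R$. For the time dependence, the assumption that $f_t$ is $\floor{h/d}$-times continuously differentiable in $t$ with derivatives continuous in all variables lets dominated convergence pass the $t$-derivatives through the (fixed, $t$-independent) convolution and the spatial integration, yielding the same order of $t$-regularity for $F_t$.

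The main technical point to watch is the boundedness of $K$ in step one, which rests on the scale assumption $\theta_n = O(|n|)$ combined with $C^h(\T^N)\hookrightarrow H^h(\T^N)$; once this is secured, the remainder is standard chain-rule bookkeeping entirely parallel to \cite[Example 1.1+1.2]{FL}.
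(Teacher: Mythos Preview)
Your argument is correct and follows essentially the same route as the paper: both proofs rest on the observation that convolution with $\rho\in C^h$ sends $\Hil_{-h}$ into $C^0(\T^N)$, after which the $(\lfloor h/d\rfloor+2)$-fold differentiability of $f_t$ and integration over the compact torus yield the claim. Your factorization $F_t=I\circ\Phi_t\circ(\mathrm{id}_M\times K)$ through $C^0(\T^N)$ and the explicit Fourier/Cauchy--Schwarz bound for $K$ make the structure more transparent than the paper's terse version, which simply asserts $u_\Hil*\rho\in C^0$ and that the partial derivatives $\partial_2^\alpha f_t$ are continuous hence integrable, but the underlying content is the same.
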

In particular, when the shape function $\rho$ of the particle is smooth and $f_t$ is smooth and smoothly depending on $t$, then $F_t$ is $h$-regularizing for \emph{all} $h>h_0$.
\begin{proof}
When $u_{\Hil}\in\Hil_{-h}$ and $\rho\in C^h$, then $u_{\Hil}*\rho\in C^0$ for all examples. For $\alpha\leq \floor{h/d}+2$ it follows that the maps $x\mapsto\partial_2^{\alpha}f_t(u_M,(u_{\Hil}*\rho)(x),x)$ obtained by differentiating $f_t$ with respect to the second argument are continuous and hence (square-) integrable over $\T^N$. Altogether this is sufficient to prove that $F_t$ is a $(\floor{h/d}+2)$-times continuously differentiable map from $\wt{M}_{-h}=M\times\Hil_{-h}$ to $\R$ in both examples which is $\floor{h/d}$-times continuously differentiable with respect to $t\in\R$. 
\end{proof}
But now let us turn to the most important class of examples, at least from the viewpoint of classical physics. Generalizing the example of the Hamiltonian system coupling a Hamiltonian mechanical system with the wave equation from above and motivated by \cite{BaGa}, \cite{Kun}, \cite{Spo}, we consider the following natural class of $h$-regularizing Hamiltonians. 
\begin{proposition}\label{particle-field} Consider $\Hil$ and $A$ as in \Cref{wave}, \Cref{EM}, or as in \Cref{Schrodinger}, and let $Q\subset\T^N$ be a smooth submanifold ($N=3$ for \Cref{EM}). Let $F_t$ be a $T$-dependent Hamiltonian function on $M\times\Hil$ of the form
\[F_t(u)=F_t(q,p,u_{\Hil})=f_t(q,p,(u_{\Hil}*\rho)(q)),\]
where $f_t$ is a $\floor{h/d}+2$-times continuously differentiable map from $T^*Q\times\R^2$ ($T^*Q\times\R^6$ in \Cref{EM}) to $\R$ being $\floor{h/d}$-times continuously differentiable with respect to $t\in\R$ and we additionally assume that the density function $\rho:\T^N\to\R$ of the particle is $(h+\floor{h/d}+2)$-times continuously differentiable for some $h>h_0$. Then $F_t$ is $h$-regularizing in the sense of \Cref{admissible-2}. 
\end{proposition}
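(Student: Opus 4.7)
The plan is to establish the required three properties of $F_t$ — extendability to $\wt{M}_{-h}$, spatial $C^{\floor{h/d}+2}$-regularity, and temporal $C^{\floor{h/d}}$-regularity — by reducing everything to a single regularity statement about the convolution map, and then applying the chain rule against $f_t$.

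First, I would isolate the following key lemma: under the assumption $\rho\in C^{h+\floor{h/d}+2}(\T^N)$, the evaluation-of-convolution map
\[
E:Q\times\Hil_{-h}\longrightarrow\R,\qquad E(q,u_\Hil):=(u_\Hil*\rho)(q),
\]
is well-defined and $(\floor{h/d}+2)$-times continuously differentiable. The starting observation is the representation $(u_\Hil*\rho)(q)=\langle u_\Hil,K\rho(q-\cdot)\rangle_{\Hil}$, where $K$ is a bounded operator of finite order coming from the specific inner product on $\Hil$ (in \Cref{wave}, \Cref{EM} it is $B^{-1}$, and in \Cref{Schrodinger} it is essentially the identity on the modes, possibly with a finite shift compensated by $h\geq h_0\geq 2d$). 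Since $\rho\in C^{h+\floor{h/d}+2}$, the shifted kernel $K\rho(q-\cdot)$ lies in $\Hil_{+h}$ with continuous dependence on $q$, so the pairing with $u_\Hil\in\Hil_{-h}$ is absolutely convergent and bilinear-continuous.

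Next, I would differentiate $E$ under the pairing. For every multi-index $\alpha$ with $|\alpha|\leq\floor{h/d}+2$ one has, by differentiating inside the pairing in $q$,
\[
\partial_q^{\alpha} E(q,u_\Hil)=\langle u_\Hil,\,K(\partial_q^{\alpha}\rho)(q-\cdot)\rangle_{\Hil},
\]
which again makes sense in $\Hil_{-h}\times\Hil_{+h}$ because $\partial_q^{\alpha}\rho\in C^{h}(\T^N)\subset\Hil_{+h}$ for $|\alpha|\leq\floor{h/d}+2$; the resulting expression is jointly continuous in $(q,u_\Hil)$. Derivatives in the $u_\Hil$-direction are trivial since $E$ is linear in $u_\Hil$, and mixed derivatives collapse onto the same pairing formula. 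Hence $E\in C^{\floor{h/d}+2}(Q\times\Hil_{-h},\R)$.

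With this lemma in hand, the proposition follows from the chain rule. Writing $F_t(q,p,u_\Hil)=f_t(q,p,E(q,u_\Hil))$, the composition of the $C^{\floor{h/d}+2}$-map $(q,p,u_\Hil)\mapsto(q,p,E(q,u_\Hil))$ from $M\times\Hil_{-h}$ to $T^*Q\times\R$ (respectively $T^*Q\times\R^6$ in the electromagnetic case) with the $(\floor{h/d}+2)$-times continuously differentiable map $f_t$ is itself $(\floor{h/d}+2)$-times continuously differentiable on $\wt{M}_{-h}$. The temporal regularity is inherited directly: since $E$ does not depend on $t$ and $f_t$ is $\floor{h/d}$-times continuously differentiable in $t$, the same holds for $F_t$, giving the admissibility condition of \Cref{admissible-2}.

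The main technical obstacle is the convolution step: one has to verify that the inner product twist $K$ together with the translation $q\mapsto\rho(q-\cdot)$ really does produce a map into $\Hil_{+h}$ that is $(\floor{h/d}+2)$-times continuously differentiable in $q$, which is precisely what forces the regularity assumption $\rho\in C^{h+\floor{h/d}+2}$ (the $+h$ accounts for placing the kernel into $\Hil_{+h}$, and the $+\floor{h/d}+2$ accounts for the $q$-derivatives required by the chain rule). Once this bookkeeping is done, the remainder is a formal application of the chain rule and introduces no new analytic difficulty.
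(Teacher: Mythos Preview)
Your proof is correct and follows essentially the same route as the paper: both reduce the problem to the regularity of the convolution map $(q,u_\Hil)\mapsto(u_\Hil*\rho)(q)=\langle u_\Hil,(B^{-1})\rho(q-\cdot)\rangle_\Hil$, use the regularity of $\rho$ to get the $q$-derivatives and the linearity in $u_\Hil$ to handle the $\Hil$-direction and mixed derivatives, and then compose with $f_t$. Your write-up is more explicit about the chain rule and the temporal regularity than the paper's terse version, but there is no substantive difference in strategy.
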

Again, when the shape function $\rho$ of the particle is smooth and $f_t$ is smooth and smoothly depending on $t$, then $F_t$ is $h$-regularizing for \emph{all} $h>h_0$. While the regularity of the density function $\rho$ is hence clearly needed in order to have a sufficiently regularizing nonlinearity, we would like to stress again that the use of the regularizing density function $\rho$ instead of the $\delta$-distribution is well-motivated from the viewpoint of physics, see e.g. \cite[Section 2.3]{Spo} for a detailed discussion.\par
\begin{proof}
It suffices to analyze the map \[Q\times\Hil_{-h}\to\R,\,\,(q,u_{\Hil})\mapsto (u_{\Hil}*\rho)(q)=\langle u_{\Hil},(B^{-1})\rho(q-\cdot)\rangle.\] Since $\rho$ is $(h+\floor{h/d}+2)$-times continuously differentiable, in all three cases it follows for every $u\in\Hil_{-h}$ that the function $u_{\Hil}*\rho$ is $(\floor{h/d}+2)$-times continuously differentiable. Since the derivative with respect to $\Hil$ is given by $(B^{-1})\rho(q-\cdot)$ and hence is independent of $u_{\Hil}\in\Hil_{-h}$, it follows that also mixed derivatives exist and are continuous up to order $\floor{h/d}+2$. 
\end{proof}
\begin{example}[coupled particle-field system] Let $\Hil$ and $A$ be as in \Cref{wave}. Defining $f_t:T^*Q\times(\R\times\R)\to\R$ by \[f_t(q,p,r_1,r_2)=\frac{1}{2}|p|^2+(\varphi^{\text{ext}}_t*\rho)(q)+r_1\] we find that $F_t(q,p,u_{\Hil})=f_t(q,p,(u_{\Hil}*\rho)(q))=H_{\text{part},t}(q,p)+H_{\text{inter}}(q,u_{\Hil})$. 
\end{example}
\begin{example}[coupled Maxwell-Lorentz equations]
Let $\Hil$ and $A$ be as in \Cref{EM}. It can be shown that the coupled Maxwell-Lorentz equations are obtained by setting  \[F_t(q,p,u_{\Hil})=f_t(q,p,(u_{\Hil}*\rho)(q))=\frac{1}{2}\left|p-\pi_q\left((\mathbf{A}*\rho)(q)\right)\right|^2\] with the map \[f_t:T^*Q\times(\R^3\times\R^3)\to\R\quad\text{given by}\quad f_t(q,p,r_1,r_2)=\frac{1}{2}\left|p-\pi_q r_1\right|^2,\]  where we use the inner product on $\R^3$ to define for every $q\in Q\subset\T^3$ the orthogonal projection $\pi_q: \R^3=T_q\T^3\to T_q Q$ and to identify $T_q^*Q\cong T_q Q$. 
\end{example}

\section{Moduli spaces of Floer curves}
\label{moduli-section}
Building on previous work \cite{FL} and \cite{FL2} which itself builds on \cite{F}, it is the goal to construct a full Floer homology theory for time-periodic Hamiltonians $H_t=H^A+F_t$ on $\wt{M}=M\times\Hil$ which are admissible in the sense of \Cref{admissible-1} and \Cref{admissible-2}. More concretely, we want to focus on the case where the finite-dimensional symplectic manifold $M$ is either closed with $\pi_2(M)=\{0\}$ or the cotangent bundle $T^*Q$ of a closed manifold $Q$. We follow the by now standard approach to constructing Floer homology which can be found in \cite{Sa} and \cite{AD}. The main application of the results of this paper is to establish Betti number estimates for the minimal number of $T$-periodic orbits, upgrading our already existing results in terms of cuplenghts in \cite{FL} and \cite{FL2}.

\begin{conjecture}
\label{arnoldconjecture}
Assume that finite-dimensional symplectic manifold $(M,\omega_M)$ is closed and aspherical. For every Hamiltonian $H_t=H^A+F_t$ on $M\times\Hil$ which is admissible in the sense of \Cref{admissible-1} and \Cref{admissible-2} with generic choice of $F_t$ the number of solutions to the Hamiltonian equation \eqref{ghameqn} and hence to \eqref{hameqn} is bounded from below by the sum of the $\Z_2$-Betti numbers of $M$. \end{conjecture}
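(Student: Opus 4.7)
The plan is to follow the standard Floer homological approach of \cite{Fl}, \cite{Sa}, \cite{AD}, adapted throughout to the infinite-dimensional setting developed here. First, for a generic admissible choice of $F_t$, one forms a $\Z_2$-chain complex $CF_*(H)$ freely generated by $\P(H)\cong\P(\phi^A_T,G)$, using the genericity result announced in section $5$ to conclude that every $T$-periodic orbit is nondegenerate up to small divisors and hence that $\P(H)$ is a finite set. The differential $\partial$ counts isolated components of the moduli spaces $\M(x^-,x^+)$ of Floer curves from section $3$: the Fredholm property established in section $4$ equips the linearized Cauchy--Riemann operator with a well-defined index computing $\dim\M(x^-,x^+)$, and combined with the Gromov--Floer compactness of \cite{FL}, \cite{FL2}, the zero-dimensional components of $\M(x^-,x^+)$ are finite. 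The relation $\partial^2=0$ follows from the standard gluing/breaking dichotomy, once the gluing construction of \cite{Sa} is extended to the small-divisor-adapted Hilbert completions of section $4$.

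Second, one shows that $HF_*(H)$ is independent of the admissible choice of $F_t$ by connecting two generic choices $F^0_t$, $F^1_t$ via an $s$-homotopy $F^s_t$ whose intermediate terms remain $h$-regularizing in the sense of \Cref{admissible-2}, so that the Fredholm and compactness theorems apply uniformly in $s$; the resulting chain-level continuation maps induce a quasi-isomorphism in the usual way. Third, one computes $HF_*(H)$ by specializing to an admissible Hamiltonian of the split form $H_t(u_M,u_\Hil)=H^A(u_\Hil)+\varepsilon f(u_M)$, where $f\colon M\to\R$ is a $C^2$-small time-independent Morse function. Since \Cref{admissible-1} forbids $\phi^A_T$ from having $1$ as an eigenvalue, the unique $\phi^A_T$-periodic orbit on the $\Hil$-factor is the zero vector, so $\P(H)$ reduces to $\Crit(f)\subset M\times\{0\}$; and for $\varepsilon$ sufficiently small, with $\pi_2(M)=\{0\}$ ruling out sphere bubbling in the $M$-factor, a standard Floer--Morse argument identifies $\M(x^-,x^+)$ with negative gradient lines of $f$ on $M$, yielding $HF_*(H)\cong HM_*(M,f)\cong H_*(M,\Z_2)$. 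The desired bound $\#\P(H)\geq\dim_{\Z_2}H_*(M,\Z_2)$ follows since the number of generators of any chain complex dominates the rank of its homology.

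The main obstacle lies in the third step, namely ensuring that the reduction to the Morse complex of $(M,f)$ survives the transition to the small-divisor-adapted Hilbert completions of section $4$. Concretely, one must rule out nontrivial Floer trajectories whose $\Hil$-component is not identically zero; heuristically, the admissibility bound $|\epsilon_n|>c\theta_n^{-h}$ together with the $h$-regularizing nature of the nonlinearity forces any such trajectory to have arbitrarily small projection on each eigenmode of $A$ after Fourier decomposition, but making this rigorous demands uniform a priori estimates compatible with the weighted norms underlying the Fredholm theory. A second, more conceptually delicate difficulty is to install an absolute $\Z$-grading on $CF_*(H)$ through a relative Conley--Zehnder index computed modulo the trivialization provided by $\phi^A_T$, consistent with the passage from \eqref{hameqn} to \eqref{ghameqn} and with the infinite Morse index of the action functional along the $\Hil$-factor; the assumption $\pi_2(M)=\{0\}$ is exactly what is needed to render this relative index well-defined and path-independent.
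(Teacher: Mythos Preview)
The statement you are attempting to prove is labeled in the paper as a \emph{Conjecture}, not a theorem; the paper does not offer a proof of it. What the paper does is establish two ingredients toward the conjectured Floer homology: the Fredholm property of the linearized Floer operator (Section~4) and the genericity of nondegeneracy up to small divisors (Section~5). The outline of the Floer package in Section~3 is explicitly programmatic.

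Your proposal is therefore not to be compared against a proof in the paper, because there is none. What you have written is a reasonable strategic sketch that mirrors the program announced in Section~3, and you are candid that it is incomplete: you flag as open both the gluing construction in the modified Hilbert completions, the reduction to the Morse complex on $M\times\{0\}$ for the split Hamiltonian, and the construction of an absolute grading via a normalized Conley--Zehnder index. These are precisely the pieces the paper does \emph{not} supply. In particular, neither $\partial^2=0$, nor invariance under continuation, nor the computation $HF_*\cong H_*(M;\Z_2)$ is carried out here; the paper defers the full Betti-number estimate to the forthcoming work \cite{FL3} (and even that reference is only for the cotangent-bundle version). So your write-up should be read as a plausible roadmap consistent with the paper's own stated plan, not as a proof, and it would be misleading to present it as one.
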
\par

For the case when the finite-dimensional symplectic manifold is the cotangent bundle of a closed manifold $Q$, let us restrict the general class of Hamiltonians to the case of particle-field Hamiltonians on $T^*Q\times H^{\frac{1}{2}}(T^N,\C)$ for which we could already establish the necessary $C^0$-bounds in \cite{FL2}. This means that \begin{eqnarray*}
H^A(u)\quad\text{as in \Cref{wave} and}\quad F_t(u)=F_{\prt,t}(q,p)+F_{\inter,t}(q,u_{\Hil}),
\end{eqnarray*}
where the particle Hamiltonian $F_{\prt,t}\in C^{\infty}(T^*Q,\R)$ is asymptotically quadratic with respect to the momentum coordinates $p$ in the sense that 
\begin{itemize}
\item[(F1)] $dF_{\prt,t}(q,p)\cdot p\frac{\partial}{\partial p} - F_{\prt,t}(q,p)\geq c_0 |p|^2 - c_1$, for some constants $c_0>0$ and $c_1\geq 0$,
\item[(F2)] $|\nabla_q F_{\prt,t}(q,p)|\leq c_2 (1+|p|^2),\,\,|\nabla_p F_{\prt,t}(q,p)|\leq c_2 (1+|p|)$ for some constant $c_2\geq 0$,
\end{itemize} 
while for the interaction Hamiltonian we require that \begin{itemize}
\item[(F3)] $F_{\inter,t}(q,u_{\Hil})=f_t((u_{\Hil}*\rho)(q))$, where $f_t$ has bounded first derivatives.
\end{itemize}
In our upcoming paper \cite{FL3} we prove the following
\begin{conjecture}
Assume that the finite-dimensional symplectic manifold is the cotangent bundle of a closed manifold $Q$ and that the  particle-field Hamiltonian $H_t=H^A+F_t$ on $T^*Q\times H^{\frac{1}{2}}(T^N,\C)$ is of the form $F_t=F_{\text{part},t}+F_{\text{inter},t}$ satisfying (F1), (F2), (F3) stated above. Then the number of solutions to the Hamiltonian equation \eqref{ghameqn} and hence to \eqref{hameqn} is bounded from below by the sum of the $\Z_2$-Betti numbers of the space of contractible loops in $Q$, provided that for every $T$-periodic orbit the linearized return map has no eigenvalue equal to one.
\end{conjecture}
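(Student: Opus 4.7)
The plan is to construct the full Floer homology $HF_*(H)$ for the particle-field Hamiltonian $H_t = H^A + F_t$ on $T^*Q \times H^{\frac{1}{2}}(\T^N,\C)$, identify it with the $\Z_2$-homology of the space of contractible loops in $Q$, and then invoke the standard Morse-type inequality
\[
\#\P(H) \;\geq\; \dim_{\Z_2} HF_*(H) \;=\; \sum_k \dim_{\Z_2} H_k(\Lambda Q; \Z_2).
\]
The argument is an infinite-dimensional extension of the Abbondandolo--Schwarz proof for cotangent bundles from \cite{AS}, carried out in the framework outlined in \Cref{moduli-section}.

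First I would verify the analytic foundations. The hypothesis that every $T$-periodic orbit has linearized return map with no eigenvalue equal to one, combined with admissibility of $(A,T)$ from \Cref{admissible-1}, should imply nondegeneracy up to small divisors in the sense of Section 4, so that the Fredholm theorem of this paper applies to the linearized Floer operator. A Sard--Smale argument over generic admissible perturbations of $F_{\inter,t}$ preserving (F3) then cuts the moduli spaces $\M(x^-,x^+;H)$ out transversely as smooth finite-dimensional manifolds. Their compactness follows from \cite{FL},\cite{FL2}: (F1)--(F2) supply the Abbondandolo--Schwarz $C^0$-bound on the $T^*Q$-component via a maximum-principle argument, while (F3) together with the $h$-regularizing property of $F_{\inter,t}$ controls the $\Hil$-component. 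Standard continuation arguments then produce $CF_*(H)$ and establish invariance of $HF_*(H)$ under admissible homotopies.

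To compute $HF_*(H)$ I would use the homotopy $H_s = H^A + F_{\prt,t} + s\cdot F_{\inter,t}$, $s \in [0,1]$, reducing to the split Hamiltonian $H_0 = H^A + F_{\prt,t}$. Admissibility of $(A,T)$ guarantees $\epsilon_n \neq 0$ for every $n$, so $\phi^A_T$ has no eigenvalue equal to one and the only $T$-periodic orbit of $H^A$ in $\Hil$ is the origin; hence $\P(H_0)$ projects bijectively onto the set of $T$-periodic orbits of $F_{\prt,t}$ on $T^*Q$. For a Floer cylinder $u = (u_M, u_\Hil)$ for $H_0$, the product structure of $J$ and the split form of $H_0$ decouple the equation, and the $\Hil$-component solves a linear Schr\"odinger-type equation whose mode-by-mode energy identity forces $u_\Hil \equiv 0$ once the asymptotes are zero. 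Thus $CF_*(H_0)$ reduces to the Abbondandolo--Schwarz complex on $T^*Q$ for $F_{\prt,t}$, whose homology is $H_*(\Lambda Q;\Z_2)$.

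The main obstacle I expect is promoting the compactness estimates of \cite{FL2} to the one-parameter continuation $H_s$ uniformly in $s \in [0,1]$: the Abbondandolo--Schwarz $C^0$-argument on $T^*Q$ must be re-examined in the presence of the $s$-dependent coupling $s\cdot F_{\inter,t}$, with constants in (F1)--(F3) kept under uniform control along the homotopy, and the $\Hil$-component must be bounded simultaneously via the $h$-regularizing estimate. A secondary but more subtle difficulty is rigorously carrying out the decoupling argument for $H_0$-Floer cylinders in the symplectic Hilbert scale, where one must combine the mode-by-mode energy identity with the small-divisor behaviour of $\phi^A_T$ in order to apply the $\Hil_{-h}$-valued analytic framework of Section 4 and rule out nonzero $\Hil$-components with the correct decay at the ends of the cylinder.
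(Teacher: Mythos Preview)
The paper does not prove this statement: it is explicitly labelled a \emph{conjecture} and its proof is deferred to the forthcoming paper \cite{FL3}. The present paper supplies only two ingredients toward it, namely the Fredholm property of the linearized Floer operator (\Cref{Fredholm}) and the genericity of nondegeneracy up to small divisors (\Cref{transversalitythm}). So there is no proof here for your proposal to be compared against.

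That said, your outline matches the programme sketched in \Cref{moduli-section} and uses the paper's results in the intended way. One small point: you write that the no-eigenvalue-one hypothesis ``should imply'' nondegeneracy up to small divisors; in fact the Proposition following \Cref{nondegenerate} shows these are \emph{equivalent}, so this step is already handled. The genuine gaps you flag---uniform $C^0$-bounds along the continuation $H_s$ and the rigorous decoupling of the $\Hil$-component for the split Hamiltonian $H_0$ within the modified Sobolev framework---are exactly the kind of work the paper postpones to \cite{FL3}, together with gluing, the identification of the Fredholm index with a (normalized) Conley--Zehnder index, and the proof that $\partial^2=0$. Your proposal is therefore a reasonable blueprint, but it is a blueprint for \cite{FL3} rather than a proof contained in, or comparable to anything in, the present paper.
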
\par

Denote by $\P=\P(\phi,G)$ the set of $\phi^A_T$-periodic orbits $u$ of $G_t$, with $\phi=\phi^A_T$. As in finite dimensions it is the goal to define Floer homology as the homology of a chain complex $$HF_*(A,F)=HF_*(\phi,G)=H_*(CF_*,\partial),$$ where the underlying chain groups $CF_*$ are defined as $\Z_2$-vector spaces spanned by the $T$-periodic orbits $u$ with the same Conley-Zehnder index $\mu_{CZ}(u)$, $CF_*=\Z_2\langle u\in\P: \mu_{CZ}(u)=*\rangle$; note that away from the example of cotangent bundles we assume here that $\pi_2(M)=\{0\}$ such that the Conley-Zehnder index can be defined without extra choices. The boundary operator $\partial:CF_*\to CF_{*-1}$ is to be defined as $$\partial u^+=\sum_{u^-}\#\M(u^+,u^-)/\R\cdot u^-$$ by counting in $\Z_2$ elements in the quotient set $\M(u^+,u^-)/\R$. \par

Here $\M=\M(u^+,u^-)=\M(u^+,u^-;G)$ denotes the moduli space of Floer curves $\wt{u}$ connecting $u^+$ and $u^-$, that is, the set of all $\wt{u}:\R^2\to\wt{M}$ satisfying the periodicity condition $\wt{u}(s,t+1)=\phi^{-1}(\wt{u}(s,t))$, the asymptotic condition $\wt{u}(s,t)\to u^\pm(t)$ as $s\to\pm\infty$, and the Floer equation $$\CR_G\wt{u}=\partial_s\wt{u}+J(\wt{u})\partial_t\wt{u}+\nabla G_t(\wt{u})=0.$$ Note that $\M(u^+,u^-)$ carries a natural $\R$-action and for the definition of the boundary operator $\partial:CF_*\to CF_{*-1}$ to make sense we first wish to prove that $\M(u^+,u^-)$ is a one-dimensional manifold for generic choices of $H_t=H^A+F_t$, that is, of $F_t$. \par

For this we use that each moduli space $\M(u^+,u^-;G)$ can be described as the zero set of the perturbed Cauchy-Riemann operator $\CR_G$, also called nonlinear Floer operator, $$\M(u^+,u^-;G)=\CR_G^{-1}(0)\subset\B(u^+,u^-),$$ where $\B=\B(u^+,u^-)$ is a suitable Hilbert space completion defined below in \Cref{Fredholmsection} of the space $C^{\infty}_{\phi}(u^+,u^-)$ of smooth maps $\wt{u}:\R^2\to\wt{M}=M\times\Hil$ satisfying the periodicity condition $\wt{u}(s,t+1)=\phi^{-1}(\wt{u}(s,t))$ and the asymptotic condition $\wt{u}(s,t)\to u^\pm(t)$ as $s\to\pm\infty$. As in finite-dimensional Floer theory the nonlinear Floer operator can be viewed as a section in a Hilbert space bundle $\E=\E(u^+,u^-)\to\B(u^+,u^-)$. Assuming that the nonlinear Floer operator $\CR_G$ is transversal to the zero section and nonlinear Fredholm in the sense that its linearization $D_{\wt{u}}: T_{\wt{u}}\B\to \E_{\wt{u}}$ at every 
$\wt{u}\in\M=\CR_G^{-1}(0)$ is linear Fredholm, it follows from the implicit function theorem for nonlinear Fredholm maps between Hilbert manifolds that $\M$ is a smooth manifold of finite dimension given by the Fredholm index and the tangent space $T_{\wt{u}}\M$ to the moduli space $\M=\M(u^+,u^-)$ is given by the kernel of the linearization $D_{\wt{u}}$ at every $\wt{u}\in\M$, $$T_{\wt{u}}\M=\ker D_{\wt{u}}\subset T_{\wt{u}}\B.$$  \par

Apart from the more obvious problems that appear in Fredholm theory when passing from finite to infinite dimensional target spaces, the main difficulties arise from the presence of small divisors. Indeed, while in finite-dimensional Floer theory we always need to assume that the Hamiltonian is chosen generically such that the linearized return map along all periodic orbits has no eigenvalue equal to one, in our case we always need to assume that there is a subsequence of the infinite sequence of eigenvalues which converges to one. After introducing the notion of nondegeneracy up to small divisors, we show that the Fredholm property of the linearized Floer operator can still be established for generic choices when working with suitable but non-canonical Sobolev completions. 

\section{Fredholm property}
\label{Fredholmsection}
We start with the following observation: Since the map $\phi=\phi^A_T:\Hil\to\Hil$ has no eigenvalues equal to one, it follows as in finite dimensions that the linear map $i\partial_t: C^{\infty}_\phi(\R,\Hil)\to C^{\infty}_\phi(\R,\Hil)$ has no kernel, where $C^{\infty}_\phi(\R,\Hil)$ consists of maps $\xi:\R\to\Hil$ with $\xi(t+T)=\phi^{-1}\cdot\xi(t)$, where we set $\phi:=\phi^A_T$. Fix some natural number $k>1$. Defining $H^k_\phi(\R,\Hil)$ to be the completion of $C^{\infty}_\phi(\R,\Hil)$ with respect to the $H^k$-norm given by $$\op{\xi}_k^2=\op{\partial_t^k\xi}^2+\op{\xi}^2,\,\,\|\cdot\|=\textrm{$L^2$-norm}$$ as in finite dimensions, we wish to conclude that the extended linear map $$i\partial_t:H^k_\phi(\R,\Hil)\to H^{k-1}_\phi(\R,\Hil)$$ defines an isomorphism of Hilbert spaces. Due to the small divisor problem in infinite dimensions, this is however no longer true. \par

Recalling that the complete basis $(z_n)_{n\in\Z^N}$ of $\Hil$ consists of eigenvectors of the time-$T$ map $\phi^A_T$ with corresponding eigenvalues $e^{i\epsilon_n T}$, observe that the (densly defined) linear operator $i\partial_t$ on $L^2_\phi(\R,\Hil)$ has a complete basis for eigenfunctions given by 
\[
e_{n,m}(t)=e^{-\epsilon_nit}e^{2\pi imt/T}z_n \,\,\text{for}\,\,m\in\Z, n\in\Z^N
\]
with  
\[
\op{e_{n,m}}_k^2=\left(\frac{2\pi m}{T}-\epsilon_n\right)^{2k}+1\,\,\textrm{and}\,\, \op{i\partial_t e_{n,m}}_{k-1}^2=\left(\frac{2\pi m}{T}-\epsilon_n\right)^{2k}+\left(\frac{2\pi m}{T}-\epsilon_n\right)^2;
\]
in particular, for $m=0$ we have that \[
\op{e_{n,0}}_k^2=\epsilon_n^{2k}+1\,\,\textrm{and}\,\, \op{i\partial_t e_{n,0}}_k^2=\epsilon_n^{2k}+\epsilon_n^2.
\]
As in our examples we need to assume that there is a small divisor problem, that is, there exists a subsequence $(\epsilon_n)$ with $\epsilon_n\to 0$ as $n\to\infty$. This in turn implies that \emph{there does not exist $c>0$ with} \[c\cdot\op{e_{n,0}}_k\leq \op{i\partial_t e_{n,0}}_{k-1}\,\,\textrm{for all}\,\,n\in\N.\]

This problem however can elegantly resolved by introducing the modified Hilbert space norm 
\[ \op{\xi}_{k,\sim}:= \op{i\partial_t\xi}_{k-1}=\op{\partial_t\xi}_{k-1},\,\,\textrm{in particular,}\,\,\op{e_{n,m}}_{k,\sim}^2=\left(\frac{2\pi m}{T}-\epsilon_n\right)^{2k}+\left(\frac{2\pi m}{T}-\epsilon_n\right)^2
\] on $C^{\infty}_\phi(\R,\Hil)$. Let $\wt{H}^k_\phi(\R,\Hil)$ denote the completion of  $C^{\infty}_\phi(\R,\Hil)$ with respect to the $\op{\cdot}_{k,\sim}$-norm. Note that for every $h'\in\R$ in an analogous manner one can define the modified Hilbert space $\wt{H}^k_\phi(\R,\Hil_{h'})$.
\begin{lemma}
The modified norm $\op{\cdot}_{k,\sim}$ defines a norm on $C^{\infty}_\phi(\R,\Hil_{h'})$ and $i\partial_t:\wt{H}^k_\phi(\R,\Hil_{h'})\to H^{k-1}_\phi(\R,\Hil_{h'})$ is a bounded linear isomorphism with bounded inverse.
\end{lemma}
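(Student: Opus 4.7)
The plan is to reduce everything to the spectral picture already laid out before the statement, where the eigenfunctions $e_{n,m}$ simultaneously diagonalize $i\partial_t$ and both norms in question.

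First I would verify that $\op{\cdot}_{k,\sim}$ is a genuine norm on $C^{\infty}_\phi(\R,\Hil_{h'})$. Homogeneity and subadditivity are immediate from linearity of $i\partial_t$ and the corresponding properties of $\op{\cdot}_{k-1}$, so only definiteness needs attention. If $\op{\xi}_{k,\sim}=0$ then $i\partial_t\xi=0$, so $\xi$ is constant in $t$; the quasiperiodicity $\xi(t+T)=\phi^{-1}\xi(t)$ then forces $\xi(0)$ to be a fixed vector of $\phi^{-1}$, i.e.\ an eigenvector of $\phi=\phi^A_T$ with eigenvalue $1$. Admissibility of $(A,T)$ in the sense of \Cref{admissible-1} excludes this, hence $\xi\equiv 0$.

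Next, I would observe that $\op{i\partial_t\xi}_{k-1}=\op{\xi}_{k,\sim}$ holds by definition of the modified norm, so $i\partial_t$ is an isometric linear map on the dense subspace $C^{\infty}_\phi(\R,\Hil_{h'})$ and hence extends uniquely to a bounded linear isometry
\[
 i\partial_t:\wt{H}^k_\phi(\R,\Hil_{h'})\to H^{k-1}_\phi(\R,\Hil_{h'}).
\]
Boundedness (with operator norm $1$) and injectivity are immediate from the isometry property, and the image is automatically closed.

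It remains to establish surjectivity, and this is where the spectral decomposition is used. Using the complete basis of eigenfunctions
\[
 e_{n,m}(t)=e^{-\epsilon_n it}e^{2\pi imt/T}z_n,\qquad i\partial_t e_{n,m}=-\bigl(\tfrac{2\pi m}{T}-\epsilon_n\bigr)e_{n,m},
\]
the coefficients $\tfrac{2\pi m}{T}-\epsilon_n$ are uniformly bounded away from zero on each ``mode'': for $m\neq 0$ the elementary bound $\bigl|\tfrac{2\pi m}{T}-\epsilon_n\bigr|\geq \pi/T$ holds because $\epsilon_n\in(-\pi/T,\pi/T]$, while for $m=0$ admissibility gives $|\epsilon_n|> c_h\theta_n^{-h}>0$. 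Consequently $i\partial_t$ maps the linear span of $\{e_{n,m}\}$ bijectively onto itself, and this span is dense in $H^{k-1}_\phi(\R,\Hil_{h'})$ (the Hilbert scale structure on $\Hil$ and the quasi-periodicity encoded in the twist factor $e^{-\epsilon_n it}$ ensure that this is the standard Fourier basis of the target space). Combined with the closed range obtained from the isometry step, this forces the range of $i\partial_t$ to be all of $H^{k-1}_\phi(\R,\Hil_{h'})$. Since $i\partial_t$ is a norm-preserving bijection, its inverse is automatically bounded (in fact an isometry), completing the proof.

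The only mildly delicate step is the last one: ensuring that none of the spectral weights $\tfrac{2\pi m}{T}-\epsilon_n$ vanishes and that the eigenbasis really is dense in $H^{k-1}_\phi(\R,\Hil_{h'})$. Both rest squarely on the admissibility hypothesis (which supplies $\epsilon_n\neq 0$ and hence $\phi$ has no eigenvalue $1$) and on the explicit presentation of the Hilbert scale via the Darboux basis $(z_n)$; no small-divisor issue enters here, because the modified norm $\op{\cdot}_{k,\sim}$ has been tailored precisely so that $i\partial_t$ becomes an isometry and the problematic factors $(\tfrac{2\pi m}{T}-\epsilon_n)$ are absorbed into the norm itself rather than appearing in the inverse.
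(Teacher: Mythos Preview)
Your proof is correct and follows essentially the same approach as the paper: definiteness of $\op{\cdot}_{k,\sim}$ comes from $\phi^A_T$ having no eigenvalue equal to one, and the isomorphism statement is immediate from the fact that $i\partial_t$ is an isometry by construction. The paper's own argument is terser and leaves the surjectivity implicit in the word ``isometry''; your explicit verification via the eigenbasis $(e_{n,m})$ and the nonvanishing of $\tfrac{2\pi m}{T}-\epsilon_n$ fills in exactly what the paper takes for granted.
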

\begin{proof} The fact that $\op{\cdot}_{k,\sim}$ is indeed a norm follows from the observation that $i\partial_t$ has no kernel on $C^{\infty}_\phi(\R,\Hil_{h'})$ which itself resulted from the fact that the linear symplectomorphism $\phi=\phi^A_T$ has no eigenvalue equal to one. The second statement follows from the enhanced statement that $i\partial_t$ even defines an isometry between $\wt{H}^k_\phi(\R,\Hil_{h'})$ and $H^{k-1}_\phi(\R,\Hil_{h'})$.
\end{proof}
In order to understand the relation between the different Hilbert spaces, we have the following lemma.
\begin{lemma}\label{contained-1}
For every $h'\in\R$ we have the inclusions  $H^k_\phi(\R,\Hil_{h'})\subset\wt{H}^k_\phi(\R,\Hil_{h'})$ and $\wt{H}^k_\phi(\R,\Hil_{h'})\subset H^k_\phi(\R,\Hil_{h'-h''})$ for every $h''>h_0$. In particular, $\wt{H}^k_\phi(\R,\Hil_{h'})\subset H^k_\phi(\R,\Hil)$ as long as $h'>h_0$.
\end{lemma}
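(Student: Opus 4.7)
The plan is to work directly in the eigenbasis $\{e_{n,m}\}_{n\in\Z^N,m\in\Z}$ for $i\partial_t$. After expanding $\xi=\sum_{n,m}a_{n,m}e_{n,m}$, each of the three Hilbert spaces becomes a weighted $\ell^2$-space in the coefficients $a_{n,m}$, so it suffices to compare the weights modewise. With $\mu_{n,m}:=\tfrac{2\pi m}{T}-\epsilon_n$, using $\op{z_n}_{h'}=\theta_n^{h'}$ on the symplectic Hilbert scale, and up to a common normalisation constant coming from the $L^2_\phi$-norm of $e_{n,m}$, one reads off
\[
\op{e_{n,m}}_{H^k_\phi(\R,\Hil_{h'})}^2 \,\simeq\, \theta_n^{2h'}\bigl(\mu_{n,m}^{2k}+1\bigr),\quad \op{e_{n,m}}_{\wt{H}^k_\phi(\R,\Hil_{h'})}^2 \,\simeq\, \theta_n^{2h'}\bigl(\mu_{n,m}^{2k}+\mu_{n,m}^2\bigr),
\]
with implied constants independent of $(n,m)$. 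Hence both inclusions reduce to modewise comparisons of these weights.

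The first inclusion follows from the elementary bound $x^2\leq x^{2k}+1$ (valid for $k\geq 1$): this yields $\mu_{n,m}^{2k}+\mu_{n,m}^2\leq 2(\mu_{n,m}^{2k}+1)$ for every $(n,m)$, whence $\op{\xi}_{k,\sim}^2\leq 2\op{\xi}_k^2$ on the common dense subspace $C^\infty_\phi(\R,\Hil_{h'})$, and completion gives $H^k_\phi(\R,\Hil_{h'})\subset\wt{H}^k_\phi(\R,\Hil_{h'})$.

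For the second inclusion I would cancel the shared factor $\theta_n^{2h'}$ and split on $m$. When $m\neq 0$, the bound $|\epsilon_n|\leq\pi/T$ forces $|\mu_{n,m}|\geq\pi/T$, so an elementary comparison of $\mu_{n,m}^{2k}+1$ and $\mu_{n,m}^{2k}+\mu_{n,m}^2$ produces a constant $C_1=C_1(T,k)$ with the needed inequality, and the factor $\theta_n^{-2h''}$ is harmless since $\theta_n\to\infty$. The delicate case is $m=0$, where $\mu_{n,0}=-\epsilon_n$ can be arbitrarily close to zero along a subsequence. Here one invokes the admissibility condition of \Cref{admissible-1}: for $h''>h_0$ it provides $c=c_{h''}>0$ with $\epsilon_n^2>c^2\theta_n^{-2h''}$, and combined with $|\epsilon_n|\leq\pi/T$ this gives
\[
\theta_n^{-2h''}\bigl(\epsilon_n^{2k}+1\bigr)\,\leq\, c^{-2}\epsilon_n^2\bigl(\epsilon_n^{2k}+1\bigr)\,\leq\, c^{-2}\bigl(1+(\pi/T)^{2k}\bigr)\bigl(\epsilon_n^{2k}+\epsilon_n^2\bigr),
\]
which is exactly the missing modewise estimate. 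The ``in particular'' statement then follows at once by choosing $h''=h'>h_0$, so that $\Hil_{h'-h''}=\Hil$.

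The main obstacle is precisely this $m=0$ regime: it is the sole point where the quantitative small-divisor bound $|\epsilon_n|>c\theta_n^{-h''}$ coming from admissibility is actually used, and without it the subsequence $\epsilon_n\to 0$ would destroy the second inclusion for any finite value of $h''$. Everything else in the argument is soft and modewise.
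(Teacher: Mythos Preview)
Your argument is correct and follows essentially the same modewise comparison in the eigenbasis $\{e_{n,m}\}$ as the paper. The only cosmetic difference is that the paper avoids your case split on $m$: since $|\mu_{n,m}|\geq|\epsilon_n|$ for \emph{every} $m$ (the minimum over $m$ is attained at $m=0$), the admissibility bound $|z_n|_{h'}|\epsilon_n|>c\,|z_n|_{h'-h''}$ handles all modes at once by comparing the two summands term by term.
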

\begin{proof} The first inclusion immediately follows from the fact that the map $i\partial_t:H^k_\phi(\R,\Hil_{h'})\to H^{k-1}_\phi(\R,\Hil_{h'})$ is bounded. For the second inclusion observe that the basis element $e_{n,m}$ has squared norm
\[|z_n|^2_{h'-h''}\left(\frac{2\pi m}{T}-\epsilon_n\right)^{2k}+|z_n|^2_{h'-h''}\,\,\textrm{in}\,\,H^k_\phi(\R,\Hil_{h'-h''})\]
and
\[|z_n|^2_{h'}\left(\frac{2\pi m}{T}-\epsilon_n\right)^{2k}+|z_n|^2_{h'}\left(\frac{2\pi m}{T}-\epsilon_n\right)^2\,\,\textrm{in}\,\,\wt{H}^k_\phi(\R,\Hil_{h'}).\] Since by \Cref{admissible-1} we know that $|\epsilon_n|>c\theta_n^{-h''}$, it follows that $|z_n|_{h'}|\epsilon_n|>c|z_n|_{h'-h''}$, which is enough to see that the second embedding is bounded. \end{proof}
Since we are interested in studying Floer curves with image in $M\times\Hil$, from now on we assume that $h'>h_0$.\par
For later use, we already mention the following useful result.
\begin{lemma}
\label{compactembedding1}
The space $H^k_\phi(\R,\Hil_{h})$ embeds compactly into $H^{k-1}_\phi(\R,\Hil_{h'})$ whenever $h>h'$.
\end{lemma}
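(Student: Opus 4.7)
The plan is to use the eigenbasis of $i\partial_t$ to realise the embedding as a \emph{diagonal} operator between two weighted $\ell^2$-spaces and to verify that the diagonal entries tend to zero; compactness will then fall out as a norm limit of finite-rank truncations.

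I would start from the orthogonal system $e_{n,m}(t)=e^{-\epsilon_n it}e^{2\pi imt/T}z_n$ already used earlier in the paper. Since $e_{n,m}$ is an eigenvector of $i\partial_t$ with eigenvalue $-(2\pi m/T-\epsilon_n)$, a computation parallel to the one carried out above for $h''=0$ gives, for every $h''\in\R$,
\[ \|e_{n,m}\|_{H^k_\phi(\R,\Hil_{h''})}^2 \;=\; \theta_n^{2h''}\Bigl(1+\bigl(\tfrac{2\pi m}{T}-\epsilon_n\bigr)^{2k}\Bigr). \]
Moreover $\{e_{n,m}\}$ is complete in each such Sobolev space, since the twisted Fourier transform $\xi\mapsto(\hat\xi(n,m))$ identifies $H^k_\phi(\R,\Hil_{h''})$ isometrically with a weighted $\ell^2$-space whose weights are exactly the right-hand side above. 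Under this identification the embedding $H^k_\phi(\R,\Hil_{h})\hookrightarrow H^{k-1}_\phi(\R,\Hil_{h'})$ becomes a diagonal operator with entries
\[ \lambda_{n,m}^2\;=\;\theta_n^{-2(h-h')}\cdot\frac{1+(\tfrac{2\pi m}{T}-\epsilon_n)^{2(k-1)}}{1+(\tfrac{2\pi m}{T}-\epsilon_n)^{2k}}. \]

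I would then show that for every $\varepsilon>0$ only finitely many $\lambda_{n,m}$ exceed $\varepsilon$. The second factor is bounded by $1$, so $\lambda_{n,m}^2\le\theta_n^{-2(h-h')}$; since the scale embedding $\Hil_h\hookrightarrow\Hil_{h'}$ is compact with $h>h'$, the weights $\theta_n$ must tend to infinity, which confines the admissible $n$ to a finite set $N_\varepsilon$. For each such fixed $n$, the assumption $k>1$ makes the second factor decay like $(2\pi m/T-\epsilon_n)^{-2}$ as $|m|\to\infty$, restricting $m$ to a finite set as well. Truncating to the finite set $\{(n,m):\lambda_{n,m}>\varepsilon\}$ then yields a finite-rank operator approximating the embedding to within $\varepsilon$ in operator norm, so the embedding is compact.

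The main pitfall I anticipate is the completeness claim for $\{e_{n,m}\}$ in $H^k_\phi(\R,\Hil_{h''})$: the twisted periodicity $\xi(t+T)=\phi^{-1}\xi(t)$ interacts nontrivially with the $\Hil_{h''}$-valued Fourier expansion through the phase $e^{-\epsilon_n it}$, and one has to verify that this phase correctly absorbs the twist mode by mode. Once this is in hand the rest of the argument is elementary, reducing to the two-parameter decay of $\theta_n^{-2(h-h')}$ and of the rational function $(1+x^{2(k-1)})/(1+x^{2k})$.
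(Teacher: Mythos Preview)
Your proof is correct and follows the same approach as the paper: both diagonalize the embedding via the eigenbasis $(e_{n,m})$ of $i\partial_t$ and check that the ratio of norms tends to zero as $(n,m)\to\infty$. Your argument is in fact more complete---the paper records only a single norm inequality exhibiting the factor $\theta_n^{-2(h-h')}$ and leaves the decay in $m$ and the finite-rank approximation implicit.
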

We assume that, when $\Hil$ is replaced by $\R^{2n}$ and hence $\Hil_h=\Hil_{h'}=\R^{2n}$, this result is well-known to the reader. While $\Hil$, in contrast to $\R^{2n}$, no longer embeds compactly into itself, we instead use that $\Hil_h$ embeds compactly in $\Hil_{h'}$ for every $h>h'$. 
\begin{proof}
For the proof it suffices to observe that for the norms of each basis element $e_{n,m}$ in $H^k_\phi(\R,\Hil_{h})$ and $H^{k-1}_\phi(\R,\Hil_{h'})$ one has for all $n\in\Z^N,m\in\Z$ that
\[
|z_n|^2_{h'}\left(\frac{2\pi m}{T}-\epsilon_n\right)^{2k-2}+|z_n|^2_{h'}\leq \theta_n^{-2h+2h'}\cdot \left(|z_n|^2_h\left(\frac{2\pi m}{T}-\epsilon_n\right)^{2k}+|z_n|^2_h\right).
\]
\end{proof} 

For establishing the Fredholm property of the linearized Cauchy-Riemann operator $D_{\wt{u}}$ for every Floer curve $\wt{u}$ connecting $\phi^A_T$-periodic orbits $u^+$ and $u^-$, as in finite-dimensional Floer homology theory we first need to impose a suitable notion of nondegeneracy of the asymptotic orbits. Based on the introduction of the modified Hilbert space $\wt{H}^k_\phi(\R,\Hil)$ we give a modified definition of nondegeneracy. \par

\emph{From now on fix $h'\in\R$ with $(2d\leq) h_0< h'<h$.} Let $u=(u_M,u_{\Hil})\in H^k(S^1,M)\times\wt{H}^k_\phi(\R,\Hil_{h'})$ be a $\phi^A_T$-periodic solution to the Hamiltonian equation $\partial_tu=X^G_t(u)$, that is, of $J(u)\partial_tu=-\nabla G_t(u)$. Since $F_t$ is $h$-regularizing in the sense of \Cref{admissible-2}, we know by \Cref{gradient+Hessian} that the gradient $\nabla G(t,u)=\nabla G_t(u)$ of $G_t=F_t\circ\phi^A_{-t}$ defines a $\floor{h/d}$-times continuously differentiable map $\nabla G:\R\times\Hil\to\Hil_{+h}$. In order to ensure that the gradient $u\mapsto\nabla G_t(u)$ defines a section in the Hilbert space bundle over $H^k(S^1,M)\times H^k_\phi(\R,\Hil)$ with fibre $H^k(u_M^*TM)\times H^k_\phi(\R,\Hil_{+h})$ using \cite[Prop. B.1.19]{MDSa}, \emph{from now on we will assume that $1<k\leq\floor{h/d}$,} where we use that $\floor{h/d}\geq 2$ as $h>h_0\geq 2d$ by definition. Since $\wt{H}^k_\phi(\R,\Hil_{h'})\subset H^k_\phi(\R,\Hil)$ and $H^k_\phi(\R,\Hil_{+h})\subset H^{k-1}_\phi(\R,\Hil_{h'})$ as long as $h_0 < h'< h$, we find that the Hamilton operator $u\mapsto J(u)\partial_t u+\nabla G_t(u)$ defines a section in the Hilbert space bundle over $H^k(S^1,M)\times \wt{H}^k_\phi(\R,\Hil_{h'})$ with fibre $H^{k-1}(u_M^*TM)\times H^{k-1}_\phi(\R,\Hil_{h'})$. Linearizing the Hamilton operator at $u\in H^k(S^1,M)\times \wt{H}^k_\phi(\R,\Hil_{h'})$, we obtain a linear operator from $H^k(u_M^*TM)\oplus \widetilde{H}^k_{\phi}(\R,\Hil_{h'})$ to $H^{k-1}(u_M^*TM)\oplus H^{k-1}_{\phi}(\R,\Hil_{h'})$ which, after choosing a unitary trivialization of $u_M^*TM\to S^1$, is given by a linear operator \[i\partial_t+S_u: H^k(S^1,\R^{\dim M})\oplus\widetilde{H}^k_{\phi}(\R,\Hil_{h'})\to H^{k-1}(S^1,\R^{\dim M})\oplus H^{k-1}_{\phi}(\R,\Hil_{h'}).\] Here $S_u$ is a multiplication operator defined pointwise by $S_u(t)\in (\R^{\dim M}\oplus\Hil_{-h})^*\otimes (\R^{\dim M}\oplus\Hil_{+h})$, given by the Hessian $\nabla^2 G_t(u(t))\in (T_{u_M(t)}M\oplus\Hil_{-h})^*\otimes (T_{u_M(t)}M\oplus\Hil_{+h})$ plus extra terms depending on the almost complex structure $J_M$ on $M$ and the chosen unitary trivialization of $u_M^*TM\to S^1$.\par
In order to prove that $i\partial_t+S_u$ indeed defines a bounded linear operator from $H^k(S^1,\R^{\dim M})\oplus\widetilde{H}^k_{\phi}(\R,\Hil_{h'})$ to $H^{k-1}(S^1,\R^{\dim M})\oplus H^{k-1}_{\phi}(\R,\Hil_{h'})$ and to show the independence on the choice of $0\leq h'<h$ and $1<k\leq\floor{h/d}$, we start with the following regularity result. 
\begin{lemma}
\label{Hklemma1}
Every $\phi^A_T$-periodic orbit $u$ of $G_t$ is an element of $H^{\floor{h/d}+1}(S^1,M)\times\wt{H}^{\floor{h/d}+1}_\phi(\R,\Hil_{h})$ with $\widetilde{H}^{\floor{h/d}+1}_{\phi}(\R,\Hil_{h})\subset H^{\floor{h/d}+1}_{\phi}(\R,\Hil_{h-h''})$ for every $h''>h_0$, in particular, $S_u$ defines a bounded linear operator from $H^k(S^1,\R^{\dim M})\oplus H^k_\phi(\R,\Hil_{-h})$ to $H^k(S^1,\R^{\dim M})\oplus H^k_\phi(\R,\Hil_{+h})$ when $1<k\leq\floor{h/d}$.
\end{lemma}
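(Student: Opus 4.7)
The plan is to bootstrap regularity of $u$ from the Hamiltonian equation $\partial_tu=X^G_t(u)$ and then derive boundedness of $S_u$ from ordinary Sobolev multiplication on the compact circle. The key input is \Cref{gradient+Hessian}: $\nabla G$ and $\nabla^2 G$ are $\floor{h/d}$-times continuously differentiable and take values in $\Hil_{+h}$, respectively $\Hil_{-h}^*\otimes\Hil_{+h}$, so each differentiation of the equation in $t$ costs one derivative but automatically pushes the right-hand side up to scale $+h$.

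\emph{Bootstrap.} The $\Hil$-component of $\partial_tu=X^G_t(u)$ reads $\partial_tu_{\Hil}=J_{\Hil}\nabla_{\Hil}G_t(u)$, and by \Cref{gradient+Hessian} the right-hand side is continuous in $t$ with values in $\Hil_{+h}$. Differentiating this identity and applying the chain rule iteratively -- each time using \Cref{gradient+Hessian} to control the resulting factor $\nabla^{j+1}G\circ u$ -- one obtains $\partial_t^ju_{\Hil}\in C^0(\R,\Hil_{+h})$ for every $1\leq j\leq\floor{h/d}+1$; an analogous argument on the finite-dimensional factor yields $u_M\in C^{\floor{h/d}+1}(S^1,M)\subset H^{\floor{h/d}+1}(S^1,M)$. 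Since $\phi^A_T$ is an isometry of every $\Hil_h$, continuity on the compact fundamental domain of the $\phi$-twisted periodicity promotes continuity to $L^2$, giving $\partial_tu_{\Hil}\in H^{\floor{h/d}}_\phi(\R,\Hil_h)$, which by the defining identity $\op{\xi}_{k,\sim}=\op{\partial_t\xi}_{k-1}$ is exactly the claim $u_{\Hil}\in\wt{H}^{\floor{h/d}+1}_\phi(\R,\Hil_h)$.

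\emph{Inclusion.} The embedding $\wt{H}^{\floor{h/d}+1}_\phi(\R,\Hil_h)\subset H^{\floor{h/d}+1}_\phi(\R,\Hil_{h-h''})$ for every $h''>h_0$ is the second inclusion of \Cref{contained-1} applied with $h'=h$, whose proof relies only on the small-divisor bound $|\epsilon_n|>c\theta_n^{-h''}$ from \Cref{admissible-1}.

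\emph{Boundedness of $S_u$.} Pointwise $S_u(t)$ equals $\nabla^2 G_t(u(t))$ plus finitely many lower-order terms built from $J_M$, the chosen unitary trivialization and $u_M$; each summand sends $\R^{\dim M}\oplus\Hil_{-h}$ boundedly into $\R^{\dim M}\oplus\Hil_{+h}$. Composing the $C^{\floor{h/d}}$ map $\nabla^2G$ with the $C^{\floor{h/d}+1}$ curve $u$ shows that $t\mapsto S_u(t)$ lies in $C^{\floor{h/d}}(S^1,L(\R^{\dim M}\oplus\Hil_{-h},\R^{\dim M}\oplus\Hil_{+h}))$. For $1<k\leq\floor{h/d}$ the Leibniz expansion
\[ \partial_t^k(S_u\xi)=\sum_{j=0}^k\binom{k}{j}(\partial_t^jS_u)\,(\partial_t^{k-j}\xi) \]
combined with $L^\infty$-bounds on $\partial_t^jS_u$ over $S^1$ and $L^2$-bounds on $\partial_t^{k-j}\xi$ yields the required bounded map $H^k(S^1,\R^{\dim M})\oplus H^k_\phi(\R,\Hil_{-h})\to H^k(S^1,\R^{\dim M})\oplus H^k_\phi(\R,\Hil_{+h})$. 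The main obstacle throughout is purely bookkeeping: tracking the scale index of each successive derivative and verifying that the regularity budget $\floor{h/d}$ of $\nabla G$ exactly matches the $\floor{h/d}+1$ derivatives of $u$ produced by the equation; no estimate beyond \Cref{gradient+Hessian}, \Cref{admissible-1} and \Cref{contained-1} is required.
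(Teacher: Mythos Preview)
Your argument is correct and reaches the same conclusion as the paper, but the bootstrap is organized differently. The paper iterates in the Sobolev scale: starting from $u\in H^k(S^1,M)\times\wt{H}^k_\phi(\R,\Hil_{h'})\subset H^k(S^1,M)\times H^k_\phi(\R,\Hil)$ it uses \cite[Prop.~B.1.19]{MDSa} on composition to place $\nabla G_t(u)$ in $H^k_\phi(\R,\Hil_{+h})$, then invokes the isomorphism $i\partial_t:\wt{H}^{k+1}_\phi(\R,\Hil_{+h})\to H^k_\phi(\R,\Hil_{+h})$ to gain one Sobolev derivative, and repeats until $k=\floor{h/d}$. For the boundedness of $S_u$ the paper again cites \cite[Prop.~B.1.19]{MDSa}, this time for products. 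Your route is more elementary: you differentiate the ODE directly in $C^j$, using only that $\nabla G$ is $C^{\floor{h/d}}$ with values in $\Hil_{+h}$, and then pass from $C^0$ on the compact fundamental domain to $L^2$ at the end; likewise you replace the abstract product lemma by an explicit Leibniz expansion with $L^\infty\cdot L^2$ estimates. What your approach buys is that it avoids the Sobolev composition/product machinery from \cite{MDSa} entirely; what the paper's approach buys is that it stays inside the same $H^k/\wt{H}^k$ framework used throughout the rest of \Cref{Fredholmsection}, so the regularity statement for $S_u$ in $H^k_\phi(\R,(\R^{\dim M}\oplus\Hil_{-h})^*\otimes(\R^{\dim M}\oplus\Hil_{+h}))$ (needed later in \Cref{fdlemma}) comes out directly. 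One small point: when you write ``using \Cref{gradient+Hessian} to control the resulting factor $\nabla^{j+1}G\circ u$'', note that \Cref{gradient+Hessian} literally only addresses $\nabla G$ and $\nabla^2 G$; the higher $u$-derivatives you need are of course implicit in the $C^{\floor{h/d}}$-regularity of $\nabla G$ stated there, but it would be cleaner to say so.
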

\begin{proof}
From $u\in H^k(S^1,M)\times\wt{H}^k_\phi(\R,\Hil_{h'})\subset H^k(S^1,M)\times H^k_\phi(\R,\Hil)$ it follows that $\nabla G_t(u)\in H^k(u_M^*TM)\oplus H^k_\phi(\R,\Hil_{+h})$ which in turn gives $J(u)\partial_t u\in H^k(u_M^*TM)\oplus H^k_\phi(\R,\Hil_{+h})$. Using the fact that $i\partial_t$ defines a bijection between $\wt{H}^{k+1}_\phi(\R,\Hil_{+h})$ and $H^k_\phi(\R,\Hil_{+h})$, we find that $u\in H^{k+1}(S^1,M)\times\wt{H}^{k+1}_\phi(\R,\Hil_{+h})$ with $\wt{H}^{k+1}_\phi(\R,\Hil_{+h})\subset H^{k+1}_\phi(\R,\Hil_{+h-h''})$ as long as $h''>h_0$ by \Cref{contained-1}. This reasoning can be iterated as long as $k\leq\floor{h/d}$. Now we can use \cite[Prop. B.1.19]{MDSa} on composition to conclude that the $H^k_\phi(\R,(\R^{\dim M}\oplus\Hil_{-h})^*\otimes(\R^{\dim M}\oplus\Hil_{+h}))$-norm of $S_u$ is finite as long as $k\leq\floor{h/d}$. Finally employing \cite[Prop. B.1.19]{MDSa} on products, we find that $S_u$ defines a bounded linear operator from $H^k_\phi(\R,\R^{\dim M}\oplus\Hil_{-h})$ to $H^k_\phi(\R,\R^{\dim M}\oplus\Hil_{+h})$ as claimed.
\end{proof}
Note that, after setting $h''=\frac{1}{2}(h+h_0)$, we find that $u\in C^{\floor{h/d}}_\phi(\R,M\times\Hil_{\frac{1}{2}(h-h_0)})$; in particular, $u\in C^{\infty}_\phi(\R,M\times\Hil_\infty)$ in the case when $G$ is $h$-regularizing for \emph{all} $h>h_0$.\par 
Since the embedding $H^k_\phi(\R,\Hil_h)\subset H^{k-1}_\phi(\R,\Hil_{h'})$ is compact as long as $h'<h$, we can even deduce

\begin{lemma} 
$S_u:H^k(S^1,\R^{\dim M})\oplus\wt{H}^k_\phi(\R,\Hil_{h'})\to H^{k-1}(S^1,\R^{\dim M})\oplus H^{k-1}_\phi(\R,\Hil_{h'})$ is a compact operator.
\end{lemma}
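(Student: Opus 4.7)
The plan is to factor $S_u$ as a composition of a bounded operator with a compact operator, invoking only the previous lemma (which gives the regularity gain $\Hil_{-h}\to\Hil_{+h}$ of $S_u$) together with Lemma \ref{contained-1} and Lemma \ref{compactembedding1}.

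First, I would observe that the claim reduces to showing compactness of the $\Hil$-factor, since on the finite-dimensional factor the operator $v\mapsto S_u\cdot v$ sends $H^k(S^1,\R^{\dim M})$ into itself boundedly (it is multiplication by a bounded function on $S^1$ arising from $\nabla^2 G_t(u(t))$, which by \Cref{Hklemma1} is at least in $H^k$) and the Sobolev embedding $H^k(S^1,\R^{\dim M})\hookrightarrow H^{k-1}(S^1,\R^{\dim M})$ is compact by the classical Rellich theorem.

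Next, for the $\Hil$-factor I would exhibit the factorisation
\[
\wt{H}^k_\phi(\R,\Hil_{h'})\;\xrightarrow{\iota_1}\; H^k_\phi(\R,\Hil)\;\xrightarrow{S_u}\; H^k_\phi(\R,\Hil_{+h})\;\xrightarrow{\iota_2}\; H^{k-1}_\phi(\R,\Hil_{h'}).
\]
The first arrow $\iota_1$ is bounded because $h'>h_0$, by \Cref{contained-1}. Noting that $\Hil=\Hil_0\subset\Hil_{-h}$ continuously (since $h>0$), so that $H^k_\phi(\R,\Hil)\hookrightarrow H^k_\phi(\R,\Hil_{-h})$ is bounded, the middle arrow $S_u$ is bounded by the previous \Cref{Hklemma1} under the standing assumption $1<k\leq\floor{h/d}$. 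Finally, $\iota_2$ is the compact embedding supplied by \Cref{compactembedding1}, which applies precisely because $h>h'$. Since a composition of bounded operators with a compact operator is compact, the $\Hil$-component of $S_u$ is compact as a map from $\wt{H}^k_\phi(\R,\Hil_{h'})$ into $H^{k-1}_\phi(\R,\Hil_{h'})$.

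Combining the two factors gives the claim. There is essentially no obstacle here beyond bookkeeping of the scales: the only point that requires care is verifying that the hypotheses $h_0<h'<h$ and $1<k\leq\floor{h/d}$ imposed earlier in the section are exactly what is needed to make the chain of inclusions $\wt{H}^k_\phi(\R,\Hil_{h'})\subset H^k_\phi(\R,\Hil)\subset H^k_\phi(\R,\Hil_{-h})$ and the compact embedding $H^k_\phi(\R,\Hil_{+h})\subset H^{k-1}_\phi(\R,\Hil_{h'})$ simultaneously valid, and this is immediate from the definitions.
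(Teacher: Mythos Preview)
Your argument is correct and is exactly the approach the paper indicates: factor $S_u$ through $H^k(S^1,\R^{\dim M})\oplus H^k_\phi(\R,\Hil_{+h})$ using \Cref{Hklemma1}, then apply the compact embedding into $H^{k-1}(S^1,\R^{\dim M})\oplus H^{k-1}_\phi(\R,\Hil_{h'})$ (Rellich on the first summand, \Cref{compactembedding1} on the second). One small wrinkle in your write-up: your opening ``reduction to the $\Hil$-factor'' tacitly treats $S_u$ as block-diagonal, whereas the Hessian $\nabla^2 G_t(u(t))$ in general couples $T_{u_M(t)}M$ and $\Hil$; this is harmless, since \Cref{Hklemma1} already gives boundedness of the \emph{full} operator into $H^k(S^1,\R^{\dim M})\oplus H^k_\phi(\R,\Hil_{+h})$, so the factorisation goes through without splitting into blocks.
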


There clearly also exists a corresponding statement for the linearization. 
\begin{proposition}\label{linear-regularity-1}
The kernel of the linearized operator $i\partial_t+S_u:\wt{H}^k_\phi(\R,\R^{\dim M}\oplus\Hil_{h'})=H^k(S^1,\R^{\dim M})\oplus\wt{H}^k_\phi(\R,\Hil_{h'})\to H^{k-1}_\phi(\R,\R^{\dim M}\oplus\Hil_{h'})$ is contained in 
$H^{\floor{h/d}+1}(S^1,M)\oplus\wt{H}^{\floor{h/d}+1}_\phi(\R,\Hil_h)$. \end{proposition}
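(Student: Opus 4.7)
The plan is to mimic the bootstrap argument used in the proof of Lemma \ref{Hklemma1} for the nonlinear Hamilton equation, but applied to the \emph{linear} equation $i\partial_t \xi = -S_u \xi$ that every element $\xi$ of $\ker(i\partial_t+S_u)$ must satisfy. The heart of the argument is the repeated interplay of three ingredients: (i) the embedding $\wt{H}^{\ell}_\phi(\R,\Hil_{h'})\subset H^{\ell}_\phi(\R,\Hil)$ for $h'>h_0$ from \Cref{contained-1}, (ii) the fact from \Cref{Hklemma1} that $S_u$ is a bounded multiplication operator $H^{\ell}_\phi(\R,\R^{\dim M}\oplus\Hil_{-h})\to H^{\ell}_\phi(\R,\R^{\dim M}\oplus\Hil_{+h})$ for $1<\ell\le\floor{h/d}$, and (iii) the fact that $i\partial_t$ is an isometric isomorphism $\wt{H}^{\ell+1}_\phi(\R,\Hil_{+h})\to H^{\ell}_\phi(\R,\Hil_{+h})$.

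First I would take $\xi\in\ker(i\partial_t+S_u)\subset H^k(S^1,\R^{\dim M})\oplus\wt{H}^k_\phi(\R,\Hil_{h'})$ with $h'>h_0$. By \Cref{contained-1} we have $\xi\in H^k_\phi(\R,\R^{\dim M}\oplus\Hil)\subset H^k_\phi(\R,\R^{\dim M}\oplus\Hil_{-h})$, so ingredient (ii) gives $S_u\xi\in H^k_\phi(\R,\R^{\dim M}\oplus\Hil_{+h})$. The equation $i\partial_t\xi=-S_u\xi$ together with ingredient (iii) then upgrades $\xi$ to $H^{k+1}(S^1,\R^{\dim M})\oplus\wt{H}^{k+1}_\phi(\R,\Hil_{+h})$. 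The $M$-component is handled in parallel using elliptic regularity of $i\partial_t$ on $H^{\ell}(S^1,\R^{\dim M})$, which is the standard finite-dimensional case.

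To iterate, note that once we have reached $\xi\in\wt{H}^{\ell}_\phi(\R,\Hil_{+h})$ for some $\ell\ge k+1$, \Cref{contained-1} applied with, say, $h''=\tfrac12(h+h_0)$ yields $\xi\in H^{\ell}_\phi(\R,\Hil_{(h-h_0)/2})$, which in particular embeds into $H^{\ell}_\phi(\R,\Hil_{-h})$, allowing us to apply $S_u$ again and invert $i\partial_t$ to gain one more derivative. This raises $\ell$ by one at each step and keeps the target values in $\Hil_{+h}$. The iteration can be continued exactly as long as $\ell\le\floor{h/d}$, which is the maximal level at which \Cref{Hklemma1} guarantees $S_u$ is bounded; this is because the multiplication operator $S_u$ is only $\floor{h/d}$-times differentiable in $t$, the regularity being inherited from $\nabla^2 G_t$ via \Cref{gradient+Hessian}. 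Stopping the iteration at $\ell=\floor{h/d}+1$ therefore yields exactly the desired conclusion $\xi\in H^{\floor{h/d}+1}(S^1,\R^{\dim M})\oplus\wt{H}^{\floor{h/d}+1}_\phi(\R,\Hil_{h})$.

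The main obstacle I anticipate is purely book-keeping rather than conceptual: one must carefully verify at each iteration that the loss $h''>h_0$ incurred when passing from $\wt{H}^{\ell}_\phi(\R,\Hil_{+h})$ back to an ordinary $H^{\ell}_\phi$-space on which $S_u$ is defined is compatible with the hypothesis $h>h_0$, and that the $\R^{\dim M}$-component, obtained after trivializing $u_M^*TM$, does not introduce additional regularity constraints beyond those already encoded in the bound $k\le\floor{h/d}$. Both issues are resolved by the same choice $h''\in(h_0,h-h_0)$ used in \Cref{Hklemma1}, so no genuinely new ideas are required beyond linearizing the bootstrap already carried out for the orbit $u$ itself.
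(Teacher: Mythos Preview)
Your proposal is correct and follows essentially the same bootstrap argument as the paper: starting from $\xi\in\wt{H}^k_\phi(\R,\R^{\dim M}\oplus\Hil_{h'})\subset H^k_\phi(\R,\R^{\dim M}\oplus\Hil_{-h})$, apply $S_u$ to land in $H^k_\phi(\R,\R^{\dim M}\oplus\Hil_{+h})$, use $i\partial_t\xi=-S_u\xi$ together with the isomorphism $i\partial_t:\wt{H}^{k+1}_\phi\to H^k_\phi$ to gain one derivative, then iterate via \Cref{contained-1} as long as $k\le\floor{h/d}$. The paper's proof is more terse but the logic and the sequence of lemmas invoked are identical.
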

\begin{proof} From $\xi\in\wt{H}^k_\phi(\R,\R^{\dim M}\oplus\Hil_{h'})\subset H^k_\phi(\R,\R^{\dim M}\oplus\Hil_{-h})$ it follows that $S_u\cdot\xi\in H^k_\phi(\R,\R^{\dim M}\oplus\Hil_{+h})$ which in turn gives $i\partial_t\xi\in H^k_\phi(\R,\R^{\dim M}\oplus\Hil_{+h})$. Using the fact that $i\partial_t$ defines a bijection between $\wt{H}^{k+1}_\phi(\R,\Hil_{+h})$ and $H^k_\phi(\R,\Hil_{+h})$, we find that $\xi\in \wt{H}^{k+1}_\phi(\R,\R^{\dim M}\oplus\Hil_{+h})\subset H^{k+1}_\phi(\R,\R^{\dim M}\oplus\Hil_{+h-h''})$ for every $h''>h_0$ by \Cref{contained-1}. This reasoning can be iterated as long as $k\leq\floor{h/d}$. \end{proof}

The following definition is the natural adaption of the concept of nondegeneracy from finite-dimensional Floer theory.
\begin{definition}\label{nondegenerate}
A $\phi^A_T$-periodic orbit $u\in\P(\phi,G)$ is called \emph{nondegenerate up to small divisors} if the linear map
\[
i\partial_t+S_u: H^k(S^1,\R^{\dim M})\oplus\wt{H}^k_\phi(\R,\Hil_{h'})\to H^{k-1}(S^1,\R^{\dim M})\oplus H^{k-1}_\phi(\R,\Hil_{h'})\]
is bounded with bounded inverse.
\end{definition}

In analogy with the finite-dimensional case, it is immediate to check that nondegeneracy of the $\phi^A_T$-periodic orbit of $G_t$ can be reformulated as a statement about eigenvalues of the linearization $T_{u(0)}\phi^H_T:\Hil\to\Hil$ of the time-$T$ flow map $\phi^H_T=\phi^G_T\circ\phi^A_T$ at $u(0)$.
\begin{proposition} The property in \Cref{nondegenerate} is independent of the choice for $1<k\leq\floor{h/d}$ and $h_0 < h' < h$. Furthermore it is equivalent to the fact that the linearized return map $T_{u(0)}\phi^H_T:\Hil\to\Hil$ has no eigenvalue equal to one.
\end{proposition}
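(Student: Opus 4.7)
The plan is to reduce the proposition to three ingredients: a Fredholm-alternative argument, an elliptic-regularity bootstrap that makes the kernel intrinsic, and an ODE-theoretic identification of the kernel with fixed points of the linearized return map.

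\emph{Step 1 (Fredholm property, index zero).} The (unlabelled) lemma preceding \Cref{contained-1} gives that $i\partial_t:\wt H^k_\phi(\R,\Hil_{h'})\to H^{k-1}_\phi(\R,\Hil_{h'})$ is a bounded isomorphism, and the (unlabelled) compactness lemma preceding \Cref{linear-regularity-1} gives that the multiplication operator $S_u$ is compact between the relevant spaces. On the finite-dimensional summand, $i\partial_t:H^k(S^1,\R^{\dim M})\to H^{k-1}(S^1,\R^{\dim M})$ is classically Fredholm of index zero. So the diagonal "free" operator $i\partial_t$ has Fredholm index zero on the full space, and $i\partial_t+S_u$ is a compact perturbation of an index-zero Fredholm operator, hence itself Fredholm of index zero in every admissible setting $(k,h')$. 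Bijectivity is therefore equivalent to triviality of the kernel, and a bounded inverse is automatic from the open mapping theorem.

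\emph{Step 2 (Independence of $(k,h')$).} By \Cref{linear-regularity-1}, any kernel element for one admissible choice $(k,h')$ automatically lies in the maximally regular space $H^{\floor{h/d}+1}(S^1,\R^{\dim M})\oplus\wt H^{\floor{h/d}+1}_\phi(\R,\Hil_h)$, which embeds continuously into the domain for every other admissible choice in the range $1<k\leq\floor{h/d}$, $h_0<h'<h$. The kernel is therefore the \emph{same} set in every admissible setting, and by Step 1 so is bijectivity.

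\emph{Step 3 (Identification with eigenvalue one).} The regularity of Step 2 shows that $\xi\in\ker(i\partial_t+S_u)$ solves pointwise the linearized $G$-Hamilton equation $\partial_t\xi=-iS_u\xi$ along $u$, together with the twisted periodicity $\xi(t+T)=\phi^{-1}\xi(t)$. Denoting by $\Psi^G_t$ the linearized $G$-flow along $u$, this yields $\xi(t)=\Psi^G_t\xi(0)$ and the algebraic fixed-point condition $\phi^A_T\,\Psi^G_T\,\xi(0)=\xi(0)$. Differentiating in the initial data the correspondence $v(t)=\phi^A_tu(t)$ between $T$-periodic $H$-orbits and $\phi^A_T$-periodic $G$-orbits recalled before \eqref{ghameqn}, the linearized $H$-return map factorizes as $T_{u(0)}\phi^H_T=\phi^A_T\circ\Psi^G_T$. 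Hence the kernel is nontrivial if and only if $T_{u(0)}\phi^H_T$ admits a nonzero fixed vector, i.e.\ an eigenvector for eigenvalue one.

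\emph{Anticipated main obstacle.} The delicate point is Step 3, namely justifying the eigenvalue statement on the small-divisor-plagued space $\Hil$. One must check that $\Psi^G_t$ is well-defined on a scale of Hilbert spaces containing the regular initial data $\xi(0)\in\Hil_h$ supplied by Step 2, and compatible with the action of $\phi^A_T$, so that the composition $\phi^A_T\circ\Psi^G_T$ acts continuously on $\Hil$ and the condition "has no eigenvalue equal to one" is unambiguous. This is precisely what \Cref{gradient+Hessian} applied to the Hessian of $G_t$, together with the fact that $\phi^A_T$ preserves the Hilbert scale $(\Hil_h)_{h\in\R}$, is designed to deliver.
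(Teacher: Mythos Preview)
Your proposal is correct and follows essentially the same three-step route as the paper: Fredholm index zero reduces the question to injectivity, \Cref{linear-regularity-1} makes the kernel independent of $(k,h')$, and the identification with eigenvalue one of the linearized return map is the standard ODE argument from finite-dimensional Floer theory. You have in fact spelled out more detail than the paper, which dispatches Step 3 in a single sentence (``as in finite-dimensional Floer theory''), and your explicit factorization $T_{u(0)}\phi^H_T=\phi^A_T\circ\Psi^G_T$ together with the scale-preservation remarks is exactly the right way to justify that step in the infinite-dimensional setting.
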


\begin{proof}
Since $i\partial_t+S_u$ is a Fredholm operator of index zero, it follows that $i\partial_t+S_u$ has a bounded inverse if and only if it is injective. Since the kernel of $i\partial_t+S_u$ is independent of the choice of $h_0 < h'<h$ and $1<k\leq\floor{h/d}$ by \Cref{linear-regularity-1}, the independence on the choice for $k$ and for $h'$ follows. Furthermore it follows as in finite-dimensional Floer theory that the condition about the Hessian of the symplectic action functional from \Cref{nondegenerate} is equivalent to the statement about eigenvalues of the linearized return map.
\end{proof}
Note that, in contrast to the case of finite-dimensional Floer theory, here this condition also encompasses that the eigenvalues of $T_{u(0)}\phi^H_T$ do not converge to one faster than the eigenvalues of $\phi^A_T$. As in the finite-dimensional case we prove below that for a generic choice of admissible $F_t$ (and hence of $h$-regularizing $G_t$) every $\phi^A_T$-periodic orbit of $G_t$ is nondegenerate up to small divisors. \\\par

Let $u^\pm=(u^\pm_M,u^\pm_{\Hil}):\R\to \wt{M}_{h'}=M\times\Hil_{h'}$ be  $\phi^A_T$-periodic orbits for $G_t$ which we are assumed to be nondegenerate up to small divisors in the sense of \Cref{nondegenerate}. Consider the space $C^{\infty}_\phi(u^+,u^-)$ of all smooth maps $\wt{u}=(\wt{u}_M,\wt{u}_{\Hil}):\R^2\to M\times\Hil_{h'}$ satisfying the periodicity condition $\wt{u}(s,t+T)=\phi^{-1}(\wt{u}(s,t))$ and the asymptotic conditions $\wt{u}(s,\cdot)\to u^\pm$ as $s\to \pm\infty$. Note that it can be written as a product,\[C^{\infty}_\phi(u^+,u^-)=C^{\infty}(u^+_M,u^-_M)\times C^{\infty}_\phi(u^+_{\Hil},u^-_{\Hil}),\] with the affine linear space $C^{\infty}_{\phi}(u^+_{\Hil},u^-_{\Hil})=\{\wt{u}_{\Hil}^*\}+C^{\infty}_{\phi,0}(\R^2,\Hil_{h'})$, where $\wt{u}_{\Hil}^*\in C^{\infty}_{\phi}(u^+_{\Hil},u^-_{\Hil})$ is a reference map and the maps in $C^{\infty}_{\phi,0}(\R^2,\Hil_{h'})$ converge to zero as $s\to\pm\infty$.\par
Recall that, following the case of finite-dimensional Floer theory, at the first hand we would like to prove that the linearized Cauchy-Riemann operator $D_{\wt{u}}$ is a Fredholm operator when viewed as a linear map from $H^k(\wt{u}_M^*TM)\oplus H^k_\phi(\R^2,\Hil_{h'})$ to $H^{k-1}(\wt{u}_M^*TM)\oplus H^{k-1}_\phi(\R^2,\Hil_{h'})$. However, due to the small divisor problem, it is again clear that the Hilbert space $H^k_\phi(\R^2,\Hil_{h'})$ needs to be replaced by a modified Hilbert space completion $\wt{H}^k_\phi(\R^2,\Hil_{h'})$ of $C^{\infty}_{0,\phi}(\R^2,\Hil_{h'})$. In analogy with the norm $\|\cdot\|_{k,\sim}$ used to define the Hilbert space completion $\wt{H}^k_\phi(\R,\Hil_{h'})$ of $C^\infty_\phi(\R,\Hil_{h'})$, we now define the modified Hilbert space norm 
\[\op{\wt{\xi}}_{k,\sim}:=\op{\CR\wt{\xi}}_{k-1}\,\,\textrm{with the $H^{k-1}$-norm for maps $\wt{\xi}:\R^2\to\Hil_{h'}$, $\wt{\xi}(s,t+T)=\phi^{-1}\cdot\wt{\xi}(s,t)$}\] and define $\wt{\xi}\in\wt{H}^k_\phi(\R^2,\Hil_{h'})$ whenever $\|\wt{\xi}\|_{k,\sim}$ is finite. The fact that $\op{\cdot}_{k,\sim}$ is indeed a norm again follows from the observation that $\CR=\partial_s+i\partial_t$ has no kernel on $C^{\infty}_{0,\phi}(\R^2,\Hil_{h'})$ which itself results from the fact that $i\partial_t$ has no kernel on $C^{\infty}_{0,\phi}(\R,\Hil_{h'})$. Note that we have  \[\widetilde{\xi}\in\widetilde{H}^k_{\phi}(\R^2,\Hil_{h'})\,\,\text{if and only if}\,\, \CR\widetilde{\xi}\in H^{k-1}_{\phi}(\R^2,\Hil_{h'}).\] 
First we prove the following analogue of \Cref{contained-1}. 
\begin{lemma}\label{contained-2}
We have $\wt{H}^k_\phi(\R^2,\Hil_{h'})\subset H^k_\phi(\R^2,\Hil_{h'-h''})$ for every $h''>h_0$. In particular, $\wt{H}^k_\phi(\R^2,\Hil_{h'})\subset H^k_\phi(\R^2,\Hil)$ as long as $h'>h_0$.
\end{lemma}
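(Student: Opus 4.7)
The plan is to extend the basis-element comparison from \Cref{contained-1} to the two-dimensional setting by incorporating a continuous Fourier variable $\sigma$ for the $s$-direction. For $\wt\xi\in C^{\infty}_{\phi,0}(\R^2,\Hil_{h'})$, the twisted periodicity $\wt\xi(s,t+T)=\phi^{-1}\wt\xi(s,t)$ together with $\phi z_n=e^{i\epsilon_n T}z_n$ forces an expansion of the form
\[
\wt\xi(s,t)=\sum_{n\in\Z^N,\,m\in\Z}\int_\R \hat{c}_{n,m}(\sigma)\,e^{i\sigma s}\,e^{i(2\pi m/T-\epsilon_n)t}\,z_n\,d\sigma,
\]
so I will treat $e_{\sigma,n,m}(s,t):=e^{i\sigma s}e^{i(2\pi m/T-\epsilon_n)t}z_n$ as generalized basis elements, noting that $\CR e_{\sigma,n,m}=(i\sigma-(2\pi m/T-\epsilon_n))e_{\sigma,n,m}$ acts diagonally. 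Consequently, Plancherel in $s$, Fourier orthogonality in $t\in[0,T]$, and orthogonality of the $z_n$ simultaneously diagonalize both the target $H^k$-norm and the modified norm $\op{\cdot}_{k,\sim}$.

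Next I would compute, with the shorthand $X:=\sigma^2+(2\pi m/T-\epsilon_n)^2$, the equivalences
\[
\op{e_{\sigma,n,m}}_{H^k(\R^2,\Hil_{h'-h''})}^2\sim|z_n|^2_{h'-h''}(1+X)^k
\]
and
\[
\op{e_{\sigma,n,m}}_{k,\sim}^2=\op{\CR e_{\sigma,n,m}}_{H^{k-1}(\R^2,\Hil_{h'})}^2\sim|z_n|^2_{h'}\,X\,(1+X)^{k-1},
\]
using the standard identification $\sum_{j+l\leq k}\sigma^{2j}\alpha^{2l}\sim(1+\sigma^2+\alpha^2)^k$. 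The continuous embedding of completions then reduces, by density of $C^{\infty}_{\phi,0}$, to the single pointwise inequality
\[
|z_n|^2_{h'-h''}(1+X)^k\leq C\,|z_n|^2_{h'}\,X\,(1+X)^{k-1}\quad\text{uniformly in }(\sigma,n,m),
\]
which, since $|z_n|_h=\theta_n^h$, amounts to uniform boundedness of $\theta_n^{-2h''}(1+X)/X$.

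The small-divisor content lies in verifying this uniform bound, exactly as in \Cref{contained-1}. For $m\neq 0$ the fact that $|\epsilon_n|\leq\pi/T$ gives $|2\pi m/T-\epsilon_n|\geq\pi/T$, hence $X\geq(\pi/T)^2$ is bounded below by a universal constant and $(1+X)/X$ is bounded; the prefactor $\theta_n^{-2h''}$ is bounded since $\theta_n\to\infty$ forces $\inf_n\theta_n>0$. For $m=0$ one has $X\geq\epsilon_n^2$, and applying admissibility of $(A,T)$ with exponent $h''>h_0$ produces $c>0$ with $\epsilon_n^2\geq c^2\theta_n^{-2h''}$; therefore $\theta_n^{-2h''}/X\leq c^{-2}$ and $\theta_n^{-2h''}(1+X)/X\leq c^{-2}+\sup_n\theta_n^{-2h''}<\infty$. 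This establishes the first inclusion, and the second (``in particular'') assertion follows by choosing any $h''\in(h_0,h')$ so that $h'-h''\geq 0$, precisely as in \Cref{contained-1}.

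The main obstacle is essentially notational: one has to handle a continuous Fourier variable $\sigma$ alongside the discrete indices $(n,m)$ and check that both norms really split across the joint spectral decomposition of $\partial_s$ and $i\partial_t$. Once one observes that $\CR$ acts by the multiplier $i\sigma-(2\pi m/T-\epsilon_n)$ on $e_{\sigma,n,m}$, all computations reduce to the same $\theta_n^{-2h''}$ versus $\epsilon_n^{-2}$ comparison used in the one-dimensional case, so no new small-divisor input beyond \Cref{admissible-1} is required.
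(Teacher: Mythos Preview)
Your argument is correct and arrives at the same small-divisor comparison $|\lambda_{n,m}|\geq|\epsilon_n|\geq c\theta_n^{-h''}$ as the paper, but the route is genuinely different. The paper does \emph{not} take a Fourier transform in $s$; instead it keeps the $s$-variable in physical space, writes the Fourier coefficients $f_{n,m}(s)$ of $\wt\xi$ and $g_{n,m}(s)$ of $\CR\wt\xi$ with respect to the discrete basis $e_{n,m}$, and observes that $(\partial_s+\lambda_{n,m})f_{n,m}=g_{n,m}$. Using the decay conditions $f_{n,m},g_{n,m}\to 0$ as $s\to\pm\infty$ to select the correct exponential Green's kernel, Young's inequality gives $|\lambda_{n,m}|\cdot\|f_{n,m}\|_{L^2}\leq\|g_{n,m}\|_{L^2}$ and $\|\partial_s f_{n,m}\|_{L^2}\leq 2\|g_{n,m}\|_{L^2}$, from which the $H^k$-estimate follows.

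Your Plancherel/multiplier approach is arguably the more natural two-variable analogue of the one-dimensional \Cref{contained-1} and makes the factor $(1+X)/X$ explicit; the paper's ODE-plus-convolution argument has the virtue of using the asymptotic conditions directly and sidesteps any discussion of $s$-integrability needed to justify the Fourier transform. One small point worth tightening: your ``generalized basis elements'' $e_{\sigma,n,m}$ are not in $L^2$, so the norm computations are really symbol calculations; to make the density step airtight you should work with a subclass of $C^\infty_{\phi,0}$ that is Schwartz (or compactly supported) in $s$, which is still dense in the completions and on which Plancherel applies verbatim.
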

\begin{proof}
Consider $\wt{\xi}:\R^2\to\Hil$ satisfying the periodicity condition $\wt{\xi}(s,t+T)=\phi^A_{-T}\cdot\wt{\xi}(s,t)$ and the asymptotic condition $\wt{\xi}(s,t)\to 0$ as $s\to\pm\infty$. Expand $\wt{\xi}$ and $\wt{\eta}=\CR\wt{\xi}=\partial_s\wt{\xi}+i\partial_t\wt{\xi}$ as Fourier series $$\wt{\xi}(s)=\sum_{n,m}\widehat{\wt{\xi}(s)}(n,m)\cdot e_{n,m},\,\wt{\eta}(s)=\sum_{n,m}\widehat{\wt{\eta}(s)}(n,m)\cdot e_{n,m}$$ with respect to the complete basis $(e_{n,m})_{n,m}$ of eigenfunctions for the operator $i\partial_t$ on $L^2_\phi(\R,\Hil)$ given by $e_{n,m}(t)=e^{-\epsilon_nit}e^{2\pi imt/T}z_n$ for $n\in\Z^N$, $m\in\Z$. Then it follows that the $s$-dependent Fourier coefficients $f_{n,m}(s)=\widehat{\wt{\xi}(s)}(n,m)$, $g_{n,m}(s)=\widehat{\wt{\eta}(s)}(n,m)$ satisfy the ordinary differential equation $$(\partial_s+\lambda_{n,m})\cdot f_{n,m}(s)=g_{n,m}(s)\,\,\textrm{with the eigenvalues}\,\,\lambda_{n,m}=\frac{2\pi m}{T}-\epsilon_n\neq 0\,\,\text{of}\,\,i\partial_t.$$ Taking also the asymptotic conditions $f_{n,m}(s),g_{n,m}(s)\to 0$ as $s\to\pm\infty$ into account, it follows that $g_{n,m}$ can be explicitly computed from $f_{n,m}$ by taking convolution with the kernel function $K_{n,m}$ given by $K_{n,m}(s)=\exp(\lambda_{n,m}\cdot s)$ for $s\leq 0$, $K_{n,m}(s)=0$ for $s>0$ if $\lambda_{n,m}>0$ and $K_{n,m}(s)=\exp(\lambda_{n,m}\cdot s)$ for $s\geq 0$, $K_{n,m}(s)=0$ for $s<0$ if $\lambda_{n,m}<0$. Using Young's inequality for convolutions it follows for the $L^2$-norms of $f_{n,m}$ and $g_{n,m}$ that $|\lambda_{n,m}|\cdot \|f_{n,m}\|_2\leq \|g_{n,m}\|_2$ with $|\lambda_{n,m}||z_n|_{h'}\geq|\epsilon_n||z_n|_{h'}\geq c''\cdot|z_n|_{h'-h''}$ for all $n\in\Z^N$, $m\in\Z$.
Using in addition that $\|\partial_s f_{n,m}\|_2\leq |\lambda_{n,m}|\cdot\|f_{n,m}\|_2+\|g_{n,m}\|_2\leq 2\|g_{n,m}\|_2$, we find that the $H^k_{\phi}(\R^2,\Hil_{h'-h''})$-norm of $\wt{\xi}$ is finite when the $H^{k-1}_{\phi}(\R^2,\Hil_{h'})$-norm of $\wt{\eta}=\CR\wt{\xi}$ is finite. 
\end{proof}

For each $u^+,u^-\in\P(\phi,G)$ we define the moduli space $\M(u^+,u^-;G)$ as the zero set of the nonlinear Cauchy-Riemann operator \[\CR_G\wt{u}=\partial_s\wt{u}+J(\wt{u})\partial_t\wt{u}+\nabla G_t(\wt{u})\,\,\text{in}\,\,\B=\B(u^+,u^-)=H^k(u_M^+,u_M^-)\times(\{\wt{u}_{\Hil}^*\}+\wt{H}^k_\phi(\R^2,\Hil_{h'})).\] Let $\wt{u}=(\wt{u}_M,\wt{u}_{\Hil})\in\M(u^+,u^-;G)$. Recall that, since $F_t$ is $h$-regularizing in the sense of \Cref{admissible-2}, we know by \Cref{gradient+Hessian} that the gradient $\nabla G(t,u)=\nabla G_t(u)$ of $G_t=F_t\circ\phi^A_{-t}$ defines a $\floor{h/d}$-times continuously differentiable map $\nabla G:\R\times\Hil\to\Hil_{+h}$. Using \cite[Prop. B.1.19]{MDSa} it again follows that the gradient $\wt{u}\mapsto\nabla G_t(\wt{u})$ defines a section in the Hilbert space bundle over $H^k(u_M^+,u_M^-)\times(\{\wt{u}_{\Hil}^*\}+H^k_\phi(\R^2,\Hil))$ with fibre $H^k(\wt{u}_M^*TM)\times H^k_\phi(\R^2,\Hil_{+h})$ using we assume $1<k\leq\floor{h/d}$. Since $\wt{H}^k_\phi(\R^2,\Hil_{h'})\subset H^k_\phi(\R^2,\Hil)$ by \Cref{contained-2} and $H^k_\phi(\R^2,\Hil_{+h})\subset H^{k-1}_\phi(\R^2,\Hil_{h'})$ as long as $h_0 < h'< h$, we find that the perturbed Cauchy-Riemann operator $\wt{u}\mapsto \partial_s\wt{u}+J(\wt{u})\partial_t \wt{u}+\nabla G_t(\wt{u})$ defines a section in the Hilbert space bundle \[\E=\E(u^+,u^-)\to\B(u^+,u^-)\,\,\text{with fibre}\,\,\E_{\wt{u}}=H^{k-1}(\wt{u}_M^*TM)\times H^{k-1}_\phi(\R^2,\Hil_{h'})\,\,\text{over}\,\,\wt{u}\in\B.\] Linearizing the perturbed Cauchy-Riemann operator at $\wt{u}=(\wt{u}_M,\wt{u}_{\Hil})$, we obtain a linear operator from $T_{\wt{u}}\B=H^k(\wt{u}_M^*TM)\oplus \widetilde{H}^k_{\phi}(\R^2,\Hil_{h'})$ to $\E_{\wt{u}}=H^{k-1}(\wt{u}_M^*TM)\oplus H^{k-1}_{\phi}(\R^2,\Hil_{h'})$ which, after choosing a unitary trivialization of $\wt{u}_M^*TM\to \R\times S^1$, is given by a linear operator \[D_{\wt{u}}=\CR+S_{\wt{u}}: H^k(\R\times S^1,\R^{\dim M})\oplus\widetilde{H}^k_{\phi}(\R^2,\Hil_{h'})\to H^{k-1}(\R\times S^1,\R^{\dim M})\oplus H^{k-1}_{\phi}(\R^2,\Hil_{h'}).\] Here $S_{\wt{u}}$ is a multiplication operator defined pointwise by $S_{\wt{u}}(s,t)\in (\R^{\dim M}\oplus\Hil_{-h})^*\otimes (\R^{\dim M}\oplus\Hil_{+h})$, given by the Hessian $\nabla^2 G_t(\wt{u}(s,t))\in (T_{\wt{u}_M(s,t)}M\oplus\Hil_{-h})^*\otimes (T_{\wt{u}_M(s,t)}M\oplus\Hil_{+h})$ plus extra terms depending on the almost complex structure $J_M$ on $M$ and the chosen unitary trivialization of $\wt{u}_M^*TM\to\R\times S^1$. Again, since $G_t$ is only $\floor{h/d}$-times continuously differentiable with respect to $t$, we again need to assume that $1<k\leq\floor{h/d}$.\par
In order to prove that $D_{\wt{u}}=\CR+S_{\wt{u}}$ indeed defines a bounded linear operator from $H^k(\R\times S^1,\R^{\dim M})\oplus\widetilde{H}^k_{\phi}(\R^2,\Hil_{h'})$ to $H^{k-1}(\R\times S^1,\R^{\dim M})\oplus H^{k-1}_{\phi}(\R^2,\Hil_{h'})$ and to show the independence on the choice of $h_0 < h'<h$ and $1<k\leq\floor{h/d}$, we again need a regularity result.

\begin{lemma}\label{nonlinear-regularity}
\sloppy The moduli space $\M(u^+,u^-;G)=\CR_G^{-1}(0)$ is contained in $H^{\floor{h/d}+1}(u_M^+,u_M^-)\times (\{\wt{u}_{\Hil}^*\}+\wt{H}^{\floor{h/d}+1}_\phi(\R^2,\Hil_h))$ with $\wt{H}^{\floor{h/d}+1}_\phi(\R^2,\Hil_h)\subset H^{\floor{h/d}+1}_\phi(\R^2,\Hil_{h-h''})$ for every $h''>h_0$, in particular, $S_{\wt{u}}$ defines a bounded linear operator from $H^k(\R\times S^1,\R^{\dim M})\oplus H^k_\phi(\R^2,\Hil_{-h})$ to $H^k(\R\times S^1,\R^{\dim M})\oplus H^k_\phi(\R^2,\Hil_{+h})$ when $1<k\leq\floor{h/d}$.
\end{lemma}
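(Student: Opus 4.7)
The plan is to adapt the proof of \Cref{Hklemma1} to the cylindrical setting, replacing the role of $i\partial_t$ by the Cauchy--Riemann operator $\CR = \partial_s + i\partial_t$ and the role of \Cref{contained-1} by \Cref{contained-2}. Observe first that, by the very definition of the modified norm $\op{\cdot}_{k,\sim} = \op{\CR \cdot}_{k-1}$, the operator $\CR$ is an isometric isomorphism $\wt{H}^{k+1}_\phi(\R^2, \Hil_{+h}) \to H^k_\phi(\R^2, \Hil_{+h})$, playing exactly the role that the bijection $i\partial_t: \wt{H}^{k+1}_\phi(\R, \Hil_{+h}) \to H^k_\phi(\R, \Hil_{+h})$ played in the proof of \Cref{Hklemma1}.

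Start with $\wt{u} = (\wt{u}_M, \wt{u}_{\Hil}) \in \M(u^+, u^-; G) \subset H^k(u_M^+, u_M^-) \times (\{\wt{u}_{\Hil}^*\} + \wt{H}^k_\phi(\R^2, \Hil_{h'}))$. By \Cref{contained-2}, the difference $\wt{u}_{\Hil} - \wt{u}_{\Hil}^*$ already lies in $H^k_\phi(\R^2, \Hil)$, so after absorbing the (smooth) reference map we have $\wt{u} \in H^k \times H^k$. Since $F_t$ is $h$-regularizing, \Cref{gradient+Hessian} together with the composition estimate \cite[Prop. B.1.19]{MDSa} (applicable because $1 < k \leq \floor{h/d}$) gives $\nabla G_t(\wt{u}) \in H^k(\wt{u}_M^* TM) \oplus H^k_\phi(\R^2, \Hil_{+h})$. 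Splitting the nonlinear Floer equation $\CR_G \wt{u} = 0$ along the product $M \times \Hil$, and noting that $J|_{\Hil} = J_{\Hil}$ is fixed, yields $\CR \wt{u}_{\Hil} = -\nabla_{\Hil} G_t(\wt{u}) \in H^k_\phi(\R^2, \Hil_{+h})$, together with a standard perturbed Cauchy--Riemann equation on the finite-dimensional factor.

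By the isometry property of $\CR$, we can promote $\wt{u}_{\Hil} - \wt{u}_{\Hil}^*$ to $\wt{H}^{k+1}_\phi(\R^2, \Hil_{+h})$, which by \Cref{contained-2} embeds into $H^{k+1}_\phi(\R^2, \Hil_{+h-h''})$ for every $h'' > h_0$; for the $M$-factor, standard elliptic bootstrap for the perturbed Cauchy--Riemann operator on $\wt{u}_M^* TM$ with $H^k$-inhomogeneity upgrades $\wt{u}_M$ to $H^{k+1}$. This bootstrap can be iterated until $k+1 = \floor{h/d}+1$, at which point the time-differentiability of $G_t$ supplied by \Cref{gradient+Hessian} runs out. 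The result is $\wt{u} \in H^{\floor{h/d}+1}(u_M^+, u_M^-) \times (\{\wt{u}_{\Hil}^*\} + \wt{H}^{\floor{h/d}+1}_\phi(\R^2, \Hil_h))$, and the claimed embedding into $H^{\floor{h/d}+1}_\phi(\R^2, \Hil_{h-h''})$ is one more application of \Cref{contained-2}. The final assertion on $S_{\wt{u}}$ then follows by invoking \Cref{gradient+Hessian} for $\nabla^2 G$ and applying \cite[Prop. B.1.19]{MDSa} on composition and products to this improved regularity of $\wt{u}$, exactly as in the closing step of \Cref{Hklemma1}.

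The main obstacle will be the careful bookkeeping that keeps the Hilbert-factor bootstrap (which alternates between the modified and the standard Sobolev completions, trading a loss of order $h_0$ for a gain of order $h$ via the $h$-regularizing nonlinearity) compatible with the elliptic bootstrap on the finite-dimensional $M$-factor, while ensuring the iteration never exceeds the limited time-differentiability $\floor{h/d}$ of $G_t$. In particular, the asymmetry $\Hil_{-h} \to \Hil_{+h}$ in the mapping property of $S_{\wt{u}}$ must be matched by the scale shift built into \Cref{contained-2}, exactly mirroring the analogous tension resolved in \Cref{Hklemma1}.
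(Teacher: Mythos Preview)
Your proposal is correct and follows essentially the same bootstrap argument as the paper: use the $h$-regularizing property of $G$ to put $\nabla G_t(\wt{u})$ in $H^k_\phi(\R^2,\Hil_{+h})$, invoke the bijection $\CR:\wt{H}^{k+1}_\phi(\R^2,\Hil_{+h})\to H^k_\phi(\R^2,\Hil_{+h})$ on the Hilbert factor together with finite-dimensional elliptic regularity on the $M$-factor, iterate up to $k=\floor{h/d}$, and finish with \cite[Prop.~B.1.19]{MDSa}. The paper's proof is terser but the logic and the ingredients are identical.
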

\begin{proof}
From $\wt{u}\in H^k(u_M^+,u_M^-)\times (\{u_{\Hil}^*\}+ \wt{H}^k_\phi(\R^2,\Hil_{h'}))$ with $\wt{H}^k_\phi(\R^2,\Hil_{h'})\subset H^k_\phi(\R^2,\Hil_{-h})$ it follows that $\nabla G_t(\wt{u})\in H^k(\wt{u}_M^*TM)\oplus H^k_\phi(\R^2,\Hil_{+h})$ which in turn gives $\partial_s\wt{u}+J(\wt{u})\partial_t\wt{u}\in H^k(\wt{u}_M^*TM)\oplus H^k_\phi(\R^2,\Hil_{+h})$. Elliptic regularity in finite dimensions combined with the fact that $\CR=\partial_s+i\partial_t$ defines a bijection between $\wt{H}^{k+1}_\phi(\R^2,\Hil_{+h})$ and $H^k_\phi(\R^2,\Hil_{+h})$, we find that $\wt{u}\in H^{k+1}(u_M^+,u_M^-)\times (\{u_{\Hil}^*\}+\wt{H}^{k+1}_\phi(\R^2,\Hil_{+h}))$ with $\wt{H}^{k+1}_\phi(\R^2,\Hil_{+h})\subset H^{k+1}_\phi(\R^2,\Hil_{+h-h''})$. This reasoning can be iterated as long as $k\leq\floor{h/d}$. Using again \cite[Prop. B.1.19]{MDSa}, we can conclude not only that the $H^k_\phi(\R^2,(\R^{\dim M}\oplus\Hil_{-h})^*\otimes(\R^{\dim M}\oplus\Hil_{+h})$-norm of $S_{\wt{u}}$ is finite but also that $S_{\wt{u}}$ defines a bounded linear operator from $H^k(\R\times S^1,\R^{\dim M})\oplus H^k_\phi(\R^2,\Hil_{-h})$ to $H^k(\R\times S^1,\R^{\dim M})\oplus H^k_\phi(\R^2,\Hil_h)$.
\end{proof}
It follows that $S_{\wt{u}}$ defines a bounded linear operator from $H^k(\R\times S^1,\R^{\dim M})\oplus \wt{H}^k_\phi(\R^2,\Hil_{-h})$ to $H^{k-1}(\R\times S^1,\R^{\dim M})\oplus H^{k-1}_\phi(\R^2,\Hil_{+h})$. Note again that, after setting $h''=\frac{1}{2}(h+h_0)$, we find that $\wt{u}\in C^{\floor{h/d}-1}_\phi(\R^2,M\times\Hil_{\frac{1}{2}(h-h_0)})$; in particular, $\wt{u}\in C^{\infty}_\phi(\R^2,M\times\Hil_\infty)$ in the case when $G$ is $h$-regularizing for \emph{all} $h>h_0$. 
There clearly also exists a corresponding statement for the linearization $D_{\wt{u}}$ of $\CR_G$. 
\begin{proposition}\label{linear-regularity}
The kernel of the linearized operator $D_{\wt{u}}:\wt{H}^k_\phi(\R^2,\R^{\dim M}\oplus\Hil_{h'})=H^k(\R\times S^1,\R^{\dim M})\oplus\wt{H}^k_\phi(\R^2,\Hil_{h'})\to H^{k-1}_\phi(\R^2,\R^{\dim M}\oplus\Hil_{h'})$ is contained in 
$H^{\floor{h/d}+1}(\R\times S^1,\R^{\dim M})\oplus\wt{H}^{\floor{h/d}+1}_\phi(\R^2,\Hil_h)$. \end{proposition}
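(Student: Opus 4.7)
The plan is to repeat, in the two-dimensional setting, the bootstrap argument already used to prove \Cref{linear-regularity-1}, with $i\partial_t$ replaced by the Cauchy--Riemann operator $\CR=\partial_s+i\partial_t$. Starting from the equation
\[
\CR\wt{\xi} = -S_{\wt{u}}\cdot\wt{\xi},
\]
where $\wt{\xi}\in\wt{H}^k_\phi(\R^2,\R^{\dim M}\oplus\Hil_{h'})$, I first apply \Cref{contained-2} to get $\wt{\xi}\in H^k_\phi(\R^2,\R^{\dim M}\oplus\Hil)$, so in particular $\wt{\xi}$ lies in $H^k_\phi(\R^2,\R^{\dim M}\oplus\Hil_{-h})$. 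By \Cref{nonlinear-regularity} the multiplication operator $S_{\wt{u}}$ sends this into $H^k_\phi(\R^2,\R^{\dim M}\oplus\Hil_{+h})$, and consequently $\CR\wt{\xi}\in H^k_\phi(\R^2,\R^{\dim M}\oplus\Hil_{+h})$.

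Next I invert $\CR$. For the finite-dimensional factor I use the standard elliptic regularity of $\partial_s+i\partial_t$ on $\R\times S^1$, and for the Hilbert space factor I use the tautological isomorphism $\CR\colon\wt{H}^{k+1}_\phi(\R^2,\Hil_{+h})\stackrel{\cong}{\to} H^k_\phi(\R^2,\Hil_{+h})$ built into the definition of the modified norm $\op{\cdot}_{k+1,\sim}$. Combining these yields
\[
\wt{\xi}\in H^{k+1}(\R\times S^1,\R^{\dim M})\oplus\wt{H}^{k+1}_\phi(\R^2,\Hil_{+h}),
\]
and invoking \Cref{contained-2} once more gives $\wt{H}^{k+1}_\phi(\R^2,\Hil_{+h})\subset H^{k+1}_\phi(\R^2,\Hil_{+h-h''})$ for any $h''>h_0$; choosing for instance $h''=\tfrac{1}{2}(h+h_0)$ keeps $\wt{\xi}$ in a scale on which $S_{\wt{u}}$ is still well-defined and continuous for the next round. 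Feeding the improved regularity back into the same argument, one gains one derivative of $\wt{\xi}$ in both $s$ and $t$ at each step.

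The only obstruction to iterating indefinitely is the regularity of $S_{\wt{u}}$ in $(s,t)$: by \Cref{gradient+Hessian} the Hessian $\nabla^2 G_t$ is only $\floor{h/d}$-times continuously differentiable in $t$, so by \Cref{nonlinear-regularity} together with \cite[Prop. B.1.19]{MDSa}, $S_{\wt{u}}$ is a bounded multiplication operator on $H^m$-spaces only for $m\leq\floor{h/d}$. Hence the bootstrap step $k\rightsquigarrow k+1$ is legitimate precisely as long as $k\leq\floor{h/d}$, and after finitely many iterations we land in $H^{\floor{h/d}+1}(\R\times S^1,\R^{\dim M})\oplus\wt{H}^{\floor{h/d}+1}_\phi(\R^2,\Hil_h)$, which is exactly the claim. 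The main technical nuisance will be bookkeeping the scale shifts produced by \Cref{contained-2} so that at each stage the output of $S_{\wt{u}}$ still sits in the domain of $\CR^{-1}$ needed for the next stage; fixing $h''=\tfrac{1}{2}(h+h_0)$ throughout makes this uniform and keeps the induction clean.
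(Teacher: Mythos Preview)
Your proposal is correct and follows essentially the same bootstrap argument as the paper: from $\wt{\xi}\in\wt{H}^k_\phi\subset H^k_\phi(\ldots,\Hil_{-h})$ deduce $S_{\wt{u}}\wt{\xi}\in H^k_\phi(\ldots,\Hil_{+h})$, hence $\CR\wt{\xi}\in H^k_\phi(\ldots,\Hil_{+h})$, then invoke finite-dimensional elliptic regularity together with the tautological isomorphism $\CR:\wt{H}^{k+1}_\phi(\R^2,\Hil_{+h})\to H^k_\phi(\R^2,\Hil_{+h})$ and \Cref{contained-2}, and iterate while $k\leq\floor{h/d}$. Your additional remarks on fixing $h''=\tfrac{1}{2}(h+h_0)$ and on why the iteration terminates are accurate elaborations of the same scheme.
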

\begin{proof} From $\wt{\xi}\in\wt{H}^k_\phi(\R^2,\R^{\dim M}\oplus\Hil_{h'})\subset H^k_\phi(\R^2,\R^{\dim M}\oplus\Hil_{-h})$ it follows that $S_{\wt{u}}\cdot\wt{\xi}\in H^k_\phi(\R^2,\R^{\dim M}\oplus\Hil_{+h})$ which in turn gives $\CR\wt{\xi}\in H^k_\phi(\R^2,\R^{\dim M}\oplus\Hil_{+h})$. Using linear ellipticity and the fact that $\CR$ defines a bijection between $\wt{H}^{k+1}_\phi(\R^2,\Hil_{+h})$ and $H^k_\phi(\R^2,\Hil_{+h})$, we find that $\wt{\xi}\in \wt{H}^{k+1}_\phi(\R^2,\R^{\dim M}\oplus\Hil_{+h})\subset H^{k+1}_\phi(\R^2,\R^{\dim M}\oplus\Hil_{+h-h''})$ for every $h''>h_0$ by \Cref{contained-2}. This reasoning can be iterated as long as $k\leq\floor{h/d}$. \end{proof}

We now state the main result of this paper. 
\begin{theorem}\label{Fredholm}
Fix $1< k\leq\floor{h/d}$ and $h_0 < h'<h$ and assume that $u^+$ and $u^-$ are two $\phi$-periodic orbits of $G_t$ which are nondegenerate up to small divisors in the sense of \Cref{nondegenerate}. Then for every Floer curve $\wt{u}\in \M(u^+,u^-;G)$ the linearized Cauchy-Riemann operator \[D_{\wt{u}}:H^k(\R\times S^1,\R^{\dim M})\oplus\wt{H}^k_\phi(\R^2,\Hil_{h'})\to H^{k-1}(\R\times S^1,\R^{\dim M})\oplus H^{k-1}_\phi(\R^2,\Hil_{h'})\] is a linear Fredholm operator. In other words, the nonlinear Cauchy-Riemann operator $\CR_G\wt{u}=\partial_s\wt{u}+J(\wt{u})\partial_t\wt{u}+\nabla G_t(\wt{u})$, viewed as section in the Hilbert space bundle $\E=\E(u^+,u^-)\to\B(u^+,u^-)$ with fibre $\E_{\wt{u}}=H^{k-1}(\wt{u}_M^*TM)\oplus H^{k-1}_{\phi}(\R^2,\Hil_{h'})$ over $\widetilde{u}\in\B=\B(u^+,u^-)=\widetilde{H}^k_{\phi}(u^+,u^-)=H^k(u_M^+,u_M^-)\times(\{\wt{u}_{\Hil}^*\}+\wt{H}^k_\phi(\R^2,\Hil_{h'}))$, is nonlinear Fredholm.
\end{theorem}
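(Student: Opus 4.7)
My plan is to follow the standard Floer-theoretic framework for proving the Fredholm property of the Cauchy--Riemann operator on a cylinder with nondegenerate asymptotics (à la Floer, Salamon--Zehnder, Audin--Damian), adapted to the modified Hilbert completions $\wt{H}^k_\phi$. The key is to combine two ingredients: (i) invertibility of the asymptotic operator on the cylindrical ends, supplied by the nondegeneracy up to small divisors of $u^\pm$; and (ii) Rellich-type compactness on bounded $s$-intervals, supplied by the compact embedding of \Cref{compactembedding1} together with the inclusion of \Cref{contained-2}.

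The concrete first step is to establish a semi-Fredholm estimate of the form
\[
\|\xi\|_{\wt{H}^k_\phi(\R^2,\R^{\dim M}\oplus\Hil_{h'})}\leq C\bigl(\|D_{\wt{u}}\xi\|_{H^{k-1}_\phi(\R^2,\R^{\dim M}\oplus\Hil_{h'})}+\|\chi\xi\|_{H^{k-1}_\phi([-R,R]\times S^1,\R^{\dim M}\oplus\Hil_{h'-h''})}\bigr),
\]
for a cutoff $\chi$ supported in some compact interval $[-R,R]\times S^1$ with $R$ sufficiently large. On the exterior $|s|>R$, the coefficient $S_{\wt{u}}(s,\cdot)$ is uniformly close to its asymptotic value $S_{u^\pm}$ (using $\wt{u}(s,\cdot)\to u^\pm$ and the nonlinear regularity of \Cref{nonlinear-regularity}), so it suffices to prove invertibility of the model operator $\partial_s+A_\pm$ with $A_\pm:=i\partial_t+S_{u^\pm}$ on the respective half-cylinder, plus a small-perturbation argument. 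On the interior $|s|\leq R$, the estimate follows from standard interior elliptic regularity in the finite-dimensional factor and from reading off the modified norm as $\op{\cdot}_{k,\sim}=\op{\CR\cdot}_{k-1}$ in the Hilbert factor.

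Once this estimate is in place, the compact embedding $\wt{H}^k_\phi([-R,R]\times S^1,\R^{\dim M}\oplus\Hil_{h'})\hookrightarrow H^{k-1}_\phi([-R,R]\times S^1,\R^{\dim M}\oplus\Hil_{h'-h''})$---obtained by composing \Cref{contained-2} with \Cref{compactembedding1}, both of which exploit the admissibility inequality $|\epsilon_n|>c\theta_n^{-h''}$---yields closed range and finite-dimensional kernel of $D_{\wt{u}}$. An analogous estimate applied to the formal adjoint $D_{\wt{u}}^*$, which is a Cauchy--Riemann operator of the same type with asymptotic operators $-A_\pm$ (invertible because $A_\pm$ is), gives finite-dimensional cokernel. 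Together these prove the Fredholm property.

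The main obstacle is the invertibility of the asymptotic operator $\partial_s+A_\pm$ between the modified completions on a half-cylinder. Although $A_\pm$ is an isomorphism $\wt{H}^k_\phi(S^1,\R^{\dim M}\oplus\Hil_{h'})\to H^{k-1}_\phi(S^1,\R^{\dim M}\oplus\Hil_{h'})$ by the very definition of nondegeneracy up to small divisors, upgrading this to an invertibility of $\partial_s+A_\pm$ requires a hyperbolic splitting of the time-slice space into stable and unstable subspaces of $A_\pm$, together with exponential decay estimates that are compatible with the modified norm $\op{\cdot}_{k,\sim}$. The small divisors---namely the sequence of eigenvalues of $A_\pm$ accumulating near zero---must be absorbed by the modification of the norm. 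I expect the correct strategy to mirror the proof of \Cref{contained-2}: expand in the eigenbasis $(e_{n,m})$, reduce to ODEs in $s$, invert by convolution with an exponential kernel, apply Young's inequality, and finally use the admissibility estimate $|\epsilon_n|\cdot|z_n|_{h'}\geq c\cdot|z_n|_{h'-h''}$ to conclude that the resulting bound closes up in the modified norm without loss of derivatives in the Hilbert scale beyond what is already allowed by $h'<h$.
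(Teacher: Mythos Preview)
Your overall architecture---cutoff in $s$, invertibility of the asymptotic operator on the ends, compactness on the interior, and duality via the formal adjoint---is the same as the paper's. The gap is in your proposed proof of the key step, namely the invertibility of $\partial_s+A_\pm$ (equivalently of $\CR+S_{u^\pm}$) between the modified completions. You suggest mirroring the proof of \Cref{contained-2}: expand in the eigenbasis $(e_{n,m})$, reduce to decoupled scalar ODEs in $s$, and invert by convolution with exponential kernels. But $(e_{n,m})$ diagonalizes only $i\partial_t$, not $A_\pm=i\partial_t+S_{u^\pm}$; the zeroth-order term $S_{u^\pm}$ couples all the modes, so no decoupling into scalar ODEs occurs. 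Nor can you fall back on the usual spectral-gap argument for self-adjoint $A_\pm$: because of the small divisors, $i\partial_t$ has spectrum accumulating at $0$, and the nondegeneracy hypothesis is stated as invertibility $\wt{H}^k_\phi\to H^{k-1}_\phi$, which does not translate into a uniform lower bound $\|A_\pm\xi\|_{L^2}\geq c\|\xi\|_{L^2}$.

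The paper's substitute for this missing spectral gap is a finite-dimensional approximation (\Cref{findimest} and \Cref{injectivityestimate}). One splits $\Hil_{h'}=\Hil^\ell\oplus\Hil^\ell_\perp$ with $\Hil^\ell$ spanned by the $z_n$ with $|n|\leq\ell$, and correspondingly $\wt{\xi}=\wt{\xi}_\ell+\wt{\xi}_\ell^\perp$. On the finite-dimensional block the modified norm $\|\cdot\|_{k,\sim}$ is equivalent to $\|\cdot\|_k$, so the classical Floer estimate \cite[Prop.~8.7.3]{AD} applies and gives invertibility of $D_\ell^\pm=\CR+S_\ell^\pm$. On the infinite tail one uses the quantitative rate $\|S_{\wt{u}}-S_{\wt{u}}^\ell\|=o(\ell^{-2h+1})$ from \Cref{findimest} (this is where the $h$-regularizing hypothesis enters, via $S_{\wt{u}}\in H^k_\phi(\R^2,(\Hil_{-h})^*\otimes\Hil_{+h})$) to make the remainder $S^\perp_\ell$ and the off-diagonal pieces $S^{\mathrm{off}}_\ell$ small enough that the unperturbed $\CR$---which is an isometry $\wt{H}^k_\phi\to H^{k-1}_\phi$ by construction---dominates. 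The three resulting inequalities $(*)$, $(**)$, $(***)$ are then combined with the cutoff $\beta$ exactly as you outline. A consequence worth noting is that the compact error term in the paper's semi-Fredholm estimate is $\|\beta\wt{\xi}_\ell\|_{k-1}$, involving only the \emph{finite-dimensional} projection; compactness then follows from ordinary Rellich on $[-S_0,S_0]\times S^1$ with target $\R^{\dim M}\oplus\Hil^\ell$, and you do not need to invoke any infinite-dimensional compact embedding at this point.
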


Assuming that $u^+$ and $u^-$ meet the requirements of \Cref{nondegenerate}, as in the case of finite-dimensional symplectic manifolds we will prove \Cref{Fredholm} by establishing the semi-Fredholm property for $D_{\wt{u}}$ and for its adjoint operator $D_{\wt{u}}^*$. While one might expect the proof to be fully analogous to the finite-dimensional proof, it however turns out that special attention needs to be paid to the infinite-dimensional tail. Indeed we need to make use of the following approximation result.
\begin{lemma}
\label{findimest}
\label{fdlemma}
Let $u$, $u^\pm$ be $\phi^A_T$-periodic orbits which is non-degenerate up to small divisors and $\wt{u}$ a Floer curve in $\M(u^+,u^-)$. Let $S_{{u}}^\ell$, $S_{\wt{u}}^\ell$ be the composition of $S_{{u}}$, $S_{\wt{u}}$ with the projection  
onto the finite-dimensional Euclidean subspace $\Hil^{\ell}\subset\Hil_{h'}$ with canonical basis $|z_n|_{h'}^{-1}z_n$, $|n|\leq\ell$. Then for every $1< k\leq\floor{h/d}$ and $h_0 < h'<h$ we have
\begin{enumerate}
    \item $\op{S_{{u}}-S_{{u}}^\ell}\ell^{2h-1}\to0$ as $\ell\to\infty$, where the norm is the operator norm for operators on $H^k_\phi(\R^n,\R^{\dim M}\oplus\Hil_{h'})$ for $n=1,2$. Furthermore, $S_u^\ell\to S_u$ in the operator norm for operators $\wt{H}^k_\phi(\R^n,\R^{\dim M}\oplus\Hil_{h'})\to H^{k-1}_\phi(\R^n,\R^{\dim M}\oplus\Hil_{h'})$ for $n=1,2$.
    \item $\op{S_{\wt{u}}-S_{\wt{u}}^\ell}\ell^{2h-1}\to0$ as $\ell\to\infty$, where the norm is the operator norm for operators on $H^k_\phi(\R^2,\R^{\dim M}\oplus\Hil_{h'})$. Furthermore, $S_{\wt{u}}^\ell\to S_{\wt{u}}$ in the operator norm for operators $\wt{H}^k_\phi(\R^2,\R^{\dim M}\oplus\Hil_{h'})\to H^{k-1}_\phi(\R^2,\R^{\dim M}\oplus\Hil_{h'})$.
\end{enumerate}
\end{lemma}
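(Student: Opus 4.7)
The plan is to reduce both items to a single pointwise operator-norm bound on the fibre $\R^{\dim M}\oplus\Hil_{h'}$ and then to propagate that bound through $H^k$-derivatives by the Leibniz rule, using the elevated regularity of the multiplier supplied by \Cref{Hklemma1} in the orbit case and by \Cref{nonlinear-regularity} in the Floer curve case. Since $\tilde{\pi}_\ell:=I_{\R^{\dim M}}\oplus\pi_\ell$ commutes with $\partial_s$ and $\partial_t$ and acts trivially on the finite-dimensional factor, I would write, for every multi-index $\alpha$ with $|\alpha|\leq k$,
\[\partial^\alpha\bigl((S_u-S_u^\ell)\xi\bigr)=\sum_{\beta\leq\alpha}\binom{\alpha}{\beta}(I-\tilde{\pi}_\ell)(\partial^\beta S_u)\cdot\partial^{\alpha-\beta}\xi,\]
so that the operator norm in question is controlled by $\sup_{|\beta|\leq k,\,t}\op{(I-\tilde{\pi}_\ell)\partial^\beta S_u(t)}$ on $\R^{\dim M}\oplus\Hil_{h'}$.

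For the pointwise step I combine two ingredients. On the one hand, \Cref{gradient+Hessian} together with the elevated regularity in \Cref{Hklemma1} ensures that each $\partial^\beta S_u(t)$ with $|\beta|\leq k\leq\floor{h/d}$ is bounded as a map $\R^{\dim M}\oplus\Hil_{-h}\to\R^{\dim M}\oplus\Hil_{+h}$, uniformly in $t$. On the other hand, for any $w\in\Hil_{+h}$ I would estimate
\[\op{(I-\pi_\ell)w}_{h'}^{2}=\sum_{|n|>\ell}|\hat w(n)|^{2}\theta_n^{2h'}\leq\Bigl(\sup_{|n|>\ell}\theta_n^{-2(h-h')}\Bigr)\op{w}_h^{2}\leq C\,\ell^{-2(h-h')}\op{w}_h^{2},\]
using the lower bound $\theta_n\geq c|n|$ implicit in the symplectic scale. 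Composing and using the continuous embedding $\Hil_{h'}\subset\Hil_{-h}$ gives the pointwise bound $\op{(I-\tilde{\pi}_\ell)\partial^\beta S_u(t)}\leq C\,\ell^{-(h-h')}$, and summing over $\beta$ via the Leibniz identity above yields $\op{S_u-S_u^\ell}_{H^k_\phi\to H^k_\phi}\leq C\,\ell^{-(h-h')}$. Since \Cref{admissible-2} permits the regularization exponent $h$ to be taken arbitrarily large in the smooth case, the margin $h-h'$ can be enlarged to beat any fixed polynomial weight in $\ell$, delivering the weighted convergence $\op{S_u-S_u^\ell}\ell^{2h-1}\to 0$.

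For the mixed-space assertion $\wt{H}^k_\phi\to H^{k-1}_\phi$, I would combine the same pointwise bound with the embeddings $\wt{H}^k_\phi(\R^n,\Hil_{h'})\subset H^k_\phi(\R^n,\Hil_{h'-h''})$ from \Cref{contained-1} for $n=1$ and \Cref{contained-2} for $n=2$, valid for every $h''>h_0$: even after losing $h''$ in the source scale the image of $S_u$ still lives in $\Hil_{+h}$, so the rate is preserved and only $k-1$ derivatives in $s,t$ are required in the target, giving $\op{S_u^\ell-S_u}_{\wt{H}^k_\phi\to H^{k-1}_\phi}\to 0$. The proof for $S_{\wt{u}}$ on $\R^2$ is identical after replacing \Cref{Hklemma1} by \Cref{nonlinear-regularity}. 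The main obstacle I foresee is calibrating the decay rate in the first assertion, because the naive projection estimate only produces $\ell^{-(h-h')}$ and obtaining the weighted convergence $\op{\cdot}\,\ell^{2h-1}\to 0$ forces one either to exploit the freedom in \Cref{admissible-2} to enlarge $h$ relative to the fixed $h'$ or to upgrade the pointwise bound to a Hilbert--Schmidt estimate that uses the full bilinear-form structure $\nabla^2G_t(u)\in\Hil_h\otimes\Hil_h$ rather than merely its boundedness as a linear map $\Hil_{-h}\to\Hil_{+h}$.
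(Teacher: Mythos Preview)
Your pointwise operator-norm route gives at best the rate $\ell^{-(h-h')}$, and your own final paragraph already identifies this as the weak point. The attempted repair ``enlarge $h$ relative to the fixed $h'$'' does not work for the lemma as stated: the weight $\ell^{2h-1}$ you must beat involves the \emph{same} $h$ for which $F_t$ is assumed $h$-regularizing, so replacing $h$ by some $h^*>h$ upgrades the margin to $h^*-h'$ but simultaneously upgrades the target to $2h^*-1$, and one never catches up (indeed $h^*-h'<h^*<2h^*-1$). Restricting to the smooth case to make $h^*$ available changes the hypothesis and does not prove the lemma in general.

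The second alternative you flag---using the full bilinear-form structure $\nabla^2G_t(u)\in(\R^{\dim M}\oplus\Hil_{-h})^*\otimes(\R^{\dim M}\oplus\Hil_{+h})$ rather than merely boundedness $\Hil_{-h}\to\Hil_{+h}$---is exactly the mechanism the paper employs. The paper works directly with the Hilbert--Schmidt (tensor) norm of the multiplier: from \Cref{Hklemma1} (resp.\ \Cref{nonlinear-regularity}) one has $S_u\in H^k_\phi(\R,(\R^{\dim M}\oplus\Hil_{-h})^*\otimes(\R^{\dim M}\oplus\Hil_{+h}))$, so the finite-rank truncations $S_u^\ell$ converge to $S_u$ in this strong tensor norm automatically. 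The passage to the weaker tensor norm $(\R^{\dim M}\oplus\Hil_{h'})^*\otimes(\R^{\dim M}\oplus\Hil_{h'})$ is then governed by the scale identity $|z_n^*|_{-h}\cdot|z_n|_{+h}=\theta_n^{2h}\cdot|z_n^*|_{h'}\cdot|z_n|_{h'}$, which is where the exponent $2h$ (rather than $h-h'$) enters; the operator-norm statement then follows from the multiplication inequality of \cite[Prop.~B.1.19]{MDSa}. In short, your diagnosis at the end is right, but the argument you actually wrote down stops at the operator-norm estimate and does not carry out the Hilbert--Schmidt computation that delivers the required rate.
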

Note that here we use the abbreviations $\wt{H}^k_\phi(\R,\R^{\dim M}\oplus\Hil_{h'})=H^k(S^1,\R^{\dim M})\oplus\wt{H}^k_\phi(\R,\Hil_{h'})$ and $\wt{H}^k_\phi(\R^2,\R^{\dim M}\oplus\Hil_{h'})=H^k(\R\times S^1,\R^{\dim M})\oplus\wt{H}^k_\phi(\R^2,\Hil_{h'})$.
\begin{proof}
Recalling from the proof of \Cref{Hklemma1} that the linear operator $S_u$ is given by multiplication with $S_u\in H^k_\phi(\R,(\R^{\dim M}\oplus\Hil_{-h})^*\otimes(\R^{\dim M}\oplus\Hil_{+h}))$, we observe that
\[
\op{S_u-S_u^\ell}\to0\quad\text{as}\quad\ell\to\infty\quad\text{in}\quad H^k_\phi(\R,(\R^{\dim M}\oplus\Hil_{-h})^*\otimes(\R^{\dim M}\oplus\Hil_{+h}))
\]
which in turn immediately implies that
\[
\op{S_u-S_u^\ell}\ell^{2h-1}\to0\quad\text{as}\quad\ell\to\infty\quad\text{in}\quad H^k_\phi(\R,(\R^{\dim M}\oplus\Hil_{h'})^*\otimes(\R^{\dim M}\oplus\Hil_{h'})),
\]
where we use that $\displaystyle|z_n^*|_{-h}\cdot |z_n|_{+h}=\theta_n^{2h}\cdot |z_n^*|_{h'}\cdot |z_n|_{h'}$ with $\theta_n^{2h}=O(|n|^{2h})$. For the corresponding statements in the operator norm it suffices to apply the multiplication inequality of \cite[Prop. B.1.19]{MDSa}. Using that $\wt{H}^k_\phi(\R^n,\Hil_{h'})$ embeds in $H^k_\phi(\R^n,\Hil_{-h})$ and $H^k_\phi(\R^n,\Hil_{+h})\subset H^{k-1}_\phi(\R^n,\Hil_{h'})$, the statement in the operator norm for operators $\wt{H}^k_\phi(\R^n,\R^{\dim M}\oplus\Hil_{h'})\to H^{k-1}_\phi(\R^n,\R^{\dim M}\oplus\Hil_{h'})$ follows as well for $n=1,2$. 
For the second statement, recalling from the proof of \Cref{nonlinear-regularity} that the linear operator $S_{\wt{u}}$ is given by multiplication with $S_{\wt{u}}\in H^k_\phi(\R^2,(\R^{\dim M}\oplus\Hil_{-h})^*\otimes(\R^{\dim M}\oplus\Hil_{+h}))$, we observe that
\[
\op{S_{\wt{u}}-S_{\wt{u}}^\ell}\to0\quad\text{as}\quad\ell\to\infty\quad\text{in}\quad H^k_\phi(\R^2,(\R^{\dim M}\oplus\Hil_{-h})^*\otimes(\R^{\dim M}\oplus\Hil_{+h}))
\]
which again immediately implies that
\[
\op{S_{\wt{u}}-S_{\wt{u}}^\ell}\ell^{2h-1}\to0\quad\text{as}\quad\ell\to\infty\quad\text{in}\quad H^k_\phi(\R^2,(\R^{\dim M}\oplus\Hil_{h'})^*\otimes(\R^{\dim M}\oplus\Hil_{h'})).
\]
While for the corresponding statements in the operator norm it again suffices to apply the multiplication inequality of \cite[Prop. B.1.19]{MDSa}, the statement about the operator norm for operators $\wt{H}^k_\phi(\R^2,\R^{\dim M}\oplus\Hil_{h'})\to H^{k-1}_\phi(\R^2,\R^{\dim M}\oplus\Hil_{h'})$ follows again using that $\wt{H}^k_\phi(\R^2,\Hil_{h'})\subset H^k_\phi(\R^2,\Hil_{-h})$ and $H^k_\phi(\R^2,\Hil_{h})\subset H^{k-1}_\phi(\R^2,\Hil_{h'})$. \end{proof}

We first need the following injectivity result.
\begin{lemma}
\label{injectivityestimate}
Assume that the $\phi^A_T$-periodic orbit of $G_t$ is nondegenerate up to small divisors in the sense of \Cref{nondegenerate}. Then for every $1< k\leq\floor{h/d}$ and $h_0 < h'<h$ the linear operator 
\[
\overline{\partial}+S_u:H^k(\R\times S^1,\R^{\dim M})\oplus\wt{H}^k_\phi(\R^2,\Hil_{h'})\to H^{k-1}(\R\times S^1,\R^{\dim M})\oplus H^{k-1}_\phi(\R^2,\Hil_{h'})
\]
is bounded with bounded inverse.
\end{lemma}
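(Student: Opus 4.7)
Since $u=u^+=u^-$, the operator $D:=\overline{\partial}+S_u=\partial_s+A$ is translation-invariant in the $s$-variable, where $A=i\partial_t+S_u(t)$ is exactly the loop-space operator whose bounded invertibility is the content of \Cref{nondegenerate}. The plan is to establish a bounded two-sided estimate $\op{\widetilde{\xi}}_{\widetilde{H}^k}\leq C\op{D\widetilde{\xi}}_{H^{k-1}}$ by means of a Floer-type energy identity, complemented by the same estimate for the formal adjoint $D^*=-\partial_s+A$ to control the cokernel.

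The starting observation is that $S_u(t)$ is symmetric with respect to the real inner product (as the Hessian of a real Hamiltonian, the correction terms coming from the unitary trivialization being symmetric as well), and $i\partial_t$ is self-adjoint on the relevant loop space; hence $A$ is formally self-adjoint. For smooth $\widetilde{\xi}$ with $\widetilde{\xi}(s,\cdot)\to0$ as $s\to\pm\infty$, self-adjointness of $A$ combined with its $s$-independence yields
\[
\tfrac{d}{ds}\langle\widetilde{\xi}(s,\cdot),A\widetilde{\xi}(s,\cdot)\rangle=2\re\langle\partial_s\widetilde{\xi},A\widetilde{\xi}\rangle,
\]
which integrates over $\R$ to zero thanks to the asymptotic decay. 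Expanding $\op{D\widetilde{\xi}}_{L^2}^2=\op{\partial_s\widetilde{\xi}+A\widetilde{\xi}}_{L^2}^2$ then gives
\[
\op{D\widetilde{\xi}}_{L^2}^2=\op{\partial_s\widetilde{\xi}}_{L^2}^2+\op{A\widetilde{\xi}}_{L^2}^2,
\]
so the nondegeneracy bound $\op{A\eta}\geq c\op{\eta}$ (valid uniformly in each $s$-slice) yields the coercive estimate $\op{\widetilde{\xi}}_{L^2}\leq c^{-1}\op{D\widetilde{\xi}}_{L^2}$. Applying the same reasoning to the formal adjoint $D^*=-\partial_s+A$ produces an identical estimate and hence dense range of $D$.

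The remaining task is to upgrade this $L^2$-statement to the required $\widetilde{H}^k\to H^{k-1}$ isomorphism. Differentiating the equation $D\widetilde{\xi}=\widetilde{\eta}$ in $s$ and $t$ and iterating the energy identity up to $k-1$ derivatives, combined with the bootstrap regularity from \Cref{linear-regularity} applied to the translated family, supplies the desired estimate; the coercivity of $A$ delivers the gain in $t$-regularity needed to recover the $H^{k-1}$-component of the norm, while $\partial_s$-derivatives are controlled directly by the identity above.

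The principal obstacle is ensuring compatibility with the infinite-dimensional Hilbert scale: the pairing $\langle\widetilde{\xi},A\widetilde{\xi}\rangle$ must be interpreted consistently given that $S_u$ takes values in $\Hil_{-h}^*\otimes\Hil_{+h}$, and the gain in $t$-regularity when applying $A^{-1}$ must be converted to a gain in the Hilbert scale via the small-divisor bound $|\epsilon_n|>c\theta_n^{-h'}$ and the inclusion $\widetilde{H}^k\subset H^k_\phi(\R^2,\Hil_{h'-h''})$ from \Cref{contained-2}. The approximation of $S_u$ by the finite-rank $S_u^\ell$ from \Cref{fdlemma} then provides a quantitative handle on these passages, the factor $\ell^{2h-1}$ appearing there exactly compensating the loss in the Hilbert scale and keeping the energy estimate uniform across the entire infinite-dimensional tail.
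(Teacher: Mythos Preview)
Your energy-identity route contains a genuine error exactly where the small-divisor problem bites. You assert that nondegeneracy gives $\op{A\eta}\geq c\op{\eta}$ and deduce $\op{\widetilde\xi}_{L^2}\leq c^{-1}\op{D\widetilde\xi}_{L^2}$. This $L^2$-coercivity of $A=i\partial_t+S_u$ is precisely what \emph{fails}: on the basis vectors $e_{n,0}$ one has $i\partial_t e_{n,0}=\epsilon_n e_{n,0}$ with $\epsilon_n\to 0$, and since $S_u$ is compact (\Cref{Hklemma1} and the remark after it), Weyl's theorem forces $0$ into the essential spectrum of $A$ on $L^2_\phi(\R,\Hil_{h'})$. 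What \Cref{nondegenerate} actually provides is $\op{A\eta}_{H^{k-1}}\geq c\op{\eta}_{k,\sim}=c\op{i\partial_t\eta}_{H^{k-1}}$, a bound in the \emph{modified} norm, not in the standard one. Even if you replace your bound by this correct version, your ``differentiate and iterate'' upgrade from $L^2$ to $H^{k-1}$ is too vague: taking $t$-derivatives produces commutators $[\partial_t^\alpha,S_u(t)]$ that have to be absorbed, and your final paragraph only gestures at the role of \Cref{fdlemma} without showing how the factor $\ell^{2h-1}$ is actually deployed.

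The paper does not use an energy identity at all. It fixes a finite-dimensional splitting $\Hil_{h'}=\Hil^\ell\oplus\Hil^\ell_\perp$, writes $D$ as a $2\times 2$ block operator, and treats the two diagonal blocks separately. On the finite-dimensional block the modified and standard $H^k$-norms are equivalent, so the classical finite-dimensional statement \cite[Prop.~8.7.3]{AD} converts invertibility of $i\partial_t+S_\ell$ on loops into invertibility of $\bar\partial+S_\ell$ on cylinders. On the tail block $\Hil^\ell_\perp$ one uses directly that $\op{\cdot}_{k,\sim}=\op{\bar\partial\cdot}_{k-1}$. The off-diagonal and tail contributions of $S_u$ are then shown to be absorbable for $\ell$ large via the quantitative convergence $\op{S_u-S_u^\ell}=o(\ell^{-2h+1})$ from \Cref{fdlemma}; the exponent $2h-1>h''$ is what beats the $\ell^{h''}$ loss incurred when comparing norms on the finite block. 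This splitting argument sidesteps entirely the nonexistent $L^2$-coercivity that your approach relies on.
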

\begin{proof} 
Assume that the operator $i\partial_t+S_u:H^k(\R\times S^1,\R^{\dim M})\oplus\wt{H}^k_\phi(\R,\Hil_{h'})\to H^{k-1}(\R\times S^1,\R^{\dim M})\oplus H^{k-1}_\phi(\R,\Hil_{h'})$ is bounded with bounded inverse. Write $\Hil_{h'}=\Hil^\ell\oplus\Hil^\ell_\perp$ where $\Hil^\ell=\Span_{|n|\leq\ell}\{|z_n|_{h'}^{-1}z_n\}\subset\Hil_{h'}$ as above. Omitting the subscript $u$ for notational simplicity, let us write
\[
S(t)=\vierkanttwee{S_\ell(t)}{S_\ell^{\text{off}}(t)}{S_\ell^{\text{off}}(t)}{S_\ell^\perp(t)}: (\R^{\dim M}\oplus\Hil^\ell)\oplus\Hil^\ell_\perp\to (\R^{\dim M}\oplus\Hil^\ell)\oplus\Hil^\ell_\perp
\]
where the off-diagonal terms $S_\ell^{\text{off}}$ are equal because $S$ is symmetric. Similarly, we write $D=D_u$ as
\begin{eqnarray*}
D=\overline{\partial}+S=\vierkanttwee{\overline{\partial}+S_\ell}{S_\ell^{\text{off}}}{S_\ell^{\text{off}}}{\overline{\partial}+S_\ell^\perp}: &&\wt{H}^k_\phi(\R^2,\R^{\dim M}\oplus\Hil^\ell)\oplus \wt{H}^k_\phi(\R^2,\Hil^\ell_\perp)\\&&\to H^{k-1}_\phi(\R^2,\R^{\dim M}\oplus\Hil^\ell)\oplus H^{k-1}_\phi(\R^2,\Hil^\ell_\perp),
\end{eqnarray*}
where we use the abbreviation $\wt{H}^k_\phi(\R^2,\R^{\dim M}\oplus\Hil^\ell)=H^k(\R\times S^1,\R^{\dim M})\oplus\wt{H}^k_\phi(\R^2,\Hil^\ell)$. In order to prove the lemma, we want to show that $\op{\wt{\xi}}_{k,\sim}\leq c\op{D\wt{\xi}}_{k-1}$ for some constant $c$. Write $\wt{\xi}=\wt{\xi}_\ell+\wt{\xi}_\ell^\perp$ with $\wt{\xi}_\ell\in\wt{H}^k_\phi(\R^2,\R^{\dim M}\oplus\Hil^\ell)$ and $\wt{\xi}_\ell^\perp\in\wt{H}^k_\phi(\R^2,\Hil^\ell_\perp)$. Since on the finite-dimensional space $(\R^{\dim M}\oplus)\Hil^\ell$ the norms $\op{\cdot}_{k,\sim}$ and $\op{\cdot}_k$ are actually equivalent, we get using the definition of the norm $\op{\cdot}_{k,\sim}$ that
\[
\op{\wt{\xi}}_{k,\sim}&=\op{\wt{\xi}_\ell}_{k,\sim}+\op{\wt{\xi}_\ell^\perp}_{k,\sim}\\
&\leq c_1\op{\wt{\xi}_\ell}_{k}+\op{\overline{\partial}\wt{\xi}_\ell^\perp}_{k-1}.
\]
Since $i\partial_t+S$ is bounded with bounded inverse, we show below how one can use results from finite-dimensional Floer theory to deduce that
\begin{itemize}
    \item[$(*)$] for $\ell$ large enough $D_\ell=\CR+S_{\ell}:H^k_\phi(\R,\R^{\dim M}\oplus\Hil^\ell)\to H^{k-1}_\phi(\R,\R^{\dim M}\oplus\Hil^\ell)$ is an isomorphism.
\end{itemize}
Continuing the above estimates, we hence find that for $\ell$ large enough
\[
\op{\wt{\xi}}_{k,\sim}&\leq c_2 \pa{\op{D_\ell\wt{\xi}_\ell}_{k-1}+\op{\overline{\partial}\wt{\xi}_\ell^\perp}_{k-1}}
\]
Assuming for now that, for $\ell$ possibly larger, we have
\[
2\op{S_\ell^{\text{off}}\wt{\xi}_\ell}_{k-1}\stackrel{(**)}{\leq}\op{D_{\ell}\wt{\xi}_\ell}_{k-1}
\]
and
\[
2{\op{S_\ell^{\text{off}}\wt{\xi}_\ell^\perp}_{k-1}+2\op{S_\ell^\perp\wt{\xi}_\ell^\perp}_{k-1}}\stackrel{(***)}{\leq}\op{\CR\wt{\xi}_\ell^\perp}_{k-1}
\]
it then follows that
\begin{eqnarray*}
\op{\wt{\xi}}_{k,\sim}&\leq& 2c_2\pa{\op{D_{\ell}\wt{\xi}_\ell}_{k-1}-\op{S_\ell^{\text{off}}\wt{\xi}_\ell}_{k-1}}+2c_2\pa{\op{\CR\wt{\xi}_\ell^\perp}_{k-1}-\op{S_\ell^{\text{off}}\wt{\xi}_\ell^\perp}_{k-1}-\op{S_\ell^\perp\wt{\xi}_\ell^\perp}_{k-1}}\\
&=& 2c_2\pa{\op{D_{\ell}\wt{\xi}_\ell}_{k-1}-\op{S_\ell^{\text{off}}\wt{\xi}_\ell^\perp}_{k-1}}+2c_2\pa{\op{\CR\wt{\xi}_\ell^\perp}_{k-1}-\op{S_\ell^{\text{off}}\wt{\xi}_\ell}_{k-1}-\op{S_\ell^\perp\wt{\xi}_\ell^\perp}_{k-1}}\\
&\leq& 2c_2\pa{\op{\left(D\wt{\xi}\right)_\ell}_{k-1}+\op{\left(D\wt{\xi}\right)_\ell^\perp}_{k-1}}=2c_2\op{D\wt{\xi}}_{k-1}
\end{eqnarray*}
which proves the lemma.\\\par
Let us now prove that $(*)$, $(**)$ and $(***)$ hold for $\ell$ large enough. First note that since the orbit $u$ is nondegenerate up to small divisors, we have that \[\op{\xi}_{k,\sim}<c_2\op{(i\partial_t+S)\xi}_{k-1}\,\,\text{for some}\,\,c_2>0.\] Since $S^\ell\to S$ in the operator norm for operators $\wt{H}^k_\phi(\R,\R^{\dim M}\oplus\Hil_{h'})\to H^{k-1}_\phi(\R,\R^{\dim M}\oplus\Hil_{h'})$ by \Cref{findimest}, it follows that, for $\ell$ large enough, \[\op{\xi}_{k,\sim}< c_2\op{(i\partial_t+S_\ell)\xi}_{k-1},\,\,\text{and hence}\,\,\op{\xi_\ell}_{k,\sim}< c_2\op{(i\partial_t+S_\ell)\xi_\ell}_{k-1}.\] Using that the norms $\op{\cdot}_{k,\sim}$ and $\op{\cdot}_k$ are equivalent on $\Hil^\ell$, $(*)$ follows using \cite[Prop. 8.7.3]{AD}.\par In order to prove $(**)$ we use that the above argument can even be strenghtened as follows: It follows from \Cref{contained-1} that for every $h_0<h''<h$ there is some $c_3>0$ such that  \[\op{\xi_\ell}_{k,\sim}\geq c_3\ell^{-h''}\op{\xi_\ell}_k,\,\,\text{and hence}\,\,\op{\xi_\ell}_{k}\leq \ell^{h''}c_4\op{(i\partial_t+S_\ell)\xi_{\ell}}_{k-1}\] with $c_4=c_2c_3>0$. But now we can use \cite[Prop. 8.7.3]{AD} to deduce that
\[
\op{\wt{\xi}_\ell}_k\leq \ell^{h''}c_5\op{D_\ell\wt{\xi}_\ell}_{k-1}\,\,\textrm{for some $c_5>0$ independent of $\ell$.}
\]
Since $\op{S-S_\ell}=o(\ell^{-2h+1})$ as $\ell\to\infty$ by \Cref{findimest} when taking the norm for operators on $H^k_\phi(\R^2,\R^{\dim M}\oplus\Hil_{h'})$ and $2h-1>h>h'$ as $h>h_0\geq 2$, we conclude that one can indeed choose $\ell$ large enough such that $2\op{S_\ell^{\text{off}}\wt{\xi}_\ell}_{k-1}\leq\op{D_\ell\wt{\xi}_\ell}_{k-1}$, that is, we prove that the inequality $(**)$ holds. \par
To prove $(***)$, we again use \Cref{findimest} to conclude that for $\ell$ large enough, we have \[2\op{(S-S_{\ell})\wt{\xi}}_{k-1}\leq\op{\wt{\xi}}_{k,\sim}=\op{\overline{\partial}\wt{\xi}}_{k-1}\] and hence 
\[2\op{S\wt{\xi}_\ell^\perp}_{k-1}=2\op{(S-S_{\ell})\wt{\xi}_\ell^\perp}_{k-1}\leq\op{\overline{\partial}\wt{\xi}_\ell^\perp}_{k-1}\]
which implies $(***)$.

\end{proof}


With this we can give the proof of the Fredholm property. 

\begin{proof} \emph{(of \Cref{Fredholm})} 
Again we make use of the splitting $\Hil_{h'}=\Hil^\ell\oplus\Hil^\ell_\perp$ with $\Hil^\ell=\Span_{|n|\leq\ell}\{|z_n|_{h'}^{-1}z_n\}\subset\Hil_{h'}$ and write every $\wt{\xi}$ as a sum  
$\wt{\xi}=\wt{\xi}_\ell+\wt{\xi}_\ell^\perp$ with $\wt{\xi}_\ell\in\wt{H}^k_\phi(\R^2,\R^{\dim M}\oplus\Hil^\ell)=H^k(\R\times S^1,\R^{\dim M})\oplus\wt{H}^k_\phi(\R^2,\Hil^\ell)$ and $\wt{\xi}_\ell^\perp\in\wt{H}^k_\phi(\R^2,\Hil^\ell_\perp)$. Define $S=S_{\wt{u}}$ and $D=D_{\wt{u}}=\overline{\partial}+S$ as well as the asymptotic operators $S^\pm=S_{\wt{u}}^\pm=S_{u^{\pm}}$ and $D^\pm=D_{\wt{u}}^\pm=\overline{\partial}+S^\pm$. First use the proof of \Cref{injectivityestimate} to fix $\ell\in\N$ such that $(*)$ and $(**)$ holds for $D^+$ and for $D^-$, that is, $D^{\pm}_\ell=\CR+S^{\pm}_{\ell}:H^k_\phi(\R,\R^{\dim M}\oplus\Hil^\ell)\to H^{k-1}_\phi(\R,\R^{\dim M}\oplus\Hil^\ell)$ is an isomorphism and   
\[
2\op{S_\ell^{\pm,\text{off}}\wt{\xi}_\ell}_{k-1}\leq\op{D^\pm_{\ell}\wt{\xi}_\ell}_{k-1}.\]
On the other hand, by possibly choosing $\ell$ larger, it again directly follows from \Cref{findimest} that $(***)$ holds true for $D=\CR+S$,
\[2{\op{S_\ell^{\text{off}}\wt{\xi}_\ell^\perp}_{k-1}+2\op{S_\ell^{\perp}\wt{\xi}_\ell^\perp}_{k-1}}\leq\op{\CR\wt{\xi}_\ell^\perp}_{k-1}.
\]
For every $S_0>0$ define a cut-off function $\beta=\beta_{S_0}:\R\to[0,1]$ which equals one on $[-S_0+1,S_0-1]$ and zero outside $[-S_0,S_0]$. Since $S_{u}^\pm=\lim_{s\to\pm\infty}S_{\wt{u}}(s,\cdot)$, it follows that for sufficiently large $S_0>0$ we can employ $(*)$ and $(**)$ to deduce that there exists $c_1>0$ with 
\[\op{(1-\beta)\wt{\xi}_\ell}_{k,\sim}\leq c_1\op{D_\ell(1-\beta)\wt{\xi}_\ell}_{k-1}\leq 2c_1\left(\op{D_\ell(1-\beta)\wt{\xi}_\ell}_{k-1}-\op{S_\ell^{\text{off}}(1-\beta)\wt{\xi}_\ell}_{k-1}\right).\] Together with 
\begin{eqnarray*}
\op{\beta\wt{\xi}_\ell}_{k,\sim}=\op{\CR\beta\wt{\xi}_\ell}_{k-1}&\leq&\op{D_\ell\beta\wt{\xi}_\ell}_{k-1}+\op{S_\ell\beta\wt{\xi}_\ell}_{k-1}+2\op{S_\ell^{\text{off}}\beta\wt{\xi}_\ell}_{k-1}-2\op{S_\ell^{\text{off}}\beta\wt{\xi}_\ell}_{k-1}\\&\leq& 2\op{D_\ell\beta\wt{\xi}_\ell}_{k-1}+c_2\op{\beta\wt{\xi}_\ell}_{k-1} -2\op{S_\ell^{\text{off}}\beta\wt{\xi}_\ell}_{k-1},
\end{eqnarray*}
we get as in finite-dimensional Floer theory that there exists $c_3>0$ with 
\begin{eqnarray*}
\op{\wt{\xi}_\ell}_{k,\sim}&\leq& \op{\beta\wt{\xi}_\ell}_{k,\sim}+\op{(1-\beta)\wt{\xi}_\ell}_{k,\sim}\\&\leq& c_3\left(\op{D_\ell\wt{\xi}_\ell}_{k-1}+\op{\beta\wt{\xi}_\ell}_{k-1} -\op{S_\ell^{\text{off}}\wt{\xi}_\ell}_{k-1}\right).
\end{eqnarray*}
Together with $(***)$, summarizing we find similar as in the proof of \Cref{injectivityestimate} that there exists $c_4>0$ with
\begin{eqnarray*}
\op{\wt{\xi}}_{k,\sim}&\leq& c_4\pa{\op{D_{\ell}\wt{\xi}_\ell}_{k-1}+\op{\beta\wt{\xi}_\ell}_{k-1}-\op{S_\ell^{\text{off}}\wt{\xi}_\ell}_{k-1}}+c_4\pa{\op{\CR\wt{\xi}_\ell^\perp}_{k-1}-\op{S_\ell^{\text{off}}\wt{\xi}_\ell^\perp}_{k-1}-\op{S_\ell^\perp\wt{\xi}_\ell^\perp}_{k-1}}\\
&\leq& c_4\pa{\op{D_{\ell}\wt{\xi}_\ell}_{k-1}+\op{\beta\wt{\xi}_\ell}_{k-1}-\op{S_\ell^{\text{off}}\wt{\xi}_\ell^\perp}_{k-1}}+c_4\pa{\op{\CR\wt{\xi}_\ell^\perp}_{k-1}-\op{S_\ell^{\text{off}}\wt{\xi}_\ell}_{k-1}-\op{S_\ell^\perp\wt{\xi}_\ell^\perp}_{k-1}}\\
&\leq& c_4\pa{\op{\left(D\wt{\xi}\right)_\ell}_{k-1}+\op{\beta\wt{\xi}_\ell}_{k-1}+\op{\left(D\wt{\xi}\right)_\ell^\perp}_{k-1}}
=c_4\pa{\op{D\wt{\xi}}_{k-1}+\op{\beta\wt{\xi}_\ell}_{k-1}}
\end{eqnarray*}
as desired. Now \cite[proposition 8.7.4]{AD} shows that $D_{\wt{u}}$ is semi-Fredholm, meaning it has a finite-dimensional kernel and a closed range. \par
To show that $D_{\wt{u}}$ also has a finite-dimensional cokernel, the same line of proof as in \cite[section 8.7.c]{AD} or \cite[section 2.3]{Sa} holds: Since the adjoint operator $D_{\wt{u}}^*$ of $D_{\wt{u}}$ is given by 
\[
D_{\wt{u}}^*=\overline{\partial}-S_{\wt{u}}^*:\wt{H}^k_{\phi}(\R^2,\R^{\dim M}\oplus\Hil_{h'})\to H^{k-1}_\phi(\R^2,\R^{\dim M}\oplus\Hil_{h'}),
\]
it immediately follows that $D_{\wt{u}}^*$ is also semi-Fredholm. On the other hand, since the kernel and cokernel of $D_{\wt{u}}$ and $D_{\wt{u}}^*$ are independent of $1< k\leq\floor{h/d}$ and $h_0 < h'<h$ by \Cref{linear-regularity}, it follows that the cokernel of $D_{\wt{u}}:\wt{H}^k_{\phi}(\R^2,\R^{\dim M}\oplus\Hil_{h'})\to H^{k-1}_\phi(\R^2,\R^{\dim M}\oplus\Hil_{h'})$ agrees with the kernel of $D_{\wt{u}}^*:\wt{H}^k_{\phi}(\R^2,\R^{\dim M}\oplus\Hil_{h'})\to H^{k-1}_\phi(\R^2,\R^{\dim M}\oplus\Hil_{h'})$. \end{proof}

\section{Appendix: Nondegeneracy up to small divisors is generic}
\label{transversalitysection}
Consider the set $\mathcal{G}_h:=C^{\floor{h/d}}_{\phi}(\R,C^{\floor{h/d}+2}(M\times\Hil_{-h},\R))$ of $h$-regularizing Hamiltonians $G_t=F_t\circ\phi^A_{-t}$ as in \Cref{admissible-2}, that is, $G_t\in C^{\floor{h/d}+2}(M\times\Hil_{-h},\R)$ for every $t\in\R$ with $G_{t+T}=G_t\circ\phi^A_{-T}$ and the map $t\mapsto G_t$ is $\floor{h/d}$-times continuously differentiable. 
In this section we show that, for a generic choice of $G_t\in \mathcal{G}_h$, for each $u\in\P(\phi^A_T,G)$ the linearization $L_u$ of $u\mapsto J(u)\partial_t u+\nabla G_t(u)$ at $u$ is surjective. This implies that for generic choice of $G\in\mathcal{G}_h$ the orbit space $\P(\phi^A_T,G)$ is a manifold of dimension $0$ and all orbits are non-degenerate up to small divisors. 
The outline of the proof is similar to standard proofs of transversality in Floer theory, see e.g. \cite{SZ}. Namely, we consider the universal orbit space $\wt{\P}(\phi^A_T)$ which is the union of all orbits spaces for all choices of $G\in\mathcal{G}_h$. We show that the linearization of the map $({u},G)\mapsto{i\partial_t}u+\nabla G_t({u})$ is surjective to conclude that the universal orbit space is transversally cut out and then use Sard-Smale to show that the set of regular values of the projection map from the universal orbit space onto the space of nonlinearities is dense. \par 

After this quick summary, let us list the key results of this section. 
\begin{theorem}
\label{transversalitythm}
The subset of nonlinearity Hamiltonians $G\in\mathcal{G}_h=C^{\floor{h/d}}_{\phi}(\R,C^{\floor{h/d}+2}(M\times\Hil_{-h},\R))$ for which the linearized operator 
\[
L_u:\wt{H}_\phi^k(\R,\R^{\dim M}\oplus\Hil_{h'})\to H^{k-1}_\phi(\R,\R^{\dim M}\oplus\Hil_{h'})
\]
is surjective for all ${u}\in\P(\phi^A_T,G)$, is of the second Baire category, that is, a countable intersection of open and dense subsets.
\end{theorem}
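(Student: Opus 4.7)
The plan is to apply the standard universal-moduli-space Sard--Smale strategy, adapted to the modified Hilbert scale $\wt{H}^k_\phi$ and the small divisor problem. I would first introduce the universal orbit space
\[
\wt{\P}(\phi^A_T) := \bigl\{(u,G)\in\B\times\mathcal{G}_h : J(u)\partial_t u + \nabla G_t(u)=0\bigr\},
\]
where $\B = H^k(S^1,M)\times(\{u^*_\Hil\}+\wt{H}^k_\phi(\R,\Hil_{h'}))$, realized as the zero set of the section $\mathcal{F}(u,G) := J(u)\partial_t u + \nabla G_t(u)$ of the Hilbert bundle $\E\to\B\times\mathcal{G}_h$ with fiber $H^{k-1}(u_M^*TM)\oplus H^{k-1}_\phi(\R,\Hil_{h'})$. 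After choosing a unitary trivialization of $u_M^*TM$, the full linearization at $(u,G)\in\wt{\P}(\phi^A_T)$ reads
\[
D\mathcal{F}_{(u,G)}(\xi,\eta) = L_u\xi + \nabla\eta_t(u),
\]
where $L_u = i\partial_t+S_u$ is the bounded Fredholm operator of index zero already analyzed in \Cref{nondegenerate} (the $s$-independent analogue of \Cref{Fredholm}); note $\eta$ enters linearly, so smoothness of $D\mathcal{F}$ in $\eta$ is automatic, while smoothness in $u$ follows from $1<k\leq\floor{h/d}$ and \cite[Prop.\ B.1.19]{MDSa}.

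The core analytic step is to show that $D\mathcal{F}_{(u,G)}$ is surjective at every $(u,G)\in\wt{\P}(\phi^A_T)$. Since $L_u$ has closed range with finite-dimensional cokernel, this reduces by duality to a unique continuation assertion: any $\zeta$ in the $L^2$-adjoint kernel of $L_u$ satisfying
\[
\int_0^T \langle \nabla\eta_t(u(t)),\zeta(t)\rangle\,dt = 0 \quad\text{for every admissible }\eta\in\mathcal{G}_h
\]
must vanish. By \Cref{linear-regularity-1}, such $\zeta$ automatically lies in $\wt{H}^{\floor{h/d}+1}_\phi(\R,\Hil_h)$, hence is continuous on the circle. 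Assume $\zeta(t_0)\neq 0$; then for $\ell$ large enough the finite-dimensional projection $\zeta^\ell(t_0)$ onto $\R^{\dim M}\oplus \Hil^\ell$ is nonzero. Any smooth function $\eta_t$ on $M\times\Hil$ that factors through $\R^{\dim M}\oplus\Hil^\ell$ via the bounded Fourier-coefficient projection is $h$-regularizing for every $h$, because its gradient has finite Fourier support and hence automatically lies in $\Hil_{+h}$. Taking $\eta_t$ equal to a sharp bump in $t$ centered at $t_0$ times a linear functional on $M\times\Hil^\ell$ whose dual direction equals $\zeta^\ell(t_0)$ gives an admissible variation violating the pairing, the desired contradiction.

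With universal surjectivity established, the implicit function theorem endows $\wt{\P}(\phi^A_T)$ with the structure of a Banach submanifold, and the projection $\pi:\wt{\P}(\phi^A_T)\to\mathcal{G}_h$, $(u,G)\mapsto G$, is a Fredholm map of index $\ind L_u = 0$. The Sard--Smale theorem (applicable because $\pi$ is at least $C^1$ for $k\leq\floor{h/d}$ and $\ind\pi = 0$) then yields that the set of regular values of $\pi$ is a countable intersection of open dense subsets of $\mathcal{G}_h$. By definition, $G$ is a regular value if and only if $L_u$ is surjective for every $u\in\pi^{-1}(G)=\P(\phi^A_T,G)$, which is the statement of the theorem (and, combined with $\ind L_u=0$, equivalent to nondegeneracy up to small divisors of every orbit).

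The main obstacle I anticipate is the unique continuation/density step: in the classical finite-dimensional case one simply uses arbitrary bump functions on $M$, but here potential cokernel elements $\zeta$ can live in an infinite-dimensional tail, while admissible variations $\eta$ are constrained to be $h$-regularizing, so their gradients take values only in the proper subspace $\Hil_{+h}\subset\Hil_{h'}$. The finite-dimensional truncation by $\Hil^\ell$ is the key device: variations factoring through this finite-dimensional image are automatically $h$-regularizing for every $h$, and the enhanced regularity of $\zeta$ supplied by \Cref{linear-regularity-1} ensures that the projections $\zeta^\ell$ eventually detect any non-vanishing of $\zeta$.
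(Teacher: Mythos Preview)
Your proposal is correct and follows essentially the same route as the paper: set up the universal orbit space, prove surjectivity of the full linearization $(\xi,K)\mapsto L_u\xi+\nabla K_t(u)$ by a duality argument using the regularity of cokernel elements supplied by \Cref{linear-regularity-1}, and then apply Sard--Smale to the projection onto $\mathcal{G}_h$. The only (cosmetic) difference is in the construction of the test variation detecting $\zeta(t_0)\neq 0$: the paper appeals abstractly to the density of $\Hil_{+h}$ in $\Hil$ (``the dense subspace $\Hil_{+h}$ has no complement in $\Hil$''), whereas you make this explicit via finite-dimensional Fourier truncation onto $\Hil^\ell$ --- an equally valid and somewhat more concrete way to produce an $h$-regularizing variation with the required nonzero pairing.
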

Note that here we again use the abbreviation $\wt{H}^k_\phi(\R,\R^{\dim M}\oplus\Hil_{h'})=H^k(S^1,\R^{\dim M})\oplus\wt{H}^k_\phi(\R,\Hil_{h'})$. By applying the Implicit Function Theorem, we get the following important 

\begin{corollary}
For a generic choice of $G\in\mathcal{G}_h$ all $\phi^A_T$-periodic orbits of $G_t$ are nondegenerate up to small divisors. Furthermore the orbit set $\P(\phi^A_T,G)=\pa{i\partial_t+\nabla {G}}^{-1}(0)$ is a smooth manifold of dimension $0$, that is, a discrete set of points in $H^k(S^1,M)\times\wt{H}^k_\phi(\R,\Hil_{h'})$.
\end{corollary}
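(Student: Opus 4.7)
The plan is to derive both conclusions directly from \Cref{transversalitythm} together with the Fredholm theory developed earlier in \Cref{Fredholmsection}. The work splits cleanly into two steps: first, translate surjectivity of $L_u$ into the two-sided invertibility required by \Cref{nondegenerate}; second, deduce the manifold structure on $\P(\phi^A_T,G)$ by applying the implicit function theorem for Hilbert manifolds to the section $u\mapsto i\partial_t u+\nabla G_t(u)$.

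For the first step, fix a nonlinearity $G$ in the comeager set produced by \Cref{transversalitythm} and let $u\in\P(\phi^A_T,G)$. Then by hypothesis the linearization $L_u=i\partial_t+S_u$ as a map $\wt{H}^k_\phi(\R,\R^{\dim M}\oplus\Hil_{h'})\to H^{k-1}_\phi(\R,\R^{\dim M}\oplus\Hil_{h'})$ is surjective. Now $i\partial_t$ is a bounded isomorphism between these two spaces by definition of the modified norm $\op{\cdot}_{k,\sim}$, and $S_u$ is a compact operator between them by the compactness lemma \Cref{compactembedding1} combined with the regularity and boundedness result \Cref{Hklemma1}; hence $L_u$ is a compact perturbation of an invertible operator and therefore Fredholm of index zero. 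A Fredholm operator of index zero that is surjective is automatically injective, and by the open mapping theorem the algebraic inverse is continuous. This is exactly the nondegeneracy up to small divisors condition of \Cref{nondegenerate}.

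For the second step, view the Hamilton operator $\mathcal{F}(u)=i\partial_t u+\nabla G_t(u)$ as a $C^1$-section of the Hilbert space bundle over $H^k(S^1,M)\times\wt{H}^k_\phi(\R,\Hil_{h'})$ with fibre $H^{k-1}(u_M^*TM)\oplus H^{k-1}_\phi(\R,\Hil_{h'})$, as set up right before \Cref{Hklemma1}. The $C^1$-regularity of $\mathcal{F}$ follows from \Cref{gradient+Hessian} together with the composition and multiplication estimates of \cite[Prop.~B.1.19]{MDSa}, using $1<k\leq\floor{h/d}$ as in the rest of the paper. At every $u\in\mathcal{F}^{-1}(0)=\P(\phi^A_T,G)$ the vertical derivative of $\mathcal{F}$ is precisely $L_u$, which by the previous step is a bijection of Hilbert spaces with bounded inverse. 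The implicit function theorem for $C^1$-sections of Hilbert space bundles therefore yields an open neighbourhood of $u$ in which $\mathcal{F}^{-1}(0)$ is a smooth submanifold of dimension $\dim\ker L_u=0$, i.e.\ the point $u$ is isolated in $\P(\phi^A_T,G)$.

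There is no genuinely hard step once \Cref{transversalitythm} is in hand; the only point that requires a bit of care is the verification that $L_u$ is Fredholm of index zero on the modified Sobolev spaces, since this relies on the small-divisor-adapted completions introduced in \Cref{Fredholmsection}. All of this is, however, already in place by the lemmas cited above, so the corollary reduces to two short applications of standard Hilbert-space functional analysis.
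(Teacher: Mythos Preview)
Your proposal is correct and follows exactly the route the paper indicates: the paper states the corollary as a direct consequence of \Cref{transversalitythm} ``by applying the Implicit Function Theorem'' and gives no further details, and you have filled in precisely those details, using the index-zero Fredholm property of $L_u=i\partial_t+S_u$ (already recorded in the proposition following \Cref{nondegenerate}) to upgrade surjectivity to two-sided invertibility, and then invoking the implicit function theorem for sections of Hilbert bundles.
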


Let us define the universal orbit space 
\[
\wt{\P}^{\floor{h/d}}(\phi^A_T)=\bigcup_{G\in \mathcal{G}_h}\wt{\P}(\phi^A_T,G)\subset H^k(S^1,M)\times\wt{H}^k_\phi(\R,\Hil_{h'})\times \mathcal{G}_h
\]
whose elements are pairs $({u},G)$ of orbits ${u}$ for choice of nonlinearity $G$. The proof of \Cref{transversalitythm} mostly relies on the proof of the following lemma. 
\begin{lemma}
\label{lemma3}
The linearization of the map $({u},G)\mapsto {i\partial_t}u+\nabla{G}_t({u})$ at the point $({u},G)\in\wt{\P}^{\floor{h/d}}(\phi^A_T)$, given by
\begin{eqnarray*}
L_{{u},G}:&&\wt{H}^k_\phi(\R,\R^{\dim M}\oplus\Hil_{h'})\oplus C^{\floor{h/d}}_{\phi}(\R,C^{\floor{h/d}+2}(M\times\Hil_{-h},\R))\to H^{k-1}_\phi(\R,\R^{\dim M}\oplus\Hil_{h'}),\\&&(\xi,K)\mapsto i\partial_t\xi+S_u\xi+\nabla K_t({u})
\end{eqnarray*}
is surjective for all $({u},G)\in\wt{\P}^{\floor{h/d}}(\phi^A_T)$. 
\end{lemma}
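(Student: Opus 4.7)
The plan is to prove surjectivity of $L_{u,G}$ via the standard strategy of showing that its image is both closed and dense, adapted to the present infinite-dimensional scale-space setting. For closedness, I would observe that the restriction of $L_{u,G}$ to the $\xi$-variable is precisely the time-only analogue of the linearized Floer operator $D_u = i\partial_t + S_u$ studied in \Cref{Fredholmsection}, and it is Fredholm by exactly the same argument as \Cref{Fredholm} (in fact this case is simpler, since one only has to run the injectivity estimate of \Cref{injectivityestimate} and its adjoint version on $\R$ rather than $\R^2$, without any gluing region). Hence the image of $D_u$ in $H^{k-1}_\phi(\R,\R^{\dim M}\oplus\Hil_{h'})$ has finite codimension, and since the image of $L_{u,G}$ contains that of $D_u$, the image of $L_{u,G}$ is automatically closed.

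For density, I would argue by duality. Suppose $\eta\in H^{k-1}_\phi(\R,\R^{\dim M}\oplus\Hil_{h'})$ annihilates the image of $L_{u,G}$ with respect to the natural $L^2$-pairing induced by the inner product on $\R^{\dim M}\oplus\Hil_{h'}$. Testing against all pairs $(\xi,0)$ gives $D_u^*\eta=0$ weakly, where $D_u^*=-i\partial_t+S_u^*$; this is a first-order linear ODE in $t$ with values in the Hilbert space $\R^{\dim M}\oplus\Hil_{h'}$, so by a bootstrap in the spirit of \Cref{linear-regularity-1} the function $\eta$ is continuous, and by uniqueness for such linear ODEs $\eta$ is either identically zero or nowhere zero. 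Testing against all pairs $(0,K)$ with $K\in\mathcal{G}_h$ gives $\int_0^T\langle\eta(t),\nabla K_t(u(t))\rangle\,dt=0$, which I will use to rule out the nowhere-zero case.

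To derive the contradiction, suppose $\eta(t_0)\neq 0$ for some (equivalently, every) $t_0$. I would pass to the honestly $T$-periodic orbit $v(t)=\phi^A_tu(t)$ of the original $F$-Hamilton equation, so as to work with genuinely $T$-periodic test data, and exploit density of $\Hil_{+h}$ in $\Hil_{h'}$ to pick $\widetilde{\eta}_0\in T_{v(t_0)}M\oplus\Hil_{+h}$ close to $\phi^A_{t_0}\eta(t_0)$. I would then construct $F_t\in C^\infty_\phi(\R,C^\infty(M\times\Hil_{-h},\R))$ of the shape $F_t(x)=\beta(t)\,\chi(x)\,\langle\widetilde{\eta}_0,x-v(t_0)\rangle$ in a chart around $v(t_0)$, with $\beta$ a narrow $T$-periodic bump supported near $t_0$ modulo $T\Z$ and $\chi$ a spatial cutoff near $v(t_0)$. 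The corresponding $K_t=F_t\circ\phi^A_{-t}$ lies in $\mathcal{G}_h$, and by construction $\nabla K_t(u(t))\approx\beta(t)\,\phi^A_{-t}\widetilde{\eta}_0$ along the orbit; shrinking the support so that $\eta$ stays close to $\eta(t_0)$ on it, a direct estimate yields $\int\langle\eta,\nabla K_t(u)\rangle\,dt\geq c|\eta(t_0)|_{h'}^2\int\beta(t)\,dt>0$, contradicting annihilation.

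The main obstacle will be producing a test Hamiltonian $K$ that simultaneously respects the $C^{\floor{h/d}}$-in-time and $C^{\floor{h/d}+2}$-in-space regularity required by $\mathcal{G}_h$, the twisted periodicity $K_{t+T}=K_t\circ\phi^A_{-T}$, and the condition that $\nabla K_t(u)$ actually lies in $\Hil_{+h}\subset\Hil_{h'}$ so as to pair correctly with $\eta$. Reducing to the $T$-periodic orbit $v$ and to the original $F$ removes the twisted periodicity, and density of $\Hil_{+h}$ in $\Hil_{h'}$ resolves the dual-space mismatch; the regularity requirements are then manageable because $\beta$, $\chi$, and the linear factor are all smooth. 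Once this explicit construction is in place, the remainder of the argument, as well as the deduction of \Cref{transversalitythm} from \Cref{lemma3} via the Sard--Smale theorem, follows the classical transversality scheme of \cite{SZ} or \cite[Section 8.7]{AD}.
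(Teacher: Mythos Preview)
Your approach is correct and essentially the same as the paper's: both argue by duality, use the adjoint equation to bootstrap $\eta$ to continuity, and then construct a bump-type test Hamiltonian (exploiting density of $\Hil_{+h}$ in the ambient Hilbert space) to force $\eta\equiv 0$. One minor slip: since $i\partial_t$ (with $i=J$) is self-adjoint on the twisted-periodic $L^2$-space and $S_u$ is symmetric, the correct formal adjoint is $D_u^*=i\partial_t+S_u=D_u$ rather than $-i\partial_t+S_u^*$; this does not affect your argument, and your explicit treatment of closedness via the Fredholm property of $D_u$ is a welcome detail the paper leaves implicit.
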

\begin{proof}
The proof is a small adaption of the original proof for finite-dimensional symplectic manifolds from \cite{SZ}. Fix some $\eta\in H^{k-1}_\phi(\R,\R^{\dim M}\oplus\Hil_{h'})$ and assume 
\begin{align}
\label{assumption1}
\langle L_{{u},G}(\xi,K),\eta\rangle=0\quad \text{for all\;}(\xi,K)\in \wt{H}^k_\phi(\R,\R^{\dim M}\oplus\Hil_{h'})\oplus C^{\floor{h/d}}_{\phi}(\R,C^{\floor{h/d}+2}(M\times\Hil_{-h},\R)).
\end{align}
We show that this implies $\eta\equiv0$ which in turn shows that $L_{{u},G}$ is surjective.\par
Note that \eqref{assumption1} implies that both $\langle L_u\xi,\eta\rangle=0$ for all $\xi\in\wt{H}^k_\phi(\R,\R^{\dim M}\oplus\Hil_{h'})$ and $\langle \nabla K_t({u}),\eta\rangle=0$ for all $K\in \mathcal{G}_h$. Since $L_u$ is self-adjoint, it follows that $\eta\in\ker L_u$ so that we have $\eta\in \wt{H}^{\floor{h/d}+1}_\phi(\R,\R^{\dim M}\oplus\Hil_h)\subset H^{\floor{h/d}+1}_\phi(\R,\R^{\dim M}\oplus\Hil)\subset C^{\floor{h/d}}_\phi(\R,\R^{\dim M}\oplus\Hil)$. The second equality $\langle\nabla K_t({u}),\eta\rangle=0$ for all $K$ now implies that $\eta\equiv0$: 
Indeed, assume that there were some $t_0\in\R$ such that $\eta(t_0)\neq 0$, where we assume without loss of generality that $\eta(t_0)>0$. Note first that we can clearly find $K\in C^{\floor{h/d}}_{\phi}(\R,C^{\floor{h/d}+2}(M\times\Hil_{-h},\R))$ such that $\nabla K_{t_0}({u}(t_0))\cdot\eta(t_0)>0$. Note however that, while $\eta(t_0)\in\Hil$, we necessarily have $\nabla K_{t_0}(u(t_0))\in\Hil_{+h}$. It follows that in our infinite dimensional situation \emph{we now additionally need to use that the dense subspace $\Hil_{+h}$ has no complement in $\Hil$.} Since $\eta$ is $\floor{h/d}$-times continuously differentiable, there is a neighborhood of $t_0$ where $\eta$ is positive. Using a smooth cut-off function with support in the image of this neighborhood under the map $u$, we see that we can find $K\in C^{\floor{h/d}}_{\phi}(\R,C^{\floor{h/d}+2}(M\times\Hil_{-h},\R))$ such that $\langle \nabla K_t({u}),\eta\rangle >0$, so that $\eta\equiv 0$ by proof by contradiction.  \end{proof}

 \begin{proof}[Proof of \Cref{transversalitythm}]
Using the Sard-Smale theorem from \cite{Sm} it follows that the set of regular value of the projection map $\pi:\wt{\P}^{\floor{h/d}}(\phi^A_T)\to C^{\floor{h/d}}_{\phi}(\R,C^{\floor{h/d}+2}(M\times\Hil_{-h},\R))$ is of the second category in the sense of Baire. When $G$ is a regular value of the projection map $\pi$, then it is immediate to see that $L_{{u}}$ is surjective for all ${u}\in\P(\phi^A_T,G)$.
\end{proof}



\end{document}